\let\expandafter\oldproof\csname\string\proof\endcsname
\let\oldendproof\endproof
\renewenvironment{proof}[1][\proofname]{\oldproof[\ttfamily\scshape\bf #1.]
}{\oldendproof}
\def\ve{\varepsilon}
\def\tilde{\widetilde}
\def\emp{\emptyset}
\def\dist {{\rm dist}}
\def\dom{{\rm dom}\,}
\def\epi{{\rm epi\,}}
\def\N{{\cal N}}
\def\O{{\cal O}}
\def\Q{{\cal Q}}
\def\d{{\rm d}}
\def\sub{\partial}
\def\B{\mathbb B}
\def\ox{\overline{x}}
\def\oy{\overline{y}}
\def\oz{\overline{z}}
\def\olm{\overline{\lambda}}
\def\cl{{\rm cl}\,}
\def\disp{\displaystyle}
\def\Limsup{\mathop{{\rm Lim}\,{\rm sup}}}
\def\tto{\rightrightarrows}
\def\Hat{\widehat}
\def\Bar{\overline}
\def\ra{\rangle}
\def\la{\langle}
\def\ve{\varepsilon}
\def\B{I\!\!B}
\def\R{{\rm I\!R}}
\def\N{{\rm I\!N}}
\def\ox{\bar{x}}
\def\oy{\bar{y}}
\def\oz{\bar{z}}
\def\ov{\bar{v}}
\def\ow{\bar{w}}
\def\ou{\bar{u}}
\def\op{\bar{p}}
\def\co{\mbox{\rm co}\,}
\def\inte{\mbox{\rm int}\,}
\def\gph{\mbox{\rm gph}\,}
\def\epi{\mbox{\rm epi}\,}
\def\dom{\mbox{\rm dom}\,}
\def\ker{\mbox{\rm ker}\,}
\def\cl{\mbox{\rm cl}\,}
\def\d{{\mathrm d}}
\def\dn{\downarrow}
\def\O{\Omega}
\def\ph{\varphi}
\def\emp{\emptyset}
\def\st{\stackrel}
\def\oR{\Bar{\R}}
\def\lm{\lambda}
\def\dd{\delta}
\def\al{\alpha}
\def\Th{\Theta}
\def\th{\theta}
\def\vt{\vartheta}
\def\ss{\scriptsize }
\def\bd{\mbox{\rm bd}\,}
\def\sm{\hbox{${1\over 2}$}}
\def\sce{\setcounter{equation}{0}}
\begin{document}
\newtheorem{Theorem}{Theorem}[section]
\newtheorem{Proposition}[Theorem]{Proposition}
\newtheorem{Remark}[Theorem]{Remark}
\newtheorem{Lemma}[Theorem]{Lemma}
\newtheorem{Corollary}[Theorem]{Corollary}
\newtheorem{Definition}[Theorem]{Definition}
\newtheorem{Example}[Theorem]{Example}
\renewcommand{\theequation}{{\thesection}.\arabic{equation}}
\renewcommand{\thefootnote}{\fnsymbol{footnote}}
\begin{center}
{\bf\Large Parabolic Regularity in Geometric Variational Analysis}
\\[3ex]
ASHKAN MOHAMMADI\footnote{Department of Mathematics, Wayne State University, Detroit, MI 48202 (ashkan.mohammadi@wayne.edu). Research of this author was partly supported by the National Science Foundation under grant DMS-1808978 and by the US Air Force Office of Scientific Research under grant \#15RT0462.}
BORIS S. MORDUKHOVICH\footnote{Department of Mathematics, Wayne State University, Detroit, MI 48202 (boris@math.wayne.edu). Research of this author was partly supported by the National Science Foundation under grants DMS-1512846 and DMS-1808978, by the US Air Force Office of Scientific Research under grant \#15RT0462, and by the Australian Research Council Discovery Project DP-190100555} and M. EBRAHIM SARABI\footnote{Department of Mathematics, Miami University, Oxford, OH 45065 (sarabim@miamioh.edu).}
\end{center}
\vspace*{0.05in}
\small{\bf Abstract.} The paper is mainly devoted to systematic developments and applications of geometric aspects of second-order variational analysis that are revolved around the concept of parabolic regularity of sets. This concept has been known in variational analysis for more than two decades while being largely underinvestigated. We discover here that parabolic regularity is the key to derive new calculus rules and computation formulas for major second-order generalized differential constructions of variational analysis in connection with some properties of sets that go back to classical differential geometry and geometric measure theory. The established results of second-order variational analysis and generalized differentiation, being married to the developed calculus of parabolic regularity, allow us to obtain novel applications to both qualitative and quantitative/numerical aspects of constrained optimization including second-order optimality conditions, augmented Lagrangians, etc. under weak constraint qualifications.\\[1ex]
{\bf Key words.} Variational analysis, differential geometry, generalized differentiation, parabolic regularity, second-order optimality conditions, augmented Lagrangians\\[1ex]
{\bf  Mathematics Subject Classification (2000)} 49J53, 49J52, 49Q20, 53B99, 90C26\vspace*{-0.1in}

\normalsize
\section{Introduction}\label{intro}\sce

{\em Modern variational analysis} has been recognized as an active and rapidly developed area of mathematics, which is based on {\em variational principles} while addressing broad classes of problems in mathematics and its applications with and without variational structures. Powerful variational principles and techniques used in this field of mathematics involve perturbation and approximation procedures and require dealing with appropriate constructions of {\em generalized differentiation} applied to sets, set-valued mappings, and nonsmooth functions. Another underlying feature of modern variational analysis is a pivoting role of {\em geometric ideas} in both finite-dimensional and infinite-dimensional settings. In fact, several basic notions widely used in variational analysis were first introduced in the framework of {\em differential geometry}; see below.

This paper concerns {\em second-order variational analysis}, which is now on the front line of research and applications. We refer the reader to the books \cite{bs,mor06,mor18,rw} with the extensive bibliographies and commentaries therein for the major methods, constructions, theoretical results, and applications established in variational analysis and related areas by using appropriate tools of second-order generalized differentiation. Here we aim at novel developments and applications that significantly increase our knowledge on the subject and open new gates for further research.

Our main attention is paid to {\em geometric aspects} of second-order analysis with focussing on local properties of {\em nonconvex sets} in finite dimensions under infinitesimal second-order perturbations. The main concept investigated and utilized in the paper is of {\em parabolic regularity} of sets. It was introduced and briefly studied by Rockafellar and Wets in \cite{rw}, but since that time it has not been further investigated and applied in variational analysis and optimization. Our goal is to reveal that this notion is {\em truly fundamental} from both viewpoints of variational theory and applications. We show that it is preserved under various operations performed on sets, and---while being combined with more recent developments in variational analysis---allows us to derive new calculus rules for major second-order generalized differential constructions of variational analysis with significant and rather surprising applications to constrained optimization.

It is conventional in modern variational analysis to deal with {\em extended-real-valued} functions $\ph\colon\R^n\to\oR:=(-\infty,\infty]$, which may attain the value of infinity in addition to real numbers. This provides, in particular, a convenient way to represent geometric constraints of the type $x\in\O$ via the {\em indicator function} $\dd_\O(x)$ of the set $\O$ that equals to $0$ for $x\in\O$ and to $\infty$ for $x\notin\O$. From this viewpoint, the study of local properties of sets corresponds to the consideration of their indicator functions, which is the main object of our analysis here.

To the best of our knowledge, the first attempts to investigate second-order generalized differential properties of extended-real-valued functions started in 1980s with the papers by Lemar\'echal and Nurminskii \cite{ln} and by Hiriart-Urruty \cite{hu1,hu2} that addressed directional derivatives of {\em convex} functions defined by using standard difference quotients. About the same time, Ben-Tal and Zowe \cite{bz1,bz2} initiated a new path toward defining second-order generalized derivatives of nonconvex but {\em finite-valued} functions by exploring the second-order difference quotients along {\em parabolic curves}. Employing a penalization technique, they established in this way some second-order optimality conditions for problems of nonlinear programming. Such a parabolic approach was further advanced by many researchers including Bonnans, Cominetti and Shapiro while coming to complete fruition in \cite{bcs,bcs2} (see also the book \cite{bs}), where second-order optimality conditions were obtained for a large class of constrained and composite optimization problems under a certain second-order regularity condition discussed below.

Other important contributions to second-order generalized differentiation in variational analysis were made by Chaney in \cite{ch0,ch1,ch2} who employed pointwise upper and lower limits of some second-order difference quotients for locally Lipschitzian functions. Similarly to Ben-Tal and Zowe, Chaney utilized a penalization method to achieve second-order optimality conditions for nonlinear programs under the classical linear independence constraint qualification (LICQ). Furthermore, he established a remarkable duality relationship between his second-order generalized derivative and the one introduced by Ben-Tal and Zowe.

In his seminal paper \cite{r88}, Rockafellar achieved a breakthrough in second-order differentiation of extended-real-valued functions by introducing the {\em epi-convergence} of second-order difference quotients, which resembled those in Chaney \cite{ch0} and were not parabolic as those in Ben-Tal and Zowe \cite{bz1,bz2}. As he showed later in \cite{r892}, the proposed approach provided a unified framework for deriving second-order optimality conditions for problems of {\em unconstrained} optimization dealing with extended-real-valued functions. To handle in this way valuable classes of (explicitly) {\em constrained} optimization problems, we require establishing relevant calculus rules for the second subderivative used in \cite{r88,r892} under appropriate constraint qualifications. This line of developments was accomplished by Rockafellar for composite models constructed from {\em fully amenable} functions of the polyhedral structure that are defined via a certain {\em metric regularity} qualification condition, which has been well understood and characterized in variational analysis. The main applications of Rockafellar's theory of {\em twice epi-differentiability} for amenable functions were provided to problems of {\em nonlinear programming} (NLPs) with ${\cal C}^2$-smooth data under the Mangasarian-Fromovitz constraint qualification (MFCQ), which is much weaker than LICQ.

Further developments on twice epi-differentiability and related issues of second-order variational analysis have been recently done in our paper \cite{mms}, where we replaced fully amenable compositions in \cite{r88,r892} by {\em fully subamenable} ones. The main difference between these two classes of extended-real-valued nonconvex functions is that the latter employs a new {\em metric subregularity qualification condition}, which significantly improves the previously used metric regularity (MFCQ, etc.) counterparts. Nevertheless, the sets and functions considered in \cite{mms} are still of the polyhedral structure, which largely restricts the spectrum of possible applications to optimization while being just revolved around NLPs and their polyhedral extensions.\vspace*{0.05in}

In this paper we make a strong {\em move away from polyhedrality} by developing a second-order geometric variational theory that does not involve any polyhedrality requirements. The key here is the concept of {\em parabolically regular sets}, which was introduced in \cite[Definition~13.65]{rw} in the functional framework, but was not explored and applied therein beyond the fully amenable setting.  Now we develop a rather comprehensive variational theory of parabolic regularity that leads us, in particular, to novel applications to nonpolyhedral classes of problems in constrained optimization with deriving no-gap second-order optimality conditions, second-order generalized differential formulas for solution maps to constrained problems, complete characterizations of quadratic growth for augmented Lagrangians. The obtained results constitute the basis of our ongoing projects on the design of new primal-dual algorithms of constrained optimization problems with justifying their superlinear convergence.\vspace*{0.05in}

The rest of the paper is organized as follows. Section~\ref{sect02} recalls and discusses some important notions of variational analysis and generalized differentiation that are broadly used throughout the whole paper. In Section~\ref{sect03} we present the underlying definition of parabolically regular sets, establish the validity of this property for important classes of sets, and reveal relationships between parabolic regularity and some generalized differential notions of second-order variational analysis that are revolved around {\em twice epi-differentiability}.

Sections~\ref{sect04}--\ref{sect05a} mainly focus on the study of second-order properties of the so-called {\em constraint systems}, which are of their own significance in geometric variational analysis while playing a crucial role in constrained optimization as sets of feasible solutions to major classes of constrained problems. We first derive new {\em calculus rules} for {\em second-order tangents} under the (very weak) {\em metric subregularity constraint qualification} (MSCQ). Then this condition is used to establish parabolic regularity for important classes of constraint systems with verifying the {\em preservation} of parabolic regularity under basic operations performed over sets. We also derive in Sections~\ref{sect05} and \ref{sect05a} precise {\em computation formulas} for the {\em second subderivatives} of the indicator functions of general parabolically regular sets and their remarkable specifications.
Sections~\ref{sect06} and \ref{sect06a} are devoted to applications of the developed theory of parabolic regularity to {\em constrained optimization} problems, where sets of feasible solutions are given by the constraint systems studied above. The main theorem of Section~\ref{sect06} provides {\em no-gap second-order necessary and sufficient optimality conditions} for a broad setting in constrained optimization under parabolic regularity. The obtained results cover, in particular, nonpolyhedral problems of {\em conic programming}, where they properly extend previously known developments under the so-called ${\cal C}^2$-{\em cone reducibility}.  Section~\ref{sect06a} concerns the study of the {\em augmented Lagrangians} associated with the constrained optimization problems that are considered here. Besides establishing new second-order properties of augmented Lagrangians with deriving precise formulas for their second subderivatives via {\em Moreau envelopes}, we obtain complete characterizations of their {\em second-order growth} under parabolic regularity, which is a new result even for classical NLPs while being of great importance for subsequent theoretical and numerical applications.

Section~\ref{sect07} deals with the {\em normal cone mappings} associated with the constraint systems under consideration, and hence it is ultimately related to {\em optimal solutions} of constrained optimization problems via first-order optimality conditions. The main result is a precise calculation of the graphical derivative of such normal cone mappings in terms of the given system data under the parabolic regularity and MSCQ conditions, which is a {\em second-order} generalized differential construction for constraint systems known as the {\em subgradient graphical derivative}. The obtained formula gives us an important second-order information on parabolically regular constraint systems that is instrumental for their subsequent study and applications.

The concluding Section~\ref{conc} summarizes the main contributions of the paper and discusses perspectives of further developments and applications of the obtained results.\vspace*{0.05in}

Our notation and terminology are standard in variational analysis; see, e.g., the books by Rockafellar and Wets \cite{rw} and Mordukhovich \cite{mor06,mor18}. For the reader's convenience and notational unification we use as a rule small Greek letters to denote scalar and extended-real-valued functions, small Latin letters for vectors and single-valued mappings/vector functions, and capital letters for sets, set-valued mappings, and matrices. Given a nonempty set $\O$ in the Euclidean space $\R^n$, the symbols $\bd\O$, $\inte\O$, $\cl\O$, and $\O^*$ stand for the boundary, interior, closure, and polar of $\O$, respectively. By $\B$ we denote the closed unit ball in the space in question and by $\B_r(x):=x+r\B$ the closed ball centered at $x$ with radius $r>0$. The distance between $x\in\R^n$ and a set $\O$ is denoted by ${\rm dist}(x;\O)$, while the projection of $x$ onto $\O$ by $P_\O(x)$. Recall also that the vector quantity $x(t)=o(t)$ with $t>0$ means that $\|x(t)\|/t\to 0$ as $t\dn 0$, that $\R_+$ and $\R_-$ signify, respectively, the collections of nonnegative and nonpositive real numbers, and that $\N:=\{1,2,\ldots\}$. The symbol $x\st{\O}{\to}\ox$ indicates that $x\to\ox$ with $x\in\O$. Given a scalar function $\ph\colon\R^n\to\R$, denote by $\nabla\ph(\ox)$ and $\nabla^2\ph(\ox)$ the gradient and Hessian of $\ph$ at $\ox$, respectively. If $f=(f_1,\ldots,f_m)\colon\R^n\to\R^m$ is a vector function that is twice differentiable at $\ox\in\R^n$, its second derivative $\nabla^2f(\ox)$ at this point is a bilinear mapping from $\R^n\times\R^n$ into $\R^m$. In what follows we use the notation $\nabla^2f(\ox)(w,v)$ meaning that
\begin{equation*}
\nabla^2f(\ox)(w,v)=\big(\big\la\nabla^2 f_1(\ox)w,v\big\ra,\ldots,\big\la\nabla^2 f_m(\ox)w,v\big\ra\big)\;\mbox{ for all }\;v,w\in\R^n.
\end{equation*}
Finally, we mention that the notation $F\colon\R^n\tto\R^m$ indicates the possibility of set values $F(x)\subset\R^m$ (including the empty set $\emp$) of $F$ for some $x\in\R^n$, in contrast to the standard notation $f\colon\R^n\to\R^m$ for single-valued mappings as well as extended-real-valued functions. The (Painlev\'e-Kuratowski) {\em outer/upper limit} of $F\colon\R^n\tto\R^m$ as $x\to\ox$ is defined as
\begin{equation}\label{pk}
\Limsup_{x\to\ox}F(x):=\big\{y\in\R^m\big|\;\exists\,x_k\to\ox,\;y_k\to y\;\mbox{ with }\;y_k\in F(x_k),\;k\in\N\big\}.
\end{equation}

\section{Preliminaries from Variational Analysis}\sce\label{sect02}

We begin this section with recalling some notions of geometric variational analysis that are broadly used throughout the paper. It is said that a family of sets $\{\O_t\}$, $t>0$, in $\R^n$ {\em converges} to a set $\O\subset\R^n$ as $t\dn 0$ if $\O$ is closed and
\begin{equation*}
\lim_{t\dn 0}{\rm dist}(w;\O_t)={\rm dist}(w;\O)\;\mbox{ for all }\;w\in\R^n.
\end{equation*}
Given a nonempty set $\O\subset\R^n$ with $\ox\in\O$, the {\em tangent cone} $T_\O(\ox)$ to $\O$ at $\ox\in\O$ is defined by
\begin{equation}\label{tan}
T_\O(\ox):=\big\{w\in\R^n\big|\;\exists\,t_k{\dn}0,\;\;w_k\to w\;\mbox{ as }\;k\to\infty\;\mbox{ with }\;\ox+t_kw_k\in\O\big\}.
\end{equation}
This notion was first introduced in differential geometry independently by Bouligand \cite{bou} and Severi \cite{se} as the set of ``contingent directions" and is often used in variational analysis under their names; see, e.g., \cite{mor06} for more details.

We say a tangent vector $w\in T_\O(\ox)$ is {\em derivable} if there exists $\xi\colon[0,\ve]\to\O$ with $\ve>0$, $\xi(0)=\ox$, and $\xi'_+(0)=w$, where $\xi'_+$ signifies the right derivative of $\xi$ at $0$ defined by
\begin{equation*}
\xi'_+(0):=\lim_{t\dn 0}\frac{\xi(t)-\xi(0)}{t}.
\end{equation*}
The set $\O$ is {\em geometrically derivable} at $\ox$ if every tangent vector $w$ to $\O$ at $\ox$ is derivable. The geometric derivability of $\O$ at $\ox$ can be equivalently described by saying that the sets $[\O-\ox]/{t}$ converge to $T_\O(\ox)$ as $t\dn 0$. Convex sets are important examples of geometrically derivable sets.

The {\em regular/Fr\'echet normal cone} to $\O$ at $\ox\in\O$ is
\begin{equation}\label{pre}
\Hat N_\O(\ox):=\Big\{v\in\R^n\Big|\;\limsup_{x\st{\O}{\to}\ox}\frac{\la v,x-\ox\ra}{\|x-\ox\|}\le 0\Big\},
\end{equation}
which can be equivalently described as $\Hat N_\O(\ox)=T_\O(\ox)^*$, the polar of the tangent cone \eqref{tan}. Note that the regular normal cone \eqref{pre} may be trivial (i.e., $\Hat N_\O(\ox)=\{0\}$) at boundary points of closed sets as, e.g., for $\O:=\big\{(x,\al)\in\R^2|\;\al\ge-|x|\}$ at $\ox=(0,0)$. This contradicts the meaning of normal vectors to sets while being a source of poor calculus for \eqref{pre}, etc. However, taking the outer limit \eqref{pk} of $\Hat N_\O(x)$ at points $x\in\O$ close to $\ox$ leads us to the following robust collection of normal vectors to sets known as the {\em limiting/Mordukhovich normal cone} to $\O$ at $\ox$:
\begin{eqnarray}\label{2.4}
N_\O(\ox):=\Limsup_{x\st{\O}{\to}\ox}\Hat N_\O(x)=\big\{v\in\R^n\big|\;\exists\,x_k\st{\O}{\to}\ox,\;v_k\to v\;\;\mbox{with}\;\;v_k\in\Hat N_\O(x_k)\big\},
\end{eqnarray}
which was introduced in \cite{m76}. Due to the usual nonconvexity of the normal cone \eqref{2.4}, it cannot be obtained as the dual/polar cone to any tangential approximation of $\O$ near $\ox$ while nevertheless enjoying {\em full calculus} based on variatiobal/extremal principles; see \cite{mor06,mor18,rw}.

A vector $v\in\R^n$ is called a {\em proximal normal} to $\O$ at $\ox$ if there exists $r\ge 0$ such that
\begin{equation}\label{proxn}
\la v, x-\ox\ra\le r\|x-\ox\|^2\;\mbox{ for all }\;x\in\O.
\end{equation}
The collection of all proximal normals to $\O$ at $\ox$ is denoted by $N_\O^p(\ox)$. To the best of our knowledge, proximal normals to nonconvex sets first appeared in Federer's paper \cite{fed} on geometric measure theory. In the general case of closed set $\O$ we always have the inclusions $N_\O^p(\ox)\subset\Hat N_\O(\ox)\subset N_\O(\ox)$, where all the cones agree and reduce to the normal cone of convex analysis if $\O$ is convex. The set $\O$ is said to be {\em normally regular} at $\ox\in\O$ if $\Hat N_\O(\ox)=N_\O(\ox)$.\vspace*{0.05in}

Consider further a set-valued mapping/multifunction $F\colon\R^n\rightrightarrows\R^m$ and define some generalized differential notions for it induced by the aforementioned constrictions for sets. Denote the domain and graph of $F$ by, respectively,
\begin{equation*}
\dom F:=\big\{x\in\R^n\big|\;F(x)\ne\emp\big\}\;\mbox{ and }\;\gph F:=\big\{(x,y)\in\R^n\times\R^m\big|\;y\in F(x)\big\}.
\end{equation*}
The {\em graphical derivative} of $F$ at $(\ox,\oy)\in\gph S$ is defined via \eqref{tan} by
\begin{equation}\label{gder}
DF(\ox,\oy)(u):=\big\{v\in\R^m\big|\;(w,v)\in T_{\scriptsize{\gph F}}(\ox,\oy)\big\},\quad u\in\R^n,
\end{equation}
while the {\em coderivative} of $F$ at this point is defined via \eqref{2.4} by
\begin{equation}\label{cod}
D^*F(\ox,\oy)(v):=\big\{u\in\R^n\big|\;(u,-v)\in N_{\scriptsize{\gph F}}(\ox,\oy)\big\},\quad v\in\R^m.
\end{equation}
Note that the generalized derivative constructions \eqref{gder} and \eqref{cod} are {\em not} dual to each other. Besides enjoying comprehensive calculus rules, an advantage of \eqref{cod} is the possibility to obtain in its terms complete pointwise characterizations of fundamental {\em well-posedness} properties of nonlinear analysis. One of these properties and its characterization is used below.

Recall that a set-valued mapping $F\colon\R^n\tto\R^m$ is {\em metrically regular} around $(\ox,\oy)\in\gph $ if there exist $\ell\ge 0$ and neighborhoods $U$ of $\ox$ and $V$ of $\oy$ such that we have the distance estimate
\begin{equation}\label{metreg}
{\rm dist}\big(x;F^{-1}(y)\big)\le\ell\,{\rm dist}\big(y;F(x)\big)\;\mbox{ for all }\;(x,y)\in U\times V.
\end{equation}
The following coderivative characterization of \eqref{metreg} is known as the {\em coderivative/Mordukhovich criterion} \cite{mor93,mor18,rw}: If the graph of $F$ is locally closed around $(\ox,\oy)$, then $F$ is metrically regular around this point if and only if we have
\begin{equation}\label{cod-cr}
{\rm ker}\,D^*F(\ox,\oy):=\big\{v\in\R^m\big|\;0\in D^*F(\ox,\oy)(v)\big\}=\{0\}.
\end{equation}
A more subtle property of mappings, that is broadly employed below, corresponds to the case where $y=\oy$ is fixed in \eqref{metreg} and is known as {\em metric subregularity} of $F$ at $(\ox,\oy)$.

Note that the coderivative criterion \eqref{cod-cr} is the key to convert the metric regularity property into pointwise constraint qualifications, which reduce for particular classes of optimization problems to well-known ones as MFCQ, Robinson's constraint qualification, etc. It is not the case for metric subregularity; see Section~\ref{sect04} for more discussions.\vspace*{0.05in}

To continue, we recall now some generalized differential constructions for extended-real-valued functions while mainly concentrating on {\em second-order} ones by following the book of Rockafellar and Wets \cite{rw}. These constructions are also geometric in nature, but it is convenient for the subsequent applications to present their analytic representations. Note that in this paper we mostly study and apply the {\em primal-space} generalized second-order derivatives for extended-real-valued functions without appealing to the {\em dual-space} second-order subdifferentials (or generalized Hessians) in the sense of \cite{m92}, which are defined via the coderivative \eqref{cod} to the first-order subgradient mapping \eqref{sub} generated by \eqref{2.4}. The reader is referred to the books \cite{mor06,mor18,rw} with the bibliographies and commentaries therein for the dual-spaces generalized differential theory and a variety of applications.

Given a function $\ph\colon\R^n\to\oR$ with its domain and epigraph formed by
\begin{equation*}
\dom\ph:=\big\{x\in\R^n\big|\;\ph(x)<\infty\}\;\mbox{ and }\;\epi\ph:=\big\{(x,\al)\in\R^{n+1}\big|\;\al\ge\ph(x)\big\},
\end{equation*}
respectively, the {\em subderivative} of $\ph$ at $\ox\in\dom\ph$ is defined by
\begin{equation}\label{subder}
\d\ph(\ox)(\ow):=\liminf_{\substack{t\dn 0\\w\to\ow}}{\frac{\ph(\ox+tw)-\ph(\ox)}{t}},\quad\ow\in\R^n,
\end{equation}
whose epigraph is the tangent cone \eqref{tan} to $\epi\ph$ at $(\ox,\ph(\ox))$. Yet another geometric relationship for \eqref{subder} is via the set indicator function $\d\dd_\O(\ox)=\dd_{T_\O(\ox)}$ for all $\ox\in\O$.

The {\em subdifferential} (collections of subgradients) of $\ph$ at $\ox\in\dom\ph$ is generally defined geometrically via the normal cone \eqref{2.4} to $\epi\ph$ by
\begin{equation}\label{sub}
\partial\ph(\ox):=\big\{v\in\R^n\big|\;(v,-1)\in N_{\scriptsize{\epi\ph}}\big(\ox,\ph(\ox)\big)\big\}
\end{equation}
while reducing in the case of convex functions to the classical subdifferential of convex analysis
\begin{equation*}
\partial\ph(\ox):=\big\{v\in\R^n\big|\;\la v,x-\ox\ra\le\ph(x)-\ph(\ox)\;\mbox{ for all }\;x\in\R^n\big\}.
\end{equation*}

Fixing further $\ox\in\dom\ph$ and $\ov\in\R^n$, consider the parametric family of second-order difference quotients for $\ph$ at $(\ox,\ov)$ given by
\begin{equation*}
\Delta_t^2\ph(\bar x,\ov)(u):=\dfrac{\ph(\ox+tu)-\ph(\ox)-t\langle\ov,u\rangle}{\sm t^2}\;\mbox{ with }\;u\in\R^{n}\;\mbox{ and }\;t>0.
\end{equation*}
The {\em second subderivative} of $\ph$ at $\ox$ for $\ov$ is the function $w\mapsto\d^2\ph(\bar x,\ov)(w)$ defined by
\begin{equation}\label{ssd}
\d^2\ph(\bar x,\ov)(w):=\liminf_{\substack{t\dn 0\\ u\to w}}\Delta_t^2\ph(\ox,\ov)(u).
\end{equation}

The following statement taken from \cite[Proposition~13.5]{rw} collects some elementary properties of the second subderivative \eqref{ssd} used throughout the paper.

\begin{Proposition}[\bf properties of second subderivative]\label{ssp} Let $\ph\colon\R^n\to\oR$, and let $(\ox,\ov)\in(\dom\ph)\times\R^n$. Then we have the assertions:

{\bf(i)} The second subderivative $\d^2\ph(\bar x,\ov)$ is a lower semicontinuous $($l.s.c.$)$ function.

{\bf(ii)} The second subderivative $\d^2\ph(\bar x,\ov)$ is positive homogeneous of degree $2$, i.e., $\d^2\ph(\bar x,\ov)(tw)=t^2\d^2\ph(\bar x,\ov)(w)$
for all $w\in\R^n$ and $t>0$.

{\bf(iii)} Whenever $w\in\R^n$, the mapping $\ov\mapsto\d^2\ph(\bar x,\ov)(w)$ is concave.

{\bf(iv)} If the second subderivative $\d^2\ph(\bar x,\ov)$ is a proper function, i.e., $\d^2\ph(\bar x,\ov)(w)>-\infty$ for all $w\in\R^n$ and $\dom\d^2\ph(\bar x,\ov)\ne\emp$, then
\begin{equation*}
\dom\d^2\ph(\bar x,\ov)\subset\big\{w\in\R^n\big|\;\d\ph(\ox)(w)=\la\ov,w\ra\big\}.
\end{equation*}
\end{Proposition}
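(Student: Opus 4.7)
The plan is to derive all four assertions directly from the defining formula~\eqref{ssd}, exploiting two structural features of the second-order difference quotient: it depends affinely on $\ov$, and it scales cleanly under the substitution $(t,u)\mapsto(\lambda t,u/\lambda)$. For assertion~(i), I would invoke the general fact that a liminf taken over a neighborhood of a fixed parameter is lower semicontinuous in the free variable: given $w_k\to\ow$, I would select for each $k$ a pair $(t_k,u_k)$ with $t_k\dn 0$, $\|u_k-w_k\|\le 1/k$, and $\Delta_{t_k}^2\ph(\ox,\ov)(u_k)\le\d^2\ph(\ox,\ov)(w_k)+1/k$, and then extract a diagonal subsequence that is feasible in the liminf defining $\d^2\ph(\ox,\ov)(\ow)$. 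Assertion~(ii) will follow from the identity
\begin{equation*}
\Delta_\tau^2\ph(\ox,\ov)(\lambda u)=\lambda^2\,\Delta_{\lambda\tau}^2\ph(\ox,\ov)(u)\quad\mbox{for all }\;\lambda,\tau>0,
\end{equation*}
verified by direct substitution; the change of variables $u\mapsto u/\lambda$ and $\tau\mapsto\tau/\lambda$ then transforms the liminf defining $\d^2\ph(\ox,\ov)(\lambda w)$ into $\lambda^2$ times the one defining $\d^2\ph(\ox,\ov)(w)$.

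For assertion~(iii), the key observation I would use is that with $t$, $u$, and $w$ held fixed, the quotient $\Delta_t^2\ph(\ox,\ov)(u)$ is an affine function of $\ov$. I would then rewrite the liminf as the monotone limit
\begin{equation*}
\d^2\ph(\ox,\ov)(w)=\sup_{\ve>0}\inf\big\{\Delta_t^2\ph(\ox,\ov)(u)\;\big|\;0<t<\ve,\;\|u-w\|<\ve\big\},
\end{equation*}
so that the inner infimum, being a pointwise infimum of affine functions of $\ov$, is concave in $\ov$, while the outer supremum is taken over a family that is monotone nondecreasing as $\ve\dn 0$. Since a monotone pointwise limit of concave functions remains concave, the claim would follow.

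Assertion~(iv) will be the main obstacle, and my argument will genuinely use both halves of the properness hypothesis. Given $w\in\dom\d^2\ph(\ox,\ov)$, I would pick a recovery sequence $t_k\dn 0$, $u_k\to w$ along which $\Delta_{t_k}^2\ph(\ox,\ov)(u_k)$ stays bounded above, and rearrange the definition of $\Delta_{t_k}^2\ph$ as
\begin{equation*}
\frac{\ph(\ox+t_k u_k)-\ph(\ox)}{t_k}=\la\ov,u_k\ra+\tfrac{t_k}{2}\Delta_{t_k}^2\ph(\ox,\ov)(u_k)\longrightarrow\la\ov,w\ra,
\end{equation*}
to obtain $\d\ph(\ox)(w)\le\la\ov,w\ra$. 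For the reverse inequality I would argue by contradiction: should $\d\ph(\ox)(w)<\la\ov,w\ra-\delta$ for some $\delta>0$, a recovery sequence for the first subderivative~\eqref{subder} would force $\Delta_{t_k}^2\ph(\ox,\ov)(u_k)\le-\delta/(2t_k)\to-\infty$, yielding $\d^2\ph(\ox,\ov)(w)=-\infty$ and contradicting properness. The delicate part will be precisely this: finiteness of $\d^2\ph(\ox,\ov)$ on its domain alone would not block such a scenario, and it is the global lower bound $\d^2\ph(\ox,\ov)>-\infty$ encoded in properness that makes the contradiction go through.
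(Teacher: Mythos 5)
Your proof is correct. Bear in mind that the paper itself contains no argument for this proposition: it is imported verbatim from \cite[Proposition~13.5]{rw}, so there is nothing in-paper to compare against, and your self-contained verification follows the standard route one would find in that reference. All four parts check out: the diagonal selection in (i) is the usual proof that a liminf over $(t,u)\to(0,w)$ is automatically l.s.c.\ in $w$ (just adjust the selection of $(t_k,u_k)$ when $\d^2\ph(\ox,\ov)(w_k)=\pm\infty$, e.g.\ requiring the quotient to lie below $-k$ in the first case); the scaling identity in (ii) is verified by direct substitution and the change of variables is legitimate since $s=\lambda\tau\dn 0$ iff $\tau\dn 0$ for fixed $\lambda>0$; and in (iv) you correctly isolate the one place where the global lower bound $\d^2\ph(\ox,\ov)>-\infty$ is indispensable, namely in ruling out $\d\ph(\ox)(w)<\la\ov,w\ra$ via the blow-up $\Delta^2_{t_k}\ph(\ox,\ov)(u_k)\le-\delta/t_k\to-\infty$. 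The only point in (iii) worth making explicit is that when $\ph(\ox+tu)=+\infty$ the quotient is the constant function $+\infty$ of $\ov$ rather than a finite affine one; since a pointwise infimum of affine and identically-$+\infty$ functions is still concave, the inner infimum remains concave, and your observation that the outer supremum is a \emph{monotone increasing} limit as $\ve\dn 0$ (a general supremum of concave functions being not concave) is exactly the point that makes the argument work.
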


A function $\ph\colon\R^n\to\oR$ is said to be {\em twice epi-differentiable} at $\bar x$ for $\ov$ if the sets $\epi\Delta_t^2\ph(\ox,\ov)$ converge to $\epi\d^2\ph(\bar x,\ov)$ as $t\dn 0$. If in addition the second subderivative \eqref{ssd} is a proper function, then $\ph$ is said to be {\em properly twice epi-differentiable} at $\bar x$ for $\ov$. It follows from \cite[Proposition~7.2]{rw} that the twice epi-differentiability of $f$ at $\bar x$ for $\ov$ can be equivalently described as follows: for every $w\in\R^n$ and every sequence $t_k\downarrow 0$ there is a sequence $w_k\to w$ such that
\begin{equation}\label{twice-epi}
\Delta_{t_k}^2\ph(\bar x,\ov)(w_k)\to\d^2\ph(\bar x,\ov)(w).
\end{equation}

The main attention in this paper is paid to the study of twice epi-differentiability of {\em sets} via their indicator functions. Given a set $\O\subset\R^n$ and a pair $(\ox,\ov)\in\O\times\R^n$, we can deduce from \eqref{twice-epi} and the definition of geometric derivability that the proper twice epi-differentiability of $\dd_\O$ at $\ox$ for $\ov$ amounts to saying that $\d^2\dd_\O(\bar x,\ov)$ is proper, and that for any $w\in\R^n$ there exist $\ve>0$ and an arc $\xi\colon[0,\ve]\to\R^n$ with $\xi(0)=\ox$ and $\xi'_+(0)=w$ with
\begin{equation}\label{df02}
\Delta_{t}^2\dd_\O(\bar x,\ov)(w_t)\to\d^2\dd_\O(\bar x,\ov)(w)\;\; \mbox{as}\;\;t\dn 0\;\mbox{ with }\;w_t:=\frac{\xi(t)-\xi(0)}{t}.
\end{equation}
Recall also that the {\em second-order tangent set} to $\O$ at $\ox$ for $w$ with $\ox\in\O$ and $w\in T_\O(\ox)$ is
\begin{equation}\label{2tan}
T_\O^2(\ox,w)=\big\{u\in\R^n\big|\;\exists\,t_k{\downarrow}0,\;u_k\to u\;\mbox{ as }\;k\to\infty\;\;\mbox{with}\;\;\ox+t_kw+\sm t_k^2u_k\in\O\big\}.
\end{equation}
It is easy to see that $T_\O^2(\ox,0)=T_\O(\ox)$. If in addition $\O$ is a closed cone, then we have $T_\O^2(0,w)=T_{\O}(w)$ for all $w\in \O $.

Finally in this section, we say that $\O$ is {\em parabolically derivable} at $\ox$ for $w\in\R^n$ if $T_\O^2(\ox,w)\ne\emp$ and for each $u\in T_\O^2(\ox,w)$ there exist a number $\ve>0$ and an arc $\xi\colon[0,\ve]\to\O$ such that $\xi(0)=\ox$, $\xi'_+(0)=w$, and $\xi''_+(0)=u$ with
\begin{equation*}
\xi''_+(0):=\lim_{t\dn 0}\frac{\xi(t)-\xi(0)-t\xi'_+(0)}{\sm t^2}.
\end{equation*}
It is well known that if $\O\subset\R^n$ is convex and parabolically derivable at $\ox\in\O$ for every vector $w\in T_\O(\ox)$, then the second-order tangent set $T_\O^2(\ox,w)$ is a nonempty convex subset of $\R^n$.

\section{Parabolic Regularity and Twice Epi-Differentiability}\sce\label{sect03}

In this section we define and study the underlying notion of {\em parabolically regular sets} and reveal its role in the study of crucial second-order variational properties of extended-real-valued functions. It is also shown that parabolic regularity  provides a unified framework under  which {\em twice epi-differentiability of indicator functions} can be justified.
\vspace*{0.05in}

Let us start with the basic definition of parabolic regularity for sets.

\begin{Definition}[\bf parabolic regularity of sets]\label{par-reg} A nonempty set $\O\subset\R^n$ is {\sc parabolically regular} at $\ox\in\O$ for $\ov\in\R^n$ if for any $w\in\R^n$ with $\d^2\dd_\O(\ox,\ov)(w)<\infty$ there exist, among all the sequences $t_k\dn 0$ and $w_k\to w$ satisfying the condition
\begin{equation*}
\Delta_{t_k}^2\dd_\O(\bar x,\ov)(w_k)\to\d^2\dd_\O(\bar x,\ov)(w)\;\mbox{ as }\;k\to\infty,
\end{equation*}
those with the additional property that
\begin{equation}\label{par-reg1}
\limsup_{k\to\infty}\frac{\|w_k-w\|}{t_k}<\infty.
\end{equation}
\end{Definition}\vspace*{0.02in}

Parabolic regularity was introduced in \cite[Definition~13.65]{rw} for extended-real-valued functions but was not further elaborated either in \cite{rw}, or in subsequent publications. However, it was understood therein and discussed in the commentary section of \cite[Chapter~13]{rw} (p.\ 640) that such a second-order regularity in the functional framework has the potential for applications to second-order sufficient optimality conditions in terms of second subderivatives.

A different notion of second-order regularity for sets was introduced by Bonnans, Cominetti and Shapiro \cite[Definition~3]{bcs}. As explained in the commentaries to \cite[Chapter~13]{rw} (p.\ 640), the parabolic regularity from Definition~\ref{par-reg} is implied by the second-order regularity in the sense of \cite{bcs}. Furthermore, the example given in \cite[p.\ 215]{bs} shows that the converse implication fails in general. This tells us that the parabolic regularity from Definition~\ref{par-reg} is {\em strictly weaker} than the second-order regularity from \cite[Definition~3]{bcs}.\vspace*{0.05in}

To proceed further, recall from \cite[Definition~13.59]{rw} that  the {\em parabolic subderivative} of a proper function $\ph\colon\R^n\to\oR$ at $\ox\in\dom\ph$ relative to a vector $w\in\R^n$ with $\d\ph(\ox)(w)$  finite and a vector $z\in\R^n$ is defined by
\begin{equation}\label{par-subder}
\d^2\ph(\bar x)(w,z):=\liminf_{\substack{t\dn 0\\ u\to z}}\dfrac{\ph(\ox+tw+\sm t^2u)-\ph(\ox)-t\d\ph(\ox)(w)}{\sm t^2}.
\end{equation}
As shown in \cite[Proposition~13.64]{rw}, for any $\ox\in\dom\ph$ and for any $\ov$ and $w$ satisfying $\d\ph(\ox)(w)=\la\ov,w\ra$, we always have the following relationships between \eqref{ssd} and \eqref{par-subder}:
\begin{equation}\label{spr}
\d^2\ph(\bar x,\ov)(w)\le\inf_{z\in\R^n}\big\{\d^2\ph(\ox)(w,z)-\la\ov,z\ra\big\}.
\end{equation}

This paper is mostly devoted to  the case where $\ph$ is the indicator function of a set, and thus it is helpful to get an explicit set counterpart of \eqref{spr}. We can do this by using the well known construction of the critical cone associated with a given set $\O\subset\R^n$. Picking a pair $(\ox,\ov)\in\gph N_\O$, the {\em critical cone} to $\O$ at $(\ox,\ov)$ is defined by
\begin{equation}\label{crit1}
K_\O(\ox,\ov):=T_\O(\ox)\cap\{\ov\}^\perp
\end{equation}
via the tangent cone \eqref{tan} to $\O$ at $\ox$ and the orthogonal complement of $\ov$ in $\R^n$. The next proposition is instrumental to establish the main results of this section on calculating second subderivatives of indicator functions for parabolically derivable and parabolically regular sets with proving their twice epi-differentiability.

\begin{Proposition}[\bf relationship between second and parabolic subderivatives for sets]\label{pard} Let $\O\subset\R^n$ be closed set with $\ox\in\O$. Then for any $w\in K_\O(\ox,\ov)$ we always have the inequality
\begin{equation}\label{pr2}
\d^2\dd_\O(\bar x,\ov)(w)\le-\sigma_{T^2_\O(\ox,w)}(\ov),
\end{equation}
where $\sigma_{T^2_\O(\ox,w)}$ stands for the support function of the second-order tangent set $T^2_\O(\ox,w)$.
\end{Proposition}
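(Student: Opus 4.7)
The plan is to reduce the claim directly to the general inequality \eqref{spr} relating the second subderivative to the parabolic subderivative, applied to the indicator $\varphi=\dd_\O$, and then to evaluate both quantities on the right-hand side explicitly for indicator functions.

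First, I would verify that the hypothesis of \eqref{spr}, namely $\d\ph(\ox)(w)=\la\ov,w\ra$, is met here. Since $w\in K_\O(\ox,\ov)=T_\O(\ox)\cap\{\ov\}^\perp$, we have $\la\ov,w\ra=0$, while the subderivative of an indicator satisfies $\d\dd_\O(\ox)=\dd_{T_\O(\ox)}$, so $\d\dd_\O(\ox)(w)=0$ as well. Thus \eqref{spr} applies and yields
\begin{equation*}
\d^2\dd_\O(\ox,\ov)(w)\le\inf_{z\in\R^n}\big\{\d^2\dd_\O(\ox)(w,z)-\la\ov,z\ra\big\}.
\end{equation*}

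The second step is to identify $\d^2\dd_\O(\ox)(w,\cdot)$ with the indicator of the second-order tangent set $T_\O^2(\ox,w)$. With $\ph=\dd_\O$ and $\d\ph(\ox)(w)=0$, the definition \eqref{par-subder} reduces to
\begin{equation*}
\d^2\dd_\O(\ox)(w,z)=\liminf_{\substack{t\dn 0\\u\to z}}\frac{\dd_\O\big(\ox+tw+\tfrac12 t^2u\big)}{\tfrac12 t^2},
\end{equation*}
whose values lie in $\{0,+\infty\}$. The liminf equals $0$ exactly when some sequence $t_k\dn 0$, $u_k\to z$ keeps $\ox+t_kw+\tfrac12t_k^2u_k\in\O$, i.e., precisely when $z\in T_\O^2(\ox,w)$ by \eqref{2tan}; otherwise the liminf is $+\infty$. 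Hence $\d^2\dd_\O(\ox)(w,z)=\dd_{T^2_\O(\ox,w)}(z)$.

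Finally, inserting this identity into the displayed upper bound gives
\begin{equation*}
\inf_{z\in\R^n}\big\{\dd_{T^2_\O(\ox,w)}(z)-\la\ov,z\ra\big\}=-\sup_{z\in T^2_\O(\ox,w)}\la\ov,z\ra=-\sigma_{T^2_\O(\ox,w)}(\ov),
\end{equation*}
which is exactly \eqref{pr2}. I do not expect any real obstacle in this argument: the content is entirely encapsulated in \eqref{spr}, and the only verifications needed are the compatibility condition $\d\dd_\O(\ox)(w)=\la\ov,w\ra$ (immediate from $w\in K_\O(\ox,\ov)$) and the geometric identification $\d^2\dd_\O(\ox)(w,\cdot)=\dd_{T^2_\O(\ox,w)}$, which follows directly from the defining formulas. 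Note also that \eqref{pr2} is vacuously true if $T_\O^2(\ox,w)=\emp$, since then the supremum is $-\infty$ and the right-hand side is $+\infty$, so no special handling of that case is required.
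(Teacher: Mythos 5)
Your proposal is correct and follows essentially the same route as the paper's proof: both verify that $w\in K_\O(\ox,\ov)$ gives the compatibility condition $\d\dd_\O(\ox)(w)=\la\ov,w\ra$, identify $\d^2\dd_\O(\ox)(w,\cdot)=\dd_{T^2_\O(\ox,w)}$, and then specialize \eqref{spr} to $\ph=\dd_\O$. Your write-up simply spells out the verifications that the paper leaves implicit.
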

\begin{proof} As mentioned above, $\d\dd_\O(\ox)=\dd_{T_\O(\ox)}$ for any point $\ox\in\O$. Thus we can equivalently express the critical cone \eqref{crit1} to $\O$ at $(\ox,\ov)$ by
\begin{equation*}
K_\O(\ox,\ov)=\big\{w\in\R^n\big|\;\d\dd_\O(\ox)(w)=\la\ov,w\ra\big\}.
\end{equation*}
Furthermore, it follows directly from the definition that $\d^2\dd_\O(\ox)(w,\cdot)=\dd_{T^2_\O(\ox,w)}(\cdot)$, and hence \eqref{pr2} is a consequence of \eqref{spr} for the case where $\ph=\dd_\O$.
\end{proof}

It is important for our subsequent results to find efficient conditions ensuring that \eqref{pr2} holds as equality. The first result in this direction was obtained by Rockafellar \cite[Proposition~3.5]{r88} who proved the equality in \eqref{pr2} for the class of convex piecewise linear-quadratic functions. Furthermore, it can be deduced from \cite[Theorem~4.5]{r88} that this equality holds for the large class of fully amenable functions introduced later in \cite{pr96} as compositions a piecewise linear-quadratic functions and ${\cal C}^2$-smooth mappings and the metric regularity qualification condition. As we show below, the equality in \eqref{pr2} is actually {\em equivalent} to the parabolic regularity of $\O$, which goes far beyond full amenability. Although we establish this result for sets, it can be derived for a larger class of extended-real-valued functions with the corresponding properties.

\begin{Theorem}[\bf second-order subderivatives and parabolic regularity for indicator functions]\label{pri} Let $\O\subset\R^n$ be a closed set with $\ox\in\O$, and let $\ov\in N_{\O}^p(\ox)$. Assume that $\O$ is parabolically derivable at $\ox$ for every vector $w\in K_\O(\ox,\ov)$. Then we have:

{\bf(i)} The second subderivative $\d^2\dd_\O(\bar x,\ov)$ is proper and l.s.c.\ on $\R^n$. Furthermore, there exists a number $r\ge 0$ such that $\d^2\dd_\O(\bar x,\ov)(w)\ge-r\|w\|^2$ for all $w\in\R^n$, which implies that
\begin{equation*}
\dom\d^2\dd_\O(\bar x,\ov)= K_\O(\ox,\ov).
\end{equation*}

{\bf(ii)} If $\O$ is parabolically regular at $\ox$ for $\ov$, then for any vector $w\in K_\O(\ox,\ov)$ there exists a second-order tangent $u\in T^2_\O(\bar x,w)$ such that
\begin{equation*}
\d^2\dd_\O(\bar x,\ov)(w)=-\sigma_{T^2_\O(\ox,w)}(\ov)=-\la\ov,u\ra.
\end{equation*}

{\bf(iii)} The set $\O$ is parabolically regular at $\ox$ for $\ov$ if and only if
\begin{equation}\label{pri1}
\d^2\dd_\O(\bar x,\ov)(w)=-\sigma_{T^2_\O(\ox,w)}(\ov)\;\mbox{ for all }\;w\in K_\O(\ox,\ov).
\end{equation}
\end{Theorem}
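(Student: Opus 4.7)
The plan is to prove the three parts in sequence: the proximal‑normal inequality drives part (i); parabolic regularity is used in part (ii) to ``parabolize'' a liminf‑realizing sequence; and the converse in part (iii) reduces to recovering attainment in the support‑function side. Throughout write $K:=K_\O(\ox,\ov)$ and $\sigma(w):=\sigma_{T^2_\O(\ox,w)}(\ov)$, and note that $\d\dd_\O(\ox)=\dd_{T_\O(\ox)}$, so $K=\{w\in T_\O(\ox):\langle\ov,w\rangle=0\}$.

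For (i), I would invoke $\ov\in N^p_\O(\ox)$: inequality \eqref{proxn} applied at $x=\ox+t_kw_k\in\O$ gives $\langle\ov,w_k\rangle\le rt_k\|w_k\|^2$, so $\Delta_{t_k}^2\dd_\O(\ox,\ov)(w_k)=-2\langle\ov,w_k\rangle/t_k\ge-2r\|w_k\|^2$. Passing to the liminf yields $\d^2\dd_\O(\ox,\ov)(w)\ge-2r\|w\|^2$ on all of $\R^n$, which, together with $\d^2\dd_\O(\ox,\ov)(0)=0$, secures properness; lower semicontinuity is Proposition~\ref{ssp}(i). Proposition~\ref{ssp}(iv) then gives $\dom\d^2\dd_\O(\ox,\ov)\subseteq K$. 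For the reverse inclusion, parabolic derivability at any $w\in K$ supplies $u\in T^2_\O(\ox,w)$ and an arc $\xi$ with $\xi(0)=\ox$, $\xi'_+(0)=w$, $\xi''_+(0)=u$; taking $w_t:=(\xi(t)-\ox)/t$ and using $\langle\ov,w\rangle=0$ yields $\Delta_t^2\dd_\O(\ox,\ov)(w_t)\to-\langle\ov,u\rangle$, whence $\d^2\dd_\O(\ox,\ov)(w)\le-\langle\ov,u\rangle<\infty$ and thus $w\in\dom\d^2\dd_\O(\ox,\ov)$.

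For (ii), starting from any $(t_k,w_k)\to(0,w)$ with $\Delta_{t_k}^2\dd_\O(\ox,\ov)(w_k)\to\d^2\dd_\O(\ox,\ov)(w)$, parabolic regularity lets me assume $\|w_k-w\|/t_k$ bounded. Set $u_k:=2(w_k-w)/t_k$; along a subsequence $u_k\to u$. Since $\ox+t_kw+\tfrac{1}{2}t_k^2u_k=\ox+t_kw_k\in\O$, definition \eqref{2tan} places $u\in T^2_\O(\ox,w)$. Because $\langle\ov,w\rangle=0$, direct computation gives $\Delta_{t_k}^2\dd_\O(\ox,\ov)(w_k)=-\langle\ov,u_k\rangle\to-\langle\ov,u\rangle$, so $\d^2\dd_\O(\ox,\ov)(w)=-\langle\ov,u\rangle$. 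Combining with Proposition~\ref{pard}'s inequality $\d^2\dd_\O(\ox,\ov)(w)\le-\sigma(w)$ and the obvious $\langle\ov,u\rangle\le\sigma(w)$ pinches all of these to equalities, delivering $\langle\ov,u\rangle=\sigma(w)=-\d^2\dd_\O(\ox,\ov)(w)$.

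For (iii), the forward implication is immediate from (ii). For the converse I would fix $w$ with $\d^2\dd_\O(\ox,\ov)(w)<\infty$, note via (i) that $w\in K$ and $\sigma(w)=-\d^2\dd_\O(\ox,\ov)(w)$ is finite, and reduce parabolic regularity at $w$ to exhibiting one sequence realizing the liminf with bounded ratio. The route is to show the supremum defining $\sigma(w)$ is attained at some $u^{\star}\in T^2_\O(\ox,w)$: take $u_j\in T^2_\O(\ox,w)$ with $\langle\ov,u_j\rangle\to\sigma(w)$, use the equality $\d^2\dd_\O(\ox,\ov)(w)=-\sigma(w)$ to select a bounded such sequence (exploiting the closedness of $T^2_\O(\ox,w)$ and the quadratic lower bound from (i)), and pass to a convergent subsequence. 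With $u^{\star}$ in hand, parabolic derivability at $w$ for $u^{\star}$ produces an arc $\xi$ whose rescaled differences $w_k:=(\xi(t_k)-\ox)/t_k$ satisfy $\Delta_{t_k}^2\dd_\O(\ox,\ov)(w_k)\to-\langle\ov,u^{\star}\rangle=\d^2\dd_\O(\ox,\ov)(w)$ with $\|w_k-w\|/t_k\to\tfrac{1}{2}\|u^{\star}\|$ bounded, so parabolic regularity holds.

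The hardest step is the attainment assertion inside the converse of (iii): finiteness of $\sigma(w)$ does not by itself force bounded maximizing sequences in the (possibly unbounded, merely closed) set $T^2_\O(\ox,w)$. My plan is to extract a bounded maximizer by exploiting both the liminf realization of $\d^2\dd_\O(\ox,\ov)(w)$ via admissible $(t_k,u_k)$ with $\ox+t_kw+\tfrac{1}{2}t_k^2u_k\in\O$ and $\langle\ov,u_k\rangle\to\sigma(w)$, and the proximal‑normal bound $\langle\ov,u_k\rangle\le 2r\|w+\tfrac{1}{2}t_ku_k\|^2$ from \eqref{proxn}, which curbs pathological unbounded behaviour of the $u_k$. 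Should a clean attainment argument prove elusive, a diagonal construction that combines parabolic derivability over an approximate maximizing family $\{u_j\}$ with the quadratic lower bound $\d^2\dd_\O(\ox,\ov)(w)\ge-2r\|w\|^2$ from (i) can still be engineered to produce a bounded‑ratio liminf‑realizing sequence without passing through a literal maximizer.
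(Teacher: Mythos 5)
Your arguments for parts (i) and (ii) are essentially the paper's own: the proximal-normal inequality \eqref{proxn} yields the quadratic lower bound and properness, parabolic derivability plus Proposition~\ref{pard} gives $\dom\d^2\dd_\O(\ox,\ov)\supset K_\O(\ox,\ov)$, and in (ii) the bounded ratio lets you extract $u=\lim 2(w_k-w)/t_k\in T^2_\O(\ox,w)$ and pinch $\d^2\dd_\O(\ox,\ov)(w)=-\la\ov,u\ra=-\sigma_{T^2_\O(\ox,w)}(\ov)$. These steps are correct.

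The converse implication in (iii) is where your proposal has a genuine gap. Your primary route requires the supremum defining $\sigma_{T^2_\O(\ox,w)}(\ov)$ to be attained at some $u^\star\in T^2_\O(\ox,w)$, but finiteness of that supremum over a closed, possibly unbounded set does not give attainment, and neither of the tools you invoke repairs this: the proximal-normal estimate only bounds $\la\ov,u_k\ra$ from above (it says nothing about the components of a maximizing sequence orthogonal to $\ov$, which may blow up), and the quadratic lower bound from (i) constrains values of the second subderivative, not the size of approximate maximizers. Your fallback diagonal construction fails for the same reason: running parabolic derivability over approximate maximizers $u_j$ with $\la\ov,u_j\ra\to\sigma_{T^2_\O(\ox,w)}(\ov)$ produces sequences whose ratio $\|w_k-w\|/t_k$ is of order $\tfrac12\|u_j\|$, so if the $u_j$ are unbounded the diagonal sequence realizes the value $\d^2\dd_\O(\ox,\ov)(w)$ but with $\limsup_k\|w_k-w\|/t_k=\infty$, which is exactly what Definition~\ref{par-reg} forbids. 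In other words, without a bounded approximate-maximizing family you cannot ``engineer'' a bounded-ratio realizing sequence, and boundedness of such a family is equivalent to the attainment you could not establish. The paper sidesteps all of this by citing \cite[Proposition~13.64]{rw}, which identifies $-\sigma_{T^2_\O(\ox,w)}(\ov)$ with the liminf of $\Delta_t^2\dd_\O(\ox,\ov)(u)$ taken only over sequences with $[u-w]/t$ bounded; the hypothesis \eqref{pri1} then says the restricted and unrestricted liminfs coincide, which is parabolic regularity essentially by definition. To close your argument you would either need to import that result or prove attainment (or boundedness of some maximizing sequence) directly, and as written neither is done.
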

\begin{proof} We begin with verifying (i) and observe first that $\d^2\dd_\O(\ox,\ov)$ is l.s.c.\ due to Proposition~\ref{ssp}(i).
Since $\ov\in N^p_\O(\ox)$, it follows from definition \eqref{proxn} that there exists $r\ge 0$ such that
\begin{equation*}
\la\ov,x-\ox\ra\le\hbox{${r\over 2}$}\|x-\ox\|^2\;\mbox{ for all }\;x\in\O.
\end{equation*}
Picking further any $w\in\R^n$ together with $t\dn 0$ and $u\to w$, we get
\begin{equation*}
\Delta_t^2\dd_\O(\ox,\ov)(u)=\frac{\dd_\O(\ox+tu)-\dd_\O(\ox)-t\langle\ov,u\rangle}{\sm t^2}
\ge\begin{cases}
\infty&\mbox{if}\;\;\ox+tu\notin\O,\\
-r\|u\|^2&\mbox{if}\;\;\ox+tu\in\O.
\end{cases}
\end{equation*}
This implies by definition \eqref{ssd} of the second subderivative that
\begin{equation*}
\d^2\dd_\O(\ox,\ov)(w)\ge-r\|w\|^2,
\end{equation*}
and therefore that $\d^2\dd_\O(\ox,\ov)(0)\ge 0$. Since $\d^2\dd_\O(\ox,\ov)$ is positive homogeneous of degree 2 by Proposition~\ref{ssp}(ii),
we get $\d^2\dd_\O(\ox,\ov)(0)=0$, which  proves that $\d^2\dd_\O(\ox,\ov)$ is a proper function. Combining this with Proposition~\ref{ssp}(iv) yields
\begin{equation}\label{domd}
\dom\d^2\dd_\O(\bar x,\ov)\subset\big\{w\in\R^n\big|\:\d\dd_\O(\ox)(w)=\la\ov,w \ra\big\}= K_\O(\ox,\ov).
\end{equation}
To verify the opposite inclusion, we deduce from the parabolic derivability  of $\O$ at $\ox$ for $w$ that $T^2_\O(\ox,w)\ne\emp$ and thus $-\sigma_{T^2_\O(\ox,w)}<\infty$, which tells us together with \eqref{pr2} that $\d^2\dd_\O(\bar x,\ov)(w)$ is finite for all $w\in K_\O(\ox,\ov)$. Thus we arrive at the equality $\dom\d^2\dd_\O(\bar x,\ov)=K_\O(\ox ,\ov)$,
which completes the proof of (i).

To prove now part (ii), pick $w\in K_\O(\ox,\ov)$ and conclude from (i) that $\d^2\dd_\O(\bar x,\ov)(w)$ is finite. Thus it follows from parabolic regularity of $\O$ at $\ox$ for $\ov$ that there exist sequences $t_k\dn 0$ and $w_k\to w$ as $k\to\infty$ for which
\begin{equation}\label{pri4}
\Delta_{t_k}^2\dd_\O(\ox,\ov)(w_k)\to\d^2\dd_\O(\bar x,\ov)(w)\;\mbox{ and }\;\limsup_{k\to\infty}\frac{\|w_k-w\|}{t_k}<\infty.
\end{equation}
Using again that $\d^2\dd_\O(\bar x,\ov)(w)$ is finite, we have   $\ox+t_kw_k\in\O$ whenever $k\in\N$ is sufficiently large. The boundedness of the sequence $\big\{(w_k-w)/t_k\big\}$ by the assumed parabolic regularity leads us to the convergence of $(w_k-w)/\sm t_k\to u$ for some $u \in \R^n$ through passing to a convergent subsequence if it is necessary. This tells us that $t_k(w_k-w)-\sm t_k^2u=o(t_k^2)$, and so we arrive at
\begin{equation*}
\ox+t_kw+\sm t_k^2u+o(t_k^2)=\ox+t_kw_k\in\O\;\mbox{ for large }\;k\in\N,
\end{equation*}
which yields $u\in T^2_\O(\ox,w)$. Using this and the first condition in \eqref{pri4} together with the parabolic regularity of $\O$ at $\ox$ for $\ov$ brings us to the relationships
\begin{eqnarray*}\label{pri5}
\d^2\dd_\O(\bar x,\ov)(w)&=&\lim_{k\to \infty}\dfrac{\dd_\O(\ox+t_k w_k)-\dd_\O(\ox)-t_k\langle\ov,w_k\rangle}{\sm t_k^2}\\
&=&\lim_{k\to\infty}-\Big\la\ov,\dfrac{w_k-w}{\frac{1}{2}t_k}\Big\ra=-\la\ov,u\ra\ge-\sigma_{T^2_\O(\ox,w)}(\ov).
\end{eqnarray*}
Combining them with \eqref{pr2} verifies assertion (ii).

Turing to (iii), observe that the validity of \eqref{pri1} for $w\in K_\O(\ox,\ov)$ under the parabolic regularity of $\O$ at $\ox$ for $\ov$ was proved in (ii). To verify the opposite implication in (iii), suppose that \eqref{pri1} holds for all $w\in K_\O(\ox,\ov)$ and let $\d^2\dd_\O(\bar x,\ov)(w)<\infty$, i.e., $w\in\dom\d^2\dd_\O(\bar x,\ov)$. It follows from (i) that $w\in K_\O(\ox,\ov)$. Employing \cite[Proposition~13.64]{rw} yields
\begin{equation*}
\d^2\dd_\O(\bar x,\ov)(w)=-\sigma_{T^2_\O(\ox,w)}(\ov)=\liminf_{\substack{t\dn 0,\,u\to w\\
[u-w]/t\,\,{\ss\mbox{bounded}}}}\Delta_t^2\dd_\O(\ox,\ov)(u).
\end{equation*}
which shows that $\O$ is parabolically regular at $\ox$ for $\ov$ and hence completes the proof.
\end{proof}

When $\O$ is convex, the properness of $\d^2\dd_\O(\bar x,\ov)$ in Theorem~\ref{pri}(i) follows from \cite[Proposition~13.20(a)]{rw} since in this case we have $N_\O^p(\ox)=N_\O(\ox)$. The general nonconvex case of Theorem~\ref{pri} deals with normal vectors $\ov$ from the cone of proximal normals $N^p_\O(\ox)$, and it seems to be restrictive for some applications where we require parabolic regularity for all normal vectors from the basic normal cone $N_\O(\ox)$. This can be adjusted by narrowing our attention to some particular class of nonconvex sets for which we have $N^p_\O(\ox)=N_\O(\ox)$; see Proposition~\ref{norm} for a class of nonconvex sets enjoying this property. Recall that a set $\O\subset\R^n$ is called {\em prox-regular} at $\ox$ for $\ov$ with $(\ox,\ov)\in\gph N_\O$ if there exist $\ve>0$ and $r >0$ such that
\begin{equation}\label{proxr}
\la v,u-x\ra\le\hbox{${r\over 2}$}\|u-x\|^2\;\mbox{ whenever }\;(x,v)\in(\gph N_\O)\cap\B_\ve(\ox,\ov),\;u\in\O\cap\B_\ve(\ox).
\end{equation}
This notion was introduced in variational analysis by Poliquin and Rockafellar \cite{pr96}, but in fact it goes back to Federer \cite{fed} in geometric measure theory who called such sets as those with {\em positive reach}; see also \cite{col-thi} for further elaborations. Many important sets that are overwhelmingly encountered in variational analysis, optimization and their applications are prox-regular; see, e.g., \cite{col-thi,lev-pol-thi,loe,rw} and the references therein for more details.\vspace*{0.05in}

The obtained descriptions of parabolic regularity in Theorem~\ref{pri} help us to check that this fundamental property holds for many classes of sets important in applications. Let us start with {\em polyhedral convex sets}, which are intersections of finitely many half-spaces.

\begin{Example}[\bf parabolic regularity of polyhedral sets]\label{poly}{\rm Let $\O\subset\R^n$ be a polyhedral convex set with $\ox\in\O$, and let $\ov\in N_\O(\ox)$. We claim that $\O$ is parabolically regular at $\ox$ for $\ov\in N_\O(\ox)$. To check it, note first that $N_\O(\ox)=N^p_\O(\ox)$ by the convexity of $\O$ and deduce from \cite[Theorem~13.12]{rw} that $\O$ is parabolically derivable at $\ox$ for any vector $w\in T_\O(\ox)$. Thus we get by Theorem~\ref{pri}(i) that $\d^2\dd_\O(\bar x,\ov)(w)\ge 0$ for all $w\in\R^n$ and that $\dom\d^2\dd_\O(\bar x,\ov)=K_\O(\ox,\ov)$. Let us further show that
\begin{equation}\label{sdpo}
\d^2\dd_\O(\bar x,\ov)(w)=\dd_{K_\O(\ox,\ov)}(w)\;\mbox{ for all }\;w\in\R^n.
\end{equation}
To proceed, pick $w\in\dom\d^2\dd_\O(\bar x,\ov)$, which implies that $w\in T_\O(\ox)$. Appealing now to \cite[Exercise~6.47]{rw} ensures the existence of $\ve>0$ with $\ox+tw\in\O$ for all $t\in[0,\ve]$. Take a sequence $t_k\dn 0$ such that $t_k\in [0,\ve]$ and denote $w_k:=w$ for all $k\in\N$. Then we get
\begin{equation*}
0\le\d^2\dd_\O(\bar x,\ov)(w)\le\lim_{k\to\infty}\Delta_{t_k}^2\dd_\O(\ox,\ov)(w_k)=0,
\end{equation*}
which shows that $\Delta_{t_k}^2\dd_\O(\ox,\ov)(w_k)\to\d^2\dd_\O(\bar x,\ov)(w)$ as $k\to\infty$ and hence verifies \eqref{sdpo}. Since we obviously have \eqref{par-reg1} in this case, the parabolic regularity of the polyhedron $\O$ is verified.}
\end{Example}

\begin{Remark}[\bf parabolic regularity of unions of polyhedral sets]\label{poly-un} {\rm Arguing similarly to Example~\ref{poly}, we can show that if $\O$ is a finite union of polyhedral convex sets, then it is parabolically regular at $\ox\in \O$ for any $\ov\in N^p_\O(\ox)=\Hat N_\O(\ox)$. Observe that in this case we may have the {\em strict} inclusion $N^p_\O(\ox)\subset N_\O(\ox)$, and thus parabolic regularity is not achieved for any vector from the basic normal cone $N_\O(\ox)$.}
\end{Remark}

Other particular classes of parabolically regular sets are discussed below, where we also show that the property of parabolic regularity is preserved under various operations performed on sets.\vspace*{0.05in}

The next theorem reveals that the parabolic regularity of a closed set always yields the proper twice epi-differentiability of its indicator function with an explicit formula for computing the corresponding second subderivative.

\begin{Theorem}[\bf twice epi-differentiability from parabolic regularity]\label{pri92} Let $\O$ be a closed subset of $\R^n$ with $\ox\in\O$, and let $\ov\in N_{\O}^p(\ox)$. Assume further that  $\O$ is parabolically derivable at $\ox$ for every vector $w\in K_\O(\ox,\ov)$. If $\O$ is parabolically regular at $\ox$ for $\ov$, then it is properly twice epi-differentiable at $\ox$ for this normal vector and its second subderivative is computed by
\begin{equation*}
\d^2\dd_\O(\bar x,\ov)(w)=\left\{\begin{array}{ll}
-\sigma_{T^2_\O(\bar x,w)}(\ov)\;&\mbox{if }\;\;w\in K_\O(\ox,\ov),\\
\infty&\mbox{otherwise}.
\end{array}
\right.
\end{equation*}
\end{Theorem}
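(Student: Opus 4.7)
The formula claimed for $\d^2\dd_\O(\ox,\ov)$ is already a direct consequence of Theorem~\ref{pri}: part~(i) supplies properness together with $\dom\d^2\dd_\O(\ox,\ov)=K_\O(\ox,\ov)$, and part~(iii) (used in the direction asserting \eqref{pri1} under parabolic regularity) furnishes the identity $\d^2\dd_\O(\ox,\ov)(w)=-\sigma_{T_\O^2(\ox,w)}(\ov)$ for every $w\in K_\O(\ox,\ov)$. Hence the real substance of the theorem is the twice epi-differentiability itself. By the characterization recalled in \eqref{twice-epi}, the plan is to verify that for every $w\in\R^n$ and every $t_k\dn 0$ one can construct a sequence $w_k\to w$ with $\Delta_{t_k}^2\dd_\O(\ox,\ov)(w_k)\to\d^2\dd_\O(\ox,\ov)(w)$.

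The easy case is $w\notin K_\O(\ox,\ov)$: by the computed formula, $\d^2\dd_\O(\ox,\ov)(w)=\infty$, and since the second subderivative is defined as a liminf over arbitrary sequences $t\dn 0$, $u\to w$, this already forces $\liminf_k\Delta_{t_k}^2\dd_\O(\ox,\ov)(u_k)=\infty$ for every such pair of sequences; thus the constant choice $w_k:=w$ trivially works.

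The substantive case is $w\in K_\O(\ox,\ov)$, where the value $\d^2\dd_\O(\ox,\ov)(w)$ is finite. Here the plan is to combine Theorem~\ref{pri}(ii) with parabolic derivability to produce the desired approximating sequence explicitly. First, Theorem~\ref{pri}(ii) delivers a vector $u\in T_\O^2(\ox,w)$ that realizes the equality $\d^2\dd_\O(\ox,\ov)(w)=-\la\ov,u\ra$. Since $\O$ is assumed parabolically derivable at $\ox$ for $w$, this particular $u$ is generated by an arc, i.e., there exist $\ve>0$ and $\xi\colon[0,\ve]\to\O$ with $\xi(0)=\ox$, $\xi'_+(0)=w$, and $\xi''_+(0)=u$, so that
\[
\xi(t)=\ox+tw+\sm t^2u+o(t^2)\quad\mbox{ as }\;t\dn 0.
\]
Given any $t_k\dn 0$ (eventually in $[0,\ve]$), define $w_k:=(\xi(t_k)-\ox)/t_k$. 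Then $w_k\to w$ and $\ox+t_kw_k=\xi(t_k)\in\O$, so $\dd_\O(\ox+t_kw_k)=0$. Using $w\in\{\ov\}^\perp$ in the expansion $w_k=w+\sm t_ku+o(t_k)$ yields $t_k\la\ov,w_k\ra=\sm t_k^2\la\ov,u\ra+o(t_k^2)$, which gives
\[
\Delta_{t_k}^2\dd_\O(\ox,\ov)(w_k)=\frac{-t_k\la\ov,w_k\ra}{\sm t_k^2}\to-\la\ov,u\ra=\d^2\dd_\O(\ox,\ov)(w),
\]
completing this case.

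I do not anticipate a real obstacle here: the heavy lifting has already been done in Theorem~\ref{pri}, where the selection of a maximizing $u$ and the calculus identity for $\d^2\dd_\O(\ox,\ov)$ were established. The only thing that requires care is noting that Theorem~\ref{pri}(ii) provides, for each $w\in K_\O(\ox,\ov)$, a single $u$ whose associated arc $\xi$ can be used uniformly across all sequences $t_k\dn 0$; this is exactly what \eqref{twice-epi} demands, and combined with the properness from Theorem~\ref{pri}(i) it upgrades twice epi-differentiability to \emph{proper} twice epi-differentiability with the stated formula.
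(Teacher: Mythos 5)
Your proposal is correct and follows essentially the same route as the paper's proof: obtain the formula from Theorem~\ref{pri}(i),(iii), take the maximizing second-order tangent $u$ from Theorem~\ref{pri}(ii), use parabolic derivability to produce the arc $\xi$, set $w_k:=(\xi(t_k)-\ox)/t_k$, and exploit $\la\ov,w\ra=0$ to compute the limit of the difference quotients; the case $w\notin K_\O(\ox,\ov)$ is handled by the constant sequence exactly as in the paper. (Your citation of part~(ii) for the existence of $u$ is in fact the accurate one.)
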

\begin{proof} The second subderivative formula follows from Theorem~\ref{pri}(iii). To establish the claimed twice epi-differentiability,  pick any $w\in\dom\d^2\dd_\O(\bar x,\ox^*)=K_\O(\ox,\ov)$. Theorem~\ref{pri}(i) ensures the existence of $u\in T^2_\O(\ox,w)$ with $\d^2\dd_\O(\bar x,\ov)(w)=-\la\ov,u\ra$. Using the parabolic derivability of $\O$ at $\ox$ for $w$, we find a number $\ve>0$ and an arc $\xi\colon[0,\ve]\to\O$ satisfying
\begin{equation*}
\xi(0)=\ox, \;\xi'_{+}(0)=w,\;\mbox{ and }\;\xi''_{+}(0)=u.
\end{equation*}
Define now $w_t:=\disp\frac{\xi(t)-\xi(0)}{t}$ for all $t\in[0,\ve]$ and get $\ox+tw_t=\xi(t)\in\O$ whenever $t\in[0,\ve]$. Thus we have $w_t\to w$ as $t\dn 0$. On the other hand, we deduce from $w\in K_\O(\ox,\ov)$ that $\la\ov,w\ra=0$, which gives us in turn that
\begin{eqnarray*}
\Delta_t^2\dd_\O(\bar x,\ov)(w_t)&=&\dfrac{\dd_\O(\ox+tw_t)-\dd_\O(\ox)-t\langle\ov,w_t\rangle}{\sm t^2}\\
&=&-\Big\la\ov,\dfrac{\xi(t)-\xi(0)-t w}{\sm t^2}\Big\ra\to - \la\ov,u\ra=\d^2\dd_\O(\bar x,\ov)(w).
\end{eqnarray*}
This justifies \eqref{df02} when $w\in K_\O(\ox,\ov)$. If $w\notin K_\O(\ox,\ov)$, consider the arc $\xi(t):=\ox+tw$ for all $t>0$. We clearly have $\xi(0)=\ox$ and $\xi'_{+}(0)=w$. Put $w_t:=\dfrac{\xi(t)-\xi(0)}{t}=w$ and observe that
\begin{equation*}
\infty=\d^2\dd_\O(\bar x,\ov)(w)\le\liminf_{t\dn 0}\Delta_t^2\dd_\O(\bar x,\ov)(w_t)\le\limsup_{t\dn 0}\Delta_t^2\dd_\O(\bar x,\ov)(w_t)\le\infty=\d^2\dd_\O(\bar x,\ov)(w),
\end{equation*}
which verifies \eqref{df02} for such a vector $w$ and hence completes the proof of the theorem.
\end{proof}

To the best our knowledge, the above theorem is the first result in the literature establishing a {\em systematic approach} to verify twice epi-differentiability of set indicator functions via parabolic regularity. This approach allows us to justify in the next section the twice epi-differentiability of various important classes of nonpolyhedral sets that naturally and frequently appear in the framework of constrained optimization.

Recall here the result of \cite[Theorem~7.2]{bcs2} telling us that the projection mapping for a convex set, which is second-order order regular in the sense therein, is directionally differentiable. This result in combination with \cite[Corollary~13.43(c)]{rw} ensures that the indicator function of such a set is twice epi-differentiable. The only known fact concerning twice epi-differentiability of indicator functions for nonconvex sets was established in \cite[Corollary~13.43(d)]{rw} by showing that fully amenable sets enjoy this property.\vspace*{0.05in}

We conclude this section by revealing, via the usage of Theorem~\ref{pri92}, a connection between parabolic regularity of sets and the proto-differentiability property of the associated normal cone mappings. Recall that the normal cone mapping $N_\O$ is {\em proto-differentiable} at $\ox$ for $\ov\in N_\O(\ox)$ if the set $\gph N_\O$ is geometrically derivable at $(\ox,\ov)$. The proto-differentiability notion for set-valued mappings was introduced by Rockafellar in \cite{r89} and since that has drawn much attention in variational analysis and applications; see, e.g., the recent paper \cite{ab} and the references therein. As proved in \cite{r89}, the normal cone mappings associated with fully amenable sets are always proto-differentiable. We show in what follows that this result can be extended to a much broader class of parabolically regular sets.

To proceed in this direction, recall that a single-valued mapping $f\colon\R^n\to\R^m$ is {\em semidifferentiable} at $\ox\in\R^n$ if the limit
\begin{equation}\label{semid}
\lim_{\substack{t\dn 0\\u\to w}}\frac{f(\ox+tu)-f(\ox)}{t}
\end{equation}
exists for any $w\in\R^n$. It is easy to check that if $f$ is Lipschitz continuous around $\ox$, then its semidifferentiability  at this point is equivalent to its directional differentiability at $\ox$ in the classical sense, i.e., to the existence of the one-sided limit
\begin{equation*}
\lim_{t\downarrow 0}\frac{f(\ox+t w)-f(\ox)}{t}\;\mbox{ for all }\;w\in\R^n.
\end{equation*}

Now we are ready to establish the aforementioned result on the proto-differentiability of normal cone mappings associated with parabolically regular sets. Note that assertion (ii) of the following theorem concerns the graphical derivative \eqref{gder} of the normal cone mapping. This construction is a set specification of the primal-dual second-order generalized derivative for extended-real-valued functions, which is known in variational analysis and optimization as the {\em subgradient graphical derivative}; see, e.g. \cite{mor18}.

\begin{Theorem}[\bf proto-differentiability of normal cone mappings for parabolically regular sets]\label{proto} Let $\O$ be a closed subset of $\R^n$ with $\ox\in\O$, and let $\ov\in N_{\O}(\ox)$. Assume further that $\O$ is parabolically derivable at $\ox$ for every vector $w\in K_\O(\ox,\ov)$, and that $\O$ is prox-regular and parabolically regular at $\ox$ for $\ov$. Then the following equivalent conditions hold:

{\bf(i)} The indicator function $\dd_\O$ is twice epi-differentiable at $\ox$ for $\ov$.

{\bf(ii)} The normal cone mapping $N_\O$ is proto-differentiable at $\ox$ for $\ov$, and we have the subgradient graphical derivative representation
\begin{equation}\label{gdpd}
DN_\O(\ox,\ov)(w)=\sub\big(\sm\d^2\dd_\O(\ox,\ov)\big)(w)\;\mbox{ for all }\;w\in \R^n.
\end{equation}

{\bf(iii)} For any $r>0$ sufficiently small the projection mapping $P_\O$ is single-valued and semidifferentiable at $\ou:=\ox+r\ov$ with $DP_\O(\ou)(w)=\big(I+rDN_\O(\ox,\ov)\big)^{-1}(w)$ for all $w\in\R^n$.
\end{Theorem}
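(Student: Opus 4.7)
The plan is to first derive assertion (i) directly from the standing hypotheses via Theorem~\ref{pri92}, and then to establish the equivalence of (i), (ii), (iii).

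Since $\O$ is prox-regular at $\ox$ for $\ov$, Proposition~\ref{norm} (referenced earlier in the excerpt for the prox-regular setting) gives $N_\O^p(\ox)=N_\O(\ox)$, so in particular $\ov\in N_\O^p(\ox)$. Combined with the parabolic derivability along $K_\O(\ox,\ov)$ and the parabolic regularity assumed in the hypotheses, Theorem~\ref{pri92} yields assertion (i) together with the explicit formula for $\d^2\dd_\O(\ox,\ov)$.

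For (i) $\Leftrightarrow$ (ii), I would invoke an Attouch-type correspondence valid for prox-regular functions. The scaled subdifferentials of the quotients $\Delta_t^2\dd_\O(\ox,\ov)$ are $t^{-1}\bigl(N_\O(\ox+t\cdot)-\ov\bigr)$, so that twice epi-differentiability at $\ox$ for $\ov$ translates, under prox-regularity, into graphical convergence of the rescaled sets $t^{-1}\bigl(\gph N_\O-(\ox,\ov)\bigr)$ to $\gph DN_\O(\ox,\ov)$; this graphical convergence is precisely the geometric derivability of $\gph N_\O$ at $(\ox,\ov)$, i.e., proto-differentiability of $N_\O$. The formula \eqref{gdpd} then follows because the limit $\d^2\dd_\O(\ox,\ov)$ is positive homogeneous of degree $2$ by Proposition~\ref{ssp}(ii), and the subgradient graphical derivative of a prox-regular subgradient point is known to equal the subdifferential of one half of the second subderivative. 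This is the main obstacle of the proof: for a general l.s.c.\ function the passage from epi-convergence of second-order difference quotients to graphical convergence of the corresponding first-order subdifferentials fails in one direction, and prox-regularity is precisely what restores the equivalence, since $\dd_\O+\tfrac{r}{2}\|\cdot-\ox\|^2$ is locally convex for suitably large $r>0$ and so the classical Attouch theorem applies after this quadratic regularization.

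For (ii) $\Leftrightarrow$ (iii), I would use the classical Federer-type consequence of prox-regularity: for all sufficiently small $r>0$, the projection $P_\O$ is single-valued and Lipschitz continuous on a neighborhood of $\ou=\ox+r\ov$, and the resolvent identity $P_\O=(I+rN_\O)^{-1}$ holds there. Applying the graphical derivative to both sides then produces the desired formula $DP_\O(\ou)(w)=\bigl(I+rDN_\O(\ox,\ov)\bigr)^{-1}(w)$. For a locally Lipschitz single-valued mapping, semidifferentiability at $\ou$ is equivalent to its graphical derivative being a single-valued positively homogeneous continuous map, and via the resolvent identity this is in turn equivalent to proto-differentiability of $N_\O$ at $\ox$ for $\ov$. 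These last ingredients are standard once the subgradient-level graphical convergence established in the previous step is in hand.
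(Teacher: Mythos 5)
Your proposal is correct and follows essentially the same route as the paper: assertion (i) comes from Theorem~\ref{pri92} once $\ov\in N_\O^p(\ox)$ is secured from prox-regularity, (i)$\Leftrightarrow$(ii) is the Rockafellar--Wets correspondence between twice epi-differentiability and proto-differentiability for prox-regular functions (you sketch its Attouch-type proof where the paper simply cites \cite[Theorem~13.40]{rw}), and (ii)$\Leftrightarrow$(iii) uses the local resolvent identity $P_\O=(I+rN_\O)^{-1}$ together with the equivalence of semidifferentiability and proto-differentiability for Lipschitz single-valued maps, exactly as in the paper via \cite[Theorems~13.37 and Proposition~9.50]{rw}. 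One small correction: Proposition~\ref{norm} concerns constraint systems under MSCQ and is not the right reference for $\ov\in N_\O^p(\ox)$; that fact follows directly from the prox-regularity inequality \eqref{proxr} evaluated at $(x,v)=(\ox,\ov)$.
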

\begin{proof} The imposed prox-regularity of $\O$ ensures that $\ov\in N_\O^p(\ox)$. Hence assertion (i) is a direct consequence of Theorem~\ref{pri92}. Furthermore, it follows from \cite[Theorem~13.40]{rw} that the twice epi-differentiability of $\dd_\O$ is equivalent to the proto-differentiability of $N_\O$. This justifies the equivalence between (i) and (ii). Moreover, equality \eqref{gdpd} comes also from \cite[Theorem~13.40]{rw}.

To verify (iii), observe first that it follows from \cite[Theorem~13.37]{rw} that for any $r>0$ sufficiently small we find a neighborhood $U$ of $\ou=\ox+r\ov$ with
\begin{equation}\label{proj}
P_{\O}(u)=(I+rT)^{-1}(u)\quad \mbox{for all}\;\; u\in U,
\end{equation}
and that $P_\O(u)$ is single-valued for any $u\in U$, where $T$ stands for a graphical localization of $N_\O$ around $(\ox,\ov)$.  It is not hard to see that the latter equality for the projection mapping of $\O$ gives us the claimed formula for the graphical derivative of $P_\O$. We now proceed to show $P_\O$ is semidifferentiable at $\ou$.

To this end, since $N_\O$ is proto-differentiable at $\ox$ for $\ov$, so is $(I+rT)^{-1}$ at $\ou$ for $\ox$, which verifies this property for $P_\O$. Employing now \cite[Proposition~9.50]{rw} together with the Lipschitz continuity of $P_\O$ around $\ou$ (taken, e.g., from \cite[Proposition~13.37]{rw}) justifies that $P_\O$ is semidifferentiable at $\ou$. This shows that implication (ii)$\implies$(iii) holds. A similar argument as above via \eqref{proj} justifies  the opposite implication (iii)$\implies$(ii), which thus completes the proof of the theorem.
\end{proof}

Note finally that the last assertion (iii) of Theorem~\ref{proto} provides a far-going extension of a well known result for convex sets. Indeed, it is proved in \cite[Theorem~7.2]{bcs2} that the projection mapping associated with a second-order regular convex set is in fact directionally differentiable. As mentioned earlier, for Lipschitz continuous mappings the semidifferentiability and directional differentiability notions agree. Hence Theorem~\ref{proto}(iii) significantly extends the aforementioned result for convex sets to the general case of prox-regular sets under parabolic regularity.
Observe that when $\O$ is convex in Theorem~\ref{proto}, we can simply let $r=1$ in (iii).

\section{Second-Order Tangents under Metric Subregularity}\label{sect04}\sce

After revealing in the previous section general properties of parabolically regular sets and establishing close relations of parabolic regularity with second subderivatives and twice epi-differentiability of indicator functions, in what follows we intend to develop basic {\em calculus rules} ensuring the {\em preservation} of parabolic regularity under various operations on sets together with extensive {\em chain rules} for the corresponding constructions of {\em second-order generalized differentiation}. This would allow us, on the one hand, to largely extend the collection of sets that occur to be parabolically regular while, on the other hand, to derive new calculus rules of second-order generalized differentiation under the most appropriate qualification conditions.

The main class of sets $\O\subset\R^n$ under our subsequent consideration are represented locally in the following form referred to as {\em constraint systems}, which naturally appear, e.g., in  constrained optimization: there exist a neighborhood ${\cal O}$ of $\ox$, a single-valued mapping $f\colon\R^n\to\R^m$ twice differentiable at $\ox$, and a closed subset $\Th$ of $\R^m$ such that
\begin{equation}\label{CS}
\O\cap{\cal O}=\big\{x\in{\cal O}\big|\;f(x)\in\Th\big\}.
\end{equation}
Second-order variational analysis of the constraint systems from \eqref{CS} always requires some constraint qualifications. The following mild one is used throughout the rest of the paper.

\begin{Definition}[\bf metric subregularity constraint qualification]\label{defmscq} Let $\O$ be locally represented as \eqref{CS} around a point $\ox\in\O$. We say that the {\sc metric subregularity constraint qualification} $($MSCQ$)$ holds for $\O$ at $\ox$ with modulus $\kappa>0$ if the mapping $x\mapsto f(x)-\Th$ is metrically subregular at $(\ox,0)$ with this modulus.
\end{Definition}

Observe that MSCQ at $\ox$ with modulus $\kappa$ for the constraint system \eqref{CS} can be equivalently described as the existence of a neighborhood $U$ of $\ox$ such that the distance estimate
\begin{equation}\label{mscq}
{\rm dist}(x;\O)\le\kappa\,{\rm dist}\big(f(x);\Th\big)\;\mbox{ for all }\;x\in U
\end{equation}
holds. It is clear that MSCQ is strictly (may be very significantly) weaker than the {\em metric regularity  constraint qualification} (MRCQ) for $\O$ at $\ox$, which corresponds to Definition~\ref{defmscq} with the replacement  of the metric subregularity of the mapping $x\mapsto f(x)-\Th$ at $(\ox,0)$ by the metric regularity of this mapping around the reference point. In contrast to MSCQ, the latter MRCQ condition admits a {\em complete pointwise characterization} via the coderivative criterion \eqref{cod-cr}, which in the case of the mapping $x\mapsto f(x)-\Th$ can be equivalently written as
\begin{equation}\label{mrcq}
N_ \Th\big(f(\ox)\big)\cap\ker\nabla f(\ox)^*=\{0\}
\end{equation}
in terms of the basic normal cone \eqref{2.4}. This condition, known as  the {\em basic constraint qualification},  has been used in numerous aspects of variational analysis and constrained optimization; see, e.g., \cite{mor06,mor18,rw} with the commentaries and references therein. When $\Th$ and $f$ are given in particular settings, \eqref{mrcq} reduces to familiar classical forms of constraint qualifications, e.g., to MFCQ in NLPs, to Robinson's constraint qualification in conic programming, etc.

The  MSCQ, however, turns out to be more subtle and challenging, and so far  no pointwise characterization for this property has been achieved. In \cite{chi,chnt,gf,gm17,go16,hms,hjo,ho,mm,mms} and the bibliographies therein the reader can find a number of constructive sufficient conditions for the validity of MSCQ with their important applications. Mentioning this, we are positive that MSCQ has strong potential for further developments and applications in variational analysis and optimization. New second-order ones are presented below in rather general settings.\vspace*{0.05in}

The main attention of this section is to establish the {\em preservation of parabolic derivability} for constraint systems via a chain rule for second-order tangent sets under MSCQ. This will be strongly used in the subsequent material. To begin, we recall the required first-order chain rules for tangents and normals to nonconvex sets under MSCQ. Both chain rules presented in the following proposition are consequences of essentially more general ones from \cite{mms} (Theorems~3.3 and 3.5, respectively), where the reader can find references to previous results in this direction.

\begin{Proposition}[\bf chain rules for first-order tangents and normals]\label{2chin} Let $\O$ be taken from \eqref{CS}, and let $\ox\in\O$ with $f(\ox)\in\Th$. If MSCQ holds for $\O$ at $\ox$ and if $\Th$ is normally regular at $f(\ox)$, then we have the equalities
\begin{equation}\label{first}
T_\O(\ox)=\big\{w \in\R^n\big|\;\nabla f(\ox)w\in T_\Th\big(f(\ox)\big)\big\}\;\mbox{ and }\;N_\O(\ox)=\widehat{N}_\O(\ox)=\nabla f(\ox)^*N_\Th\big(f(\ox)\big).
\end{equation}
\end{Proposition}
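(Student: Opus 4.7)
My plan is to establish the tangent-cone formula first and then deduce the regular-normal formula from it by polar duality, and finally upgrade to the basic-normal formula using the outer-limit definition together with the local/robust nature of MSCQ.

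For the tangent-cone equality the inclusion ``$\subset$'' follows from a routine first-order Taylor expansion of the smooth map $f$: if $t_k\dn 0$ and $w_k\to w$ with $\ox+t_kw_k\in\O$, then $f(\ox)+t_k\nabla f(\ox)w_k+o(t_k)=f(\ox+t_kw_k)\in\Th$, and dividing by $t_k$ and passing to the limit in \eqref{tan} yields $\nabla f(\ox)w\in T_\Th(f(\ox))$. For the reverse inclusion I would fix $w$ with $\nabla f(\ox)w\in T_\Th(f(\ox))$ and pick, by definition of the tangent cone, $t_k\dn 0$ and $v_k\to\nabla f(\ox)w$ with $f(\ox)+t_kv_k\in\Th$. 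Setting $u_k:=\ox+t_kw$, the expansion of $f$ together with this inclusion gives
\[
\dist\big(f(u_k);\Th\big)\le\|f(u_k)-(f(\ox)+t_kv_k)\|=t_k\|\nabla f(\ox)w-v_k\|+o(t_k)=o(t_k).
\]
By MSCQ \eqref{mscq} this forces $\dist(u_k;\O)=o(t_k)$, so I can pick $x_k\in\O$ with $\|x_k-u_k\|=o(t_k)$; then $w_k:=(x_k-\ox)/t_k\to w$ and $\ox+t_kw_k=x_k\in\O$, giving $w\in T_\O(\ox)$.

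The regular-normal formula then follows by polarity: $\Hat N_\O(\ox)=T_\O(\ox)^*$, and the tangent-cone identity just proved expresses $T_\O(\ox)$ as the preimage $\nabla f(\ox)^{-1}(T_\Th(f(\ox)))$ of a closed cone under a linear map. Standard polar calculus yields $T_\O(\ox)^*=\cl(\nabla f(\ox)^*T_\Th(f(\ox))^*)$, and normal regularity of $\Th$ at $f(\ox)$ replaces $T_\Th(f(\ox))^*=\Hat N_\Th(f(\ox))$ by $N_\Th(f(\ox))$. To drop the closure I would show that $\nabla f(\ox)^*N_\Th(f(\ox))$ is already closed, which rests on a uniform bound of the form $\|\lambda\|\le C\|\nabla f(\ox)^*\lambda\|$ on the relevant cone of multipliers $\lambda\in N_\Th(f(\ox))$; this bound is exactly what MSCQ \eqref{mscq} produces through a scaling/contradiction argument. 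For the second equality $N_\O(\ox)=\Hat N_\O(\ox)$ I would appeal to \eqref{2.4}: MSCQ persists at every $\O$-nearby point (with a uniform modulus), so the regular-normal representation holds throughout a neighborhood of $\ox$. Given $v\in N_\O(\ox)$ with defining sequences $x_k\stackrel{\O}{\to}\ox$ and $v_k\to v$, $v_k=\nabla f(x_k)^*\lambda_k$, $\lambda_k\in N_\Th(f(x_k))$, the uniform multiplier bound lets me extract a convergent subsequence $\lambda_k\to\lambda\in N_\Th(f(\ox))$ (using closedness of $N_\Th$ at $f(\ox)$ granted by normal regularity), and hence $v=\nabla f(\ox)^*\lambda\in\Hat N_\O(\ox)$.

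The main obstacle, and the step that carries the genuine weight of the argument, is the uniform control of multipliers under MSCQ alone. Under the stronger MRCQ~\eqref{mrcq} this would be immediate from the coderivative criterion \eqref{cod-cr}, but MSCQ admits no such pointwise characterization, and the existence and boundedness of multipliers must be extracted from the Lipschitzian error bound \eqref{mscq} itself. I would therefore either invoke the more general chain rules \cite[Theorems~3.3 and 3.5]{mms} directly, as the authors suggest, or reproduce their strategy, which combines a scaling argument on the multiplier sequence with the contingent-cone structure of $T_\Th$ to convert the distance estimate \eqref{mscq} into the required multiplier bound.
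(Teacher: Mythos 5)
Your tangent-cone argument is correct and complete, and your fallback of simply invoking \cite[Theorems~3.3 and 3.5]{mms} coincides with what the paper actually does: the authors give no independent proof of this proposition but present both formulas as consequences of those external results. The genuine gap is in your self-contained route to the normal-cone formulas. The inequality you isolate as the crux, $\|\lambda\|\le C\,\|\nabla f(\ox)^*\lambda\|$ for all $\lambda$ in the cone $N_\Th(f(\ox))$, is equivalent (by the usual normalization--compactness argument on the unit sphere of that closed cone) to $N_\Th(f(\ox))\cap\ker\nabla f(\ox)^*=\{0\}$, which is precisely the basic constraint qualification \eqref{mrcq}. That condition is strictly stronger than MSCQ, and the paper stresses (see the discussion preceding Proposition~\ref{sm}) that under MSCQ alone the multiplier set $\Lambda(\ox,\ov)$ may be unbounded; so the bound you hope to extract from \eqref{mscq} by a scaling/contradiction argument is false in the intended generality, and no normalization of an unbounded multiplier sequence can deliver it. What MSCQ actually yields---and what \cite[Theorem~3.5]{mms} and, in the second-order setting, Proposition~\ref{dua}(ii) of this paper establish---is the weaker statement that for each $v\in\Hat N_\O(\ox)$ the set $\Lambda(\ox,v)\cap(\kappa\|v\|\B)$ is nonempty: \emph{some} multiplier of controlled norm exists even though others may blow up. That existence statement suffices both to remove the closure from $\cl\big(\nabla f(\ox)^*N_\Th(f(\ox))\big)$ and to pass to the limit in \eqref{2.4}, but it is obtained by exact penalization of the error bound \eqref{mscq} (observing that $\ox$ locally minimizes $-\la v,x\ra+\kappa(\|v\|+\ve)\,{\rm dist}(f(x);\Th)$ up to $o(\|x-\ox\|)$ and applying subdifferential calculus to the Lipschitz composite), not by the contradiction argument you describe. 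This penalization step is where the entire weight of the proposition sits, and your outline does not supply it.

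A secondary issue: in the limiting-normal-cone step you apply the regular-normal representation at points $x_k\in\O$ near $\ox$, which presupposes that $f$ is differentiable at each $x_k$ and that $\Th$ is normally regular at $f(x_k)$, whereas the hypotheses grant these only at $\ox$ and $f(\ox)$. This is another reason the paper routes the equality $N_\O(\ox)=\Hat N_\O(\ox)$ through \cite[Theorem~3.5]{mms} rather than through a pointwise propagation of the Fr\'echet formula.
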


To proceed with our tangential second-order analysis, we use the first equality in \eqref{first} and for each $w\in\R^n$ satisfying $\nabla f(\ox)w\in T_\Th(f(\ox))$ define the parameterized set-valued mapping $S_w\colon\R^m\tto\R^n$ involving the second-order tangent set \eqref{2tan} by
\begin{equation}\label{mapt}
S_w(p):=\big\{u\in\R^n\big|\;\nabla f(\bar x)u+\nabla^2f(\bar x)(w,w)+p\in T^2_\Th\big(f(\bar x),\nabla f(\ox)w\big)\big\}.
\end{equation}
This mapping describes a canonically perturbed {\em second-order tangential approximation} of the constraint system \eqref{CS}. The next result of its own interest proves under MSCQ the {\em uniform outer/upper Lipschitz} property of \eqref{mapt} in the sense of Robinson \cite{rob} broadly employed below.

\begin{Theorem}[\bf uniform outer Lipschitzian property of second-order tangential approximations]\label{fors} Let $\O$ be a constraint system represented by \eqref{CS} around $\ox\in\O$, and let $w\in\R^n$ be such that $\nabla f(\ox)w\in T_\Th(f(\ox))$. Assume that MSCQ holds for $\O$ at $\ox$ with modulus $\kappa>0$.   Then the approximating mapping \eqref{mapt} satisfies the inclusion
\begin{equation}\label{tlip}
S_w(p)\subset S_w(0)+\kappa\|p\|\B\;\mbox{ for all }\;p\in\R^m\;\mbox{ uniformly in }\;w,
\end{equation}
which means the uniform outer Lipschitzian property of $S_w$ at the origin.
\end{Theorem}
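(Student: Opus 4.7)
The plan is to pick $u\in S_w(p)$ (if $S_w(p)=\emp$ the inclusion is trivial) and construct $u'\in S_w(0)$ with $\|u'-u\|\le\kappa\|p\|$ by using MSCQ to project the natural ``second-order curve'' through $u$ back onto $\O$. Setting $q:=\nabla f(\ox)u+\nabla^2f(\ox)(w,w)+p$, membership of $u$ in $S_w(p)$ amounts to $q\in T^2_\Th(f(\ox),\nabla f(\ox)w)$, so by \eqref{2tan} there exist $t_k\dn 0$ and $q_k\to q$ with $f(\ox)+t_k\nabla f(\ox)w+\sm t_k^2 q_k\in\Th$ for all large $k\in\N$.

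Next, I would introduce the test points $y_k:=\ox+t_kw+\sm t_k^2 u$, which converge to $\ox$ and hence eventually lie in the neighborhood $U$ from \eqref{mscq}. Twice differentiability of $f$ at $\ox$ yields
\begin{equation*}
f(y_k)=f(\ox)+t_k\nabla f(\ox)w+\sm t_k^2\bigl[\nabla f(\ox)u+\nabla^2f(\ox)(w,w)\bigr]+o(t_k^2)=f(\ox)+t_k\nabla f(\ox)w+\sm t_k^2(q-p)+o(t_k^2),
\end{equation*}
and comparison with the feasible vector $f(\ox)+t_k\nabla f(\ox)w+\sm t_k^2 q_k\in\Th$ produces the distance bound
\begin{equation*}
\dist\bigl(f(y_k);\Th\bigr)\le\sm t_k^2\,\|q_k-(q-p)\|+o(t_k^2)\le\sm t_k^2\bigl(\|p\|+\ve_k\bigr)\;\;\mbox{with}\;\;\ve_k\dn 0.
\end{equation*}
Applying \eqref{mscq} and closedness of $\O$, I can pick $x_k\in P_\O(y_k)$, giving $\|x_k-y_k\|=\dist(y_k;\O)\le\sm\kappa t_k^2(\|p\|+\ve_k)$; writing $x_k=\ox+t_k w+\sm t_k^2(u+\zeta_k)$ then forces $\|\zeta_k\|\le\kappa(\|p\|+\ve_k)$.

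After passing to a subsequence, $\zeta_k\to\zeta$ with $\|\zeta\|\le\kappa\|p\|$, and I set $u':=u+\zeta$. Taylor expanding $f$ at $\ox$ along $x_k=\ox+h_k$ with $h_k:=t_k w+\sm t_k^2(u+\zeta_k)$ (so that $\|h_k\|^2=t_k^2\|w\|^2+O(t_k^3)$) yields
\begin{equation*}
f(x_k)=f(\ox)+t_k\nabla f(\ox)w+\sm t_k^2\bigl[\nabla f(\ox)(u+\zeta_k)+\nabla^2 f(\ox)(w,w)+o(1)\bigr]\in\Th,
\end{equation*}
in which the bracketed vector converges to $\nabla f(\ox)u'+\nabla^2f(\ox)(w,w)$. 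By \eqref{2tan} this limit lies in $T^2_\Th(f(\ox),\nabla f(\ox)w)$, so $u'\in S_w(0)$ with $\|u'-u\|\le\kappa\|p\|$, as required. The uniformity in $w$ is automatic, since $\kappa$ is the MSCQ modulus and depends only on $(\O,f,\Th,\ox)$. The main technical point is bookkeeping of remainders: one must verify that the Peano remainder $o(\|h_k\|^2)$ and the cubic-in-$t_k$ contribution of $\nabla^2 f(\ox)$ both absorb into $\sm t_k^2\cdot o(1)$, and this hinges on the boundedness of $\{\zeta_k\}$ delivered by MSCQ, which is exactly what lets the second-order tangency pass to the limit.
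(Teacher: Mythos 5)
Your proof is correct and follows essentially the same route as the paper's: both arguments use the definition of the second-order tangent set to produce points of $\Th$ along the parabolic curve, invoke MSCQ to project $\ox+t_kw+\sm t_k^2u$ back onto $\O$ with error of order $t_k^2\|p\|$, extract a convergent subsequence of the rescaled corrections, and confirm via a second Taylor expansion that the corrected direction lies in $S_w(0)$. The only differences are notational (your $u+\zeta$ versus the paper's $u-2d$), so no further comparison is needed.
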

\begin{proof}
Fixing some $p\in\R^n$ and $u\in S_w(p)$, we get by \eqref{mapt} that
\begin{equation*}
\nabla f(\bar x)u+\nabla^2f(\bar x)(w,w)+p\in T^2_\Th\big(f(\bar x),\nabla f(\ox)w\big).
\end{equation*}
We deduce from definition \eqref{2tan} of second-order tangents that there exists a sequence $t_k\dn 0$ with
\begin{equation*}
f(\ox)+t_k\nabla f(\ox)w+\sm t_k^2\big(\nabla f(\bar x)u+\nabla^2f(\bar x)(w,w)+p\big)+o(t^2_k)\in\Th,\quad k\in\N.
\end{equation*}
For any $k$ sufficiently large we get by the twice differentiability of $f$ at $\ox$ that
\begin{equation*}
f\big(\ox+t_k w+\sm t_k^2u\big)=f(\ox)+t_k\nabla f(\ox)w+\sm t_k^2\big(\nabla f(\ox)u+\nabla^2f(\bar x)(w,w)\big)+o(t_k^2),
\end{equation*}
which in turn implies via MSCQ \eqref{mscq} that
\begin{eqnarray*}\label{pl01}
{\rm dist}\big(\ox+t_k w+\sm t_k^2u;\O\big)&\le&\kappa\,{\rm dist}\big(f(\ox+t_k w+\sm t_k^2u);\Th\big)\\
&\le&\frac{1}{2}\kappa t_k^2\Big(\|p\|+\frac{o(t_k^2)}{t_k^2}\Big).
\end{eqnarray*}
Thus there exists a vector $y_k\in\O$ satisfying
\begin{equation*}
\|d_k\|\le\frac{1}{2}\kappa\Big(\|p\|+\frac{o(t_k^2)}{t_k^2}\Big)\;\mbox{ with }\;d_k:=\frac{\ox+t_k w+\sm t_k^2u-y_k}{t_k^2}.
\end{equation*}
Passing to a subsequence if necessary ensures the existence of $d\in\R^n$ such that $d_k\to d$ as $k\to\infty$. This yields the estimate
\begin{equation}\label{pl02}
\|d\|\le\sm\kappa\|p\|.
\end{equation}
On the other hand, we can suppose without loss of generality that $\ox+t_k w+\sm  t_k^2 u-t_k^2 d_k=y_k\in\O\cap{\cal O}$ for $k$ sufficiently large, and hence it follows from \eqref{CS} that  $f(\ox+t_k w+\frac{1}{2}t_k^2u-t_k^2 d_k)\in\Th$. Taking into account the representation
\begin{equation*}
f\big(\ox+t_k w+\sm t_k^2u-t_k^2 d_k\big)=f(\ox)+t_k\nabla f\big(\ox)w+\sm t_k^2\big(\nabla f(\ox)(u-2d_k)+\nabla^2f(\bar x)(w,w)\big)+o(t_k^2),
\end{equation*}
we readily arrive at the inclusion
\begin{equation*}
f(\ox)+t_k\nabla f(\ox)w+\frac{1}{2}t_k^2\Big(\nabla f(\ox)(u-2d_k)+\nabla^2f(\bar x)(w,w)+\frac{2 o(t_k^2)}{t_k^2}\Big)\in\Th,
\end{equation*}
which in turn implies that $\nabla f(\ox)(u-2d)+\nabla^2f(\bar x)(w,w)\in T^2_\Th(f(\bar x),\nabla f(\ox)w)$. The latter reads as $u-2d\in S_w(0)$, which
together with \eqref{pl02} justifies the claimed inclusion \eqref{tlip} that gives us the uniform outer Lipschitzian property of the mapping $S_w$ from \eqref{mapt} at $p=0$.
\end{proof}

Let us make some comments to the second-order result obtained in Theorem~\ref{fors}.

\begin{Remark}[\bf discussions on the outer Lipschitzian property]\label{olp-comm}{\rm The following hold:

{\bf(i)} The result of Theorem~\ref{fors} reduces to \cite[Proposition~3.1]{gm17} in the case where the set $\Th$ is a closed convex cone and $f(\ox)=0$; neither of these conditions is in our assumptions. Indeed, we can easily observe that the assumptions of \cite{gm17} ensure that $T^2_\Th(f(\bar x),\nabla f(\ox)w)= T_\Th(\nabla f(\ox)w)$, which allows us to derive the result of  \cite[Proposition~3.1]{gm17} from Theorem~\ref{fors}.

{\bf(ii)} Although Theorem~\ref{fors} is verified for vectors $w\in\R^n$ with $\nabla f(\ox) w\in T_\Th(f(\ox))$, it is clear that the outer Lipschitzian property \eqref{tlip} holds in fact for all vectors $w\in\R^n$. To check  \eqref{tlip} for $w$ with $\nabla f(\ox)w\not\in T_\Th(f(\ox))$, we observe directly from the definition that  $T^2_\Th(f(\bar x),\nabla f(\ox)w )=\emp$ and hence $S_w(p)=\emp$ for all $p\in\R^m$. This clearly yields \eqref{tlip}.

{\bf(iii)} Note finally that Theorem~\ref{fors} implies the outer Lipschitzian property of the mapping
\begin{equation*}
p\mapsto\big\{w\in\R^n\big|\;\nabla f(\bar x)w+p\in T_\Th\big(f(\bar x)\big)\big\}.
\end{equation*}
This can be easily deduced from Theorem~\ref{fors} by letting $w=0\in\R^n$ and by observing that $T^2_\Th(f(\bar x),0)=T_\Th(f(\bar x))$. This was already observed at  \cite[Proposition~2.1]{gf}.}
\end{Remark}

We are now ready to provide an application of Theorem~\ref{fors} to establishing the parabolic derivability of constraint systems \eqref{CS} via a chain rule for second-order tangent sets under MSCQ \eqref{mscq}. Such a chain rule for \eqref{CS} was obtained in \cite[Proposition~13.13]{rw} and also in \cite[Proposition~3.33]{bs} under the much stronger metric regularity condition for the mapping $x\mapsto f(x)-\Th$ around $(\ox,0)$. Furthermore, the latter result requires the ${\cal C}^2$-smooth property of $f$ around $\ox$, which we replace by the twice differentiability of $f$ at $\ox$ under MSCQ.

\begin{Theorem}[\bf parabolic derivability of constraint systems]\label{chain} Let $\O$ admit representation \eqref{CS} around $\ox\in\O$, let MSCQ \eqref{mscq} hold for $\O$ at $\ox$ with modulus $\kappa>0$, and let $\Th$ be normally regular at $f(\ox)$. Then for all $w\in T_\O(\ox)$ we have the second-order tangent chain rule
\begin{equation}\label{chr2}
u\in T^2_\O(\ox,w)\Longleftrightarrow\nabla f(\ox)u+\nabla^2f(\bar x)(w,w)\in T^2_\Th\big(f(\bar x),\nabla f(\ox)w\big).
\end{equation}
If furthermore the set $\Th$ is parabolically derivable at $f(\ox)$ for $\nabla f(\ox)w$, then the constraint system \eqref{CS} is parabolically derivable at $\ox$ for $w$.
\end{Theorem}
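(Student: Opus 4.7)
The plan is to establish the chain rule \eqref{chr2} in both directions and then deduce parabolic derivability of $\O$ from that of $\Th$. The forward implication ($\Rightarrow$) is a direct Taylor computation: given $u\in T^2_\O(\ox,w)$ with witnessing sequences $t_k\dn 0$ and $u_k\to u$ satisfying $\ox+t_kw+\sm t_k^2 u_k\in\O$, representation \eqref{CS} gives $f(\ox+t_kw+\sm t_k^2u_k)\in\Th$, and expanding via twice differentiability of $f$ at $\ox$ yields
\begin{equation*}
f(\ox+t_kw+\sm t_k^2u_k)=f(\ox)+t_k\nabla f(\ox)w+\sm t_k^2\big(\nabla f(\ox)u_k+\nabla^2 f(\ox)(w,w)\big)+o(t_k^2).
\end{equation*}
Feeding this into definition \eqref{2tan} applied to $\Th$ at $f(\ox)$ in direction $\nabla f(\ox)w$ produces $\nabla f(\ox)u+\nabla^2 f(\ox)(w,w)\in T^2_\Th(f(\ox),\nabla f(\ox)w)$.

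The reverse implication ($\Leftarrow$) is where MSCQ does the work. Given $u\in\R^n$ with $v:=\nabla f(\ox)u+\nabla^2 f(\ox)(w,w)\in T^2_\Th(f(\ox),\nabla f(\ox)w)$, extract from \eqref{2tan} sequences $t_k\dn 0$ and $v_k\to v$ with $f(\ox)+t_k\nabla f(\ox)w+\sm t_k^2 v_k\in\Th$. The same Taylor expansion shows that $f(\ox+t_kw+\sm t_k^2u)$ differs from $f(\ox)+t_k\nabla f(\ox)w+\sm t_k^2 v_k$ by $\sm t_k^2(v-v_k)+o(t_k^2)=o(t_k^2)$, so ${\rm dist}(f(\ox+t_kw+\sm t_k^2u);\Th)=o(t_k^2)$. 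MSCQ \eqref{mscq} then yields $y_k\in\O$ with $\|\ox+t_kw+\sm t_k^2u-y_k\|=o(t_k^2)$; writing $y_k=\ox+t_kw+\sm t_k^2u_k$ gives $u_k\to u$ and hence $u\in T^2_\O(\ox,w)$.

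For the parabolic derivability statement, nonemptiness of $T^2_\O(\ox,w)$ comes from Theorem~\ref{fors}: any $v\in T^2_\Th(f(\ox),\nabla f(\ox)w)$ gives $0\in S_w(p)$ with $p:=v-\nabla^2 f(\ox)(w,w)$, and the uniform outer Lipschitzian estimate \eqref{tlip} produces $u^*\in S_w(0)$ with $\|u^*\|\le\kappa\|p\|$, which by \eqref{chr2} lies in $T^2_\O(\ox,w)$. For each $u\in T^2_\O(\ox,w)$, set $v:=\nabla f(\ox)u+\nabla^2 f(\ox)(w,w)\in T^2_\Th(f(\ox),\nabla f(\ox)w)$ and invoke parabolic derivability of $\Th$ to obtain an arc $\eta\colon[0,\ve]\to\Th$ with $\eta(0)=f(\ox)$, $\eta'_+(0)=\nabla f(\ox)w$, and $\eta''_+(0)=v$; thus $\eta(t)=f(\ox)+t\nabla f(\ox)w+\sm t^2 v+o(t^2)$. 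With $x(t):=\ox+tw+\sm t^2 u$, Taylor expansion at $\ox$ gives $\|f(x(t))-\eta(t)\|=o(t^2)$, hence ${\rm dist}(f(x(t));\Th)=o(t^2)$. Apply MSCQ pointwise to select $\xi(t)\in\O$ with $\|x(t)-\xi(t)\|\le\kappa\,{\rm dist}(f(x(t));\Th)=o(t^2)$ for each $t\in(0,\ve]$ (and $\xi(0):=\ox$); then $\xi(t)=\ox+tw+\sm t^2u+o(t^2)$, so $\xi'_+(0)=w$ and $\xi''_+(0)=u$, as required.

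The main technical obstacle is common to the $(\Leftarrow)$ direction and the arc construction: both require transferring an $o(t^2)$-deviation from $\Th$ on the image side back to an $o(t^2)$-deviation from $\O$ on the preimage side. Linearization via $\nabla f(\ox)$ alone cannot absorb a genuine second-order residual, so MSCQ supplying the estimate ${\rm dist}(\,\cdot\,;\O)\le\kappa\,{\rm dist}(f(\,\cdot\,);\Th)$ is precisely the ingredient that closes the gap; the rest is careful bookkeeping of Taylor remainders.
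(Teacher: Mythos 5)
Your proof is correct. The only place where the paper supplies a genuinely new argument---the nonemptiness of $T^2_\O(\ox,w)$ via the uniform outer Lipschitzian property of $S_w$ from Theorem~\ref{fors}---you reproduce in essentially the same way (you take $u=0$ in $S_w(p)$ where the paper perturbs an arbitrary $u$; this is immaterial). Where you genuinely diverge is in the other two ingredients: the paper does not actually prove the chain rule \eqref{chr2} or the arc-realizability of each $u\in T^2_\O(\ox,w)$, but instead asserts that the proof of \cite[Proposition~13.13]{rw}, written under metric regularity, ``actually utilizes merely MSCQ.'' You give self-contained arguments---the forward implication of \eqref{chr2} by Taylor expansion, the reverse implication by converting the image-side residual $\sm t_k^2(v-v_k)+o(t_k^2)$ into a preimage-side $o(t_k^2)$ correction via \eqref{mscq}, and the arc for a given $u$ by comparing $f(x(t))$ with the parabolic arc $\eta(t)$ in $\Th$ and then passing back to $\O$ through MSCQ. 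This buys an explicit record of exactly where MSCQ enters, which the paper leaves implicit; as a byproduct your chain-rule argument never invokes the normal regularity of $\Th$ at $f(\ox)$, which the paper carries as a hypothesis. Two pieces of bookkeeping are worth stating: the points $y_k$ and $\xi(t)$ need not attain ${\rm dist}(\,\cdot\,;\O)$ exactly, so select them within ${\rm dist}(\,\cdot\,;\O)+t^3$, which is still $o(t^2)$; and \eqref{mscq} holds only on the neighborhood $U$, so restrict to $t$ small enough that $x(t)\in U$. Neither affects the argument.
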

\begin{proof} By a close look at the proof of \eqref{chr2}, which was given in \cite[Proposition~13.13]{rw} under the metric regularity property of the mapping $x\mapsto f(x)-\Th$ around $(\ox,0)$, we can observe that it actually utilizes merely MSCQ at this point.

To verify the claimed parabolic derivability of the constraint system \eqref{CS} under the assumptions made, pick any $w\in T_\O(\ox)$ and recall that $T^2_\O(\ox,w)$ in \eqref{2tan} can be reformulated via the outer limit \eqref{pk} of the sets $(\O-\ox-tw)/\sm t^2$ as $t\dn 0$. The first requirement of parabolic derivability is to show that this outer limit is actually achieved as the {\em full set limit} meaning that the outer and inner limits agree. This again
can be done by following the proof of \cite[Proposition~13.13]{rw}, which basically works under MSCQ. The second requirement of parabolic derivability is crucial: to show that $T^2_\O(\ox,w)\ne\emp$ for any tangent vector $w\in T_\O(\ox)$. The proof of the latter fact given in \cite{rw} heavily exploits the metric regularity of the constraint mapping and does not hold under MSCQ. Now we provide a new proof for this property, which needs merely MSCQ.

To proceed, employ the imposed parabolic derivability of $\Th$ at $f(\ox)$ for $\nabla f(\ox)w$ to conclude that $T^2_\Th(f(\bar x),\nabla f(\ox)w)\ne\emp$. Picking $z\in T^2_\Th(f(\bar x),\nabla f(\ox)w)$ gives us the inclusion
\begin{equation*}
\nabla f(\ox)u+\nabla^2f(\bar x)(w,w)+p\in T^2_\Th\big(f(\bar x),\nabla f(\ox)w\big)\;\mbox{ with }\;p:=z-\nabla f(\ox)u-\nabla^2f(\bar x)(w,w),
\end{equation*}
which can be equivalently expressed as $u\in S_w(p)$ via the mapping $S_w$ from \eqref{mapt}. Now we apply Theorem~\ref{fors} and deduce from the outer Lipschitzian property \eqref{tlip} that there exists a vector $\tilde{u}\in S_w(0)$ such that $\|u-\tilde{u}\|\le\kappa\|p\|$. This tells us that
\begin{equation*}
\nabla f(\ox)\tilde{u}+\nabla^2f(\bar x)(w,w)\in T^2_\Th\big(f(\bar x),\nabla f(\ox)w\big).
\end{equation*}
Using the chain rule \eqref{chr2} leads us to $\tilde{u}\in T^2_\O(\ox,w)$, which verifies the nonemptiness of the second-order tangent set $T^2_\O(\ox,w)$ and thus completes the proof of the theorem.
\end{proof}

\section{Second Subderivatives under Parabolic Regularity}\sce\label{sect05}

This section is devoted to the study of the second subderivative \eqref{ssd} for the indicator functions $\dd_\O$ of parabolically regular constraint systems \eqref{CS}. The main goals here are the following:
\begin{itemize}[noitemsep]
\item To compute the second subderivative of $\dd_\O$ when $\Th$ in \eqref{CS} is parabolically regular.
\item To show that $\dd_\O$ is twice epi-differentiable when $\Th$ in \eqref{CS} is parabolically regular.
\end{itemize}

The obtained results have many consequences in what follows. In particular, they are instrumental for deriving rules for the preservation of parabolic regularity under major operations on sets; we label such rules as {\em calculus of parabolic regularity}. This calculus allows us to establish parabolic regularity for important classes of constraint systems that overwhelmingly encountered in variational analysis and optimization.

To achieve these goals, we begin with a simple albeit useful technical result. Recall \cite[p.\ 322]{rw} that a function $\ph\colon\R^n\to\oR$ is said to be {\em calm at $\ox$ from below} with constant $\ell\ge 0$ if $\ph(\ox)$ is finite and there exists a neighborhood $U$ of $\ox$ such that
\begin{equation}\label{calmb}
\ph(x)\ge\ph(\ox)-\ell\,\|x-\ox\|\;\mbox{ for all }\;x\in U.
\end{equation}
As proved in \cite[Propositon~8.32]{rw}, $\ph$ is calm at $\ox$ from below if and only if $\d\ph(\ox)(0)=0$, or equivalently $\d\ph(\ox)(w)>-\infty$ for all $w\in\R^n$. Furthermore, it is shown therein that for functions $\ph$, which are l.s.c.\ around $\ox$ and such that $\epi\ph$ is normally regular at $(\ox,\ph(\ox))$, their calmness at $\ox$ from below amounts to saying that $\sub\ph(\ox)\ne\emp$. Now we recover this result for convex functions using a different approach and show that the lower semicontinuity can be dropped.

\begin{Proposition}[\bf subdifferentiability of calm convex functions]\label{nemp} Let $\ph\colon\R^n\to\oR$ be convex and calm at $\ox$ from below with some constant $\ell\ge 0$. Then there exists a subgradient $\ov\in\sub\ph(\ox)$ such that $\|\ov\|\le\ell$.
\end{Proposition}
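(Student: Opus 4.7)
The plan is to convert the local calmness condition into a global minimization statement by absorbing the Lipschitz-type slack into the function, and then extract the desired subgradient via Fermat's rule combined with the Moreau--Rockafellar sum rule.

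First I would define $g\colon\R^n\to\oR$ by $g(x):=\ph(x)+\ell\|x-\ox\|$. The calmness inequality \eqref{calmb} says exactly that $g(x)\ge g(\ox)$ for every $x$ in the neighborhood $U$, i.e., $\ox$ is a local minimizer of $g$. Since $\ph$ is convex with values in $(-\infty,+\infty]$ and $x\mapsto\ell\|x-\ox\|$ is convex and finite, $g$ is a proper convex function (proper because $g(\ox)=\ph(\ox)$ is finite), and the standard ``local min equals global min'' property for convex functions upgrades this to $g(x)\ge g(\ox)$ for all $x\in\R^n$; this is a one-line consequence of convexity along the segment $[\ox,y]$ for arbitrary $y\in\R^n$.

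Next, Fermat's rule for convex functions gives $0\in\sub g(\ox)$. Because $x\mapsto\ell\|x-\ox\|$ is continuous (indeed, finite) on all of $\R^n$, the Moreau--Rockafellar sum rule applies with no further qualification and yields
\[
\sub g(\ox)=\sub\ph(\ox)+\ell\,\sub\|\cdot-\ox\|(\ox)=\sub\ph(\ox)+\ell\,\B,
\]
using the well-known fact that the subdifferential of the norm at the origin is the closed unit ball. Thus $0=\ov+z$ for some $\ov\in\sub\ph(\ox)$ and $z\in\ell\B$, so $\|\ov\|=\|z\|\le\ell$, which is the desired subgradient.

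I do not anticipate a serious obstacle: the only points requiring care are verifying that $g$ is proper (which follows from $\ph(\ox)\in\R$ and the convention $\oR=(-\infty,+\infty]$ adopted in the paper, so $\ph$ cannot take the value $-\infty$) and verifying the applicability of the convex sum rule (which holds because $\ell\|\cdot-\ox\|$ is everywhere finite and continuous, hence its domain meets $\dom\ph$ in a set with nonempty relative interior). Both are immediate, so the proof is essentially a three-step chain: local calmness $\Rightarrow$ global minimum of $g$ $\Rightarrow$ $0\in\sub g(\ox)=\sub\ph(\ox)+\ell\B$.
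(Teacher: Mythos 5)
Your proof is correct and follows essentially the same route as the paper's: both define the perturbed function $\ph(\cdot)+\ell\|\cdot-\ox\|$, observe that calmness makes $\ox$ a local (hence, by convexity, global) minimizer, apply the Fermat rule, and then use the convex subdifferential sum rule together with $\sub\|\cdot-\ox\|(\ox)=\B$ to extract the bounded subgradient. Your write-up is in fact slightly more careful than the paper's (which omits the explicit local-to-global upgrade and the justification of the sum rule's applicability), but there is no substantive difference.
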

\begin{proof} Define the function $\psi\colon\R^n\to\oR$ by $\psi(x):=\ph(x)+\ell\|x-\ox\|$ and note that it is convex.  According to \eqref{calmb}, $\ox$ is  a local minimizer for $\ph$, and thus $0\in\sub\psi(\ox)$ by the generalized Fermat stationary rule. Employing the classical subdifferential sum rule of convex analysis, we get
\begin{equation*}
0\in\sub\psi(\ox)=\sub\ph(\ox)+\ell\B,
\end{equation*}
which clearly ensures the existence of a subgradient $\ov\in\sub\ph(\ox)$ with $\|\ov\|\le\ell$.
\end{proof}

Recall that our main results in Section~\ref{sect03} require that $\ov$ be a proximal normal to the set in question. As explained therein, in many applications we need similar results for any normal vectors within $N_\O$, and this is achieved when $\O$ is a constraint system in the sense of \eqref{CS}. This was known when the function $f$ in \eqref{CS} is ${\cal C}^2$-smooth and the basic constraint qualification \eqref{mrcq} fulfills. As shown below, this property still holds for constraint systems when $f$ is merely twice differentiable and MSCQ is satisfied. The following result is a consequence of \cite[Theorem~3.5]{mms}, where the reader can see its detailed proof.

\begin{Proposition}[\bf normal regularity of constraint systems]\label{norm} Let $\O$ admit representation \eqref{CS} around $\ox\in\O$, let $\Th$ be convex, and let MSCQ hold for $\O$ at $\ox$. Then $N^p_\O(\ox)=N_\O(\ox)$.
\end{Proposition}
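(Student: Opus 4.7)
The plan is to prove the nontrivial inclusion $N_\O(\ox)\subseteq N^p_\O(\ox)$, since the reverse inclusion $N^p_\O(\ox)\subseteq N_\O(\ox)$ always holds for closed sets. The strategy is to combine the first-order normal cone chain rule of Proposition~\ref{2chin} with the convexity of $\Th$ and the quadratic expansion furnished by twice differentiability of $f$ at $\ox$. Because $\Th$ is convex, it is automatically normally regular at $f(\ox)$ (the regular and basic normal cones coincide with the classical convex normal cone there), so MSCQ together with this regularity places us in the hypotheses of Proposition~\ref{2chin}.

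Applying that proposition yields
\[
N_\O(\ox)=\nabla f(\ox)^{*} N_\Th\bigl(f(\ox)\bigr),
\]
so every $v\in N_\O(\ox)$ can be written as $v=\nabla f(\ox)^{*}\lambda$ for some $\lambda\in N_\Th(f(\ox))$. Convexity of $\Th$ then gives $\langle\lambda,y-f(\ox)\rangle\le 0$ for every $y\in\Th$. Specializing to $y=f(x)$ with $x\in\O$ near $\ox$, so that $f(x)\in\Th$, yields $\langle\lambda,f(x)-f(\ox)\rangle\le 0$. Next I would plug in the twice-differentiable expansion at $\ox$,
\[
f(x)-f(\ox)=\nabla f(\ox)(x-\ox)+\tfrac12\nabla^2 f(\ox)(x-\ox,x-\ox)+o(\|x-\ox\|^2),
\]
and pair with $\lambda$. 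The linear term produces exactly $\langle v,x-\ox\rangle$, the quadratic term is dominated by $\tfrac12\|\lambda\|\,\|\nabla^2 f(\ox)\|\,\|x-\ox\|^2$, and the remainder $o(\|x-\ox\|^2)$ is absorbed into a slightly larger quadratic coefficient on a sufficiently small neighborhood of $\ox$. The outcome is the existence of $r\ge 0$ and a neighborhood $U$ of $\ox$ such that
\[
\langle v,x-\ox\rangle\le r\,\|x-\ox\|^2\quad\text{for all }x\in\O\cap U,
\]
which is precisely the proximal normal property \eqref{proxn} (understood in its standard localized form, equivalent to $\ox\in P_\O(\ox+tv)$ for some $t>0$).

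Conceptually, there is no genuine obstacle hidden in a difficult estimate; the main point is structural. MSCQ is used only indirectly, through Proposition~\ref{2chin}, to guarantee that every element of $N_\O(\ox)$ is realized by a multiplier $\lambda$ living in the convex target normal cone $N_\Th(f(\ox))$. Once this representation is secured, convexity of $\Th$ turns the subdifferential inequality into a first-order estimate along $\O$, and twice differentiability of $f$ at $\ox$ converts the mismatch between $f(x)-f(\ox)$ and its linearization into exactly a quadratic error, which is what the proximal normal definition tolerates. The only delicate point worth spelling out carefully in the full proof is the dependence of $r$ on $\|\lambda\|$ and $\|\nabla^2 f(\ox)\|$ and the absorption of the $o$-remainder into the quadratic bound on a suitable neighborhood of $\ox$.
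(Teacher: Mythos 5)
Your argument is correct, and it is essentially the intended one: the paper offers no proof of its own here, deferring entirely to \cite[Theorem~3.5]{mms}, but your route---the representation $N_\O(\ox)=\nabla f(\ox)^*N_\Th(f(\ox))$ from Proposition~\ref{2chin} (applicable because convexity of $\Th$ gives its normal regularity at $f(\ox)$), followed by the convex normal inequality for the multiplier $\lambda$ and the second-order Taylor expansion of $f$ at $\ox$---is precisely the standard mechanism behind that cited result. The only step you should write out rather than gesture at is the passage from your local estimate $\la v,x-\ox\ra\le r\|x-\ox\|^2$ on $\O\cap\B_\delta(\ox)$ to the global inequality in \eqref{proxn}: for $x\in\O$ with $\|x-\ox\|\ge\delta$, Cauchy--Schwarz gives $\la v,x-\ox\ra\le\|v\|\,\|x-\ox\|\le(\|v\|/\delta)\|x-\ox\|^2$, so replacing $r$ by $\max\{r,\|v\|/\delta\}$ closes the gap.
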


After these preparations, we are in a position to evaluate the second subderivative of $\dd_\O$ for the constraint system \eqref{CS}. Fix $(\ox,\ov)\in\gph N_\O$ and define the set of {\em Lagrange multipliers} associated with the pair $(\ox,\ov)$ by
\begin{equation}\label{lagn}
\Lambda(\ox,\ov):=\left\{\lambda\in N_\Th\big(f(\ox)\big)\big|\;\nabla f(\ox)^*\lambda=\ov\right\}.
\end{equation}

The basic assumptions for this section and the subsequent material are as follows:\\[1ex]
{\bf(H1)} The set $\O$ has representation \eqref{CS} around $\ox\in\O$, and $\ov\in N_\O(\ox)$.\\
{\bf(H2)} The set $\Th$ in \eqref{CS} is convex, and the mapping $f$ is twice differentiable at $\ox$.\\
{\bf(H3)} The metric subregularity constraint qualification holds for $\O$ at $\ox$ with modulus $\kappa>0$.\\
{\bf(H4)} For every $\olm\in\Lambda(\ox,\ov)$ the set $\Th$ is parabolically derivable at $f(\ox)$ for all vectors $\nabla f(\ox)w$ in the critical cone $K_\Th(f(\ox),\olm)$ from \eqref{crit1}.\vspace*{0.05in}

We briefly comment on the imposed basic assumptions. The ones in (H1) and (H2) are self-evident. Assumption (H4) is satisfied for virtually all of the important sets used in constrained optimization. They include, in particular, polyhedral sets (Example~\ref{poly}), the second-order cone (Example~\ref{ice}), and the cone of positive semidefinite matrices. The MSCQ property in (H3) was discussed above and if it holds, then (H4) is equivalent to saying that $\O$ in \eqref{CS} is parabolically derivable at $\ox$  for all the vectors within $K_\O(\ox,\ov)$. This follows from Theorem~\ref{chain}.\vspace*{0.05in}

Let us proceed by highlighting some useful lower and upper estimates for the second subderivative of $\dd_\O$ that are derived by employing the results of Section~\ref{sect03}.

\begin{Proposition}[\bf estimates for second subderivatives]\label{ule} The following hold:

{\bf(i)} Under the validity of the basic assumptions in {\rm(H1)--(H3)}, for all $w\in\R^n$ we have the lower estimate of the second subderivative
\begin{equation}\label{dine2}
\d^2\dd_\O (\ox,\ov)(w)\ge\sup_{\lm\in\Lambda(\ox,\ov)}\;\big\{\langle\lm,\nabla^2f(\bar x)(w,w)\rangle+\d^2\dd_\Th\big(f(\bar x),\lm\big)\big(\nabla f(\ox)w\big)\big\}.
\end{equation}

{\bf(ii)} If in addition {\rm(H4)} is satisfied, then the second subderivative $\d^2\dd_\O (\ox,\ov)$ is a proper l.s.c.\ function with its domain calculated by
\begin{equation}\label{critco}
\dom\d^2\dd_\O(\ox,\ov)=K_\O(\ox,\ov)=T_\O(\ox)\cap\{\ov\}^\perp.
\end{equation}
Furthermore, for every $w\in K_\O(\ox,\ov)$ we have the upper estimates in the form
\begin{eqnarray*}\label{dine}
-\infty<\d^2\dd_\O(\ox,\ov)(w)&\le&-\sigma_{T^2_\O(\ox,w)}(\ov)=\inf\big\{-\la\ov,u\ra\big|\;u\in T^2_\O(\bar x,w)\big\}\\
&=&\inf\big\{-\la\ov,u\ra\big|\;\nabla f(\bar x)u+\nabla^2f(\bar x)(w,w)\in T^2_\Th\big(f(\bar x),\nabla f(\ox)w\big)\big\}<\infty.
\end{eqnarray*}
\end{Proposition}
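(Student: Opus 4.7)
The plan for part (i) is to reduce the second-order difference quotient of $\dd_\O$ to that of $\dd_\Th$ via the Lagrange relation $\ov=\nabla f(\ox)^*\lm$. I fix an arbitrary $\lm\in\Lambda(\ox,\ov)$ together with sequences $t_k\dn 0$ and $w_k\to w$; the case $\ox+t_k w_k\notin\O$ makes $\Delta^2_{t_k}\dd_\O(\ox,\ov)(w_k)=+\infty$ and is trivial, so I may assume $y_k:=f(\ox+t_k w_k)\in\Th$ for all large $k$. Setting $v_k:=(y_k-f(\ox))/t_k$ and invoking the twice differentiability of $f$ at $\ox$ yields
\begin{equation*}
v_k=\nabla f(\ox)w_k+\tfrac{t_k}{2}\nabla^2 f(\ox)(w_k,w_k)+o(t_k),
\end{equation*}
so $v_k\to\nabla f(\ox)w$. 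Substituting $\la\ov,w_k\ra=\la\lm,\nabla f(\ox)w_k\ra$ and using $\dd_\Th(f(\ox)+t_k v_k)=\dd_\Th(y_k)=0$ produces the key identity
\begin{equation*}
\Delta^2_{t_k}\dd_\O(\ox,\ov)(w_k)=\Delta^2_{t_k}\dd_\Th\big(f(\ox),\lm\big)(v_k)+\la\lm,\nabla^2 f(\ox)(w_k,w_k)\ra+o(1).
\end{equation*}
Passing to $\liminf$ along this sequence, using that $\d^2\dd_\Th(f(\ox),\lm)(\nabla f(\ox)w)$ is by definition a lower bound for $\liminf_k \Delta^2_{t_k}\dd_\Th(f(\ox),\lm)(v_k)$ whenever $v_k\to\nabla f(\ox)w$, and then taking the supremum over $\lm\in\Lambda(\ox,\ov)$, yields the lower estimate claimed in (i).

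For part (ii), the plan is to assemble earlier results. Convexity of $\Th$ combined with MSCQ and Proposition~\ref{norm} upgrades $\ov\in N_\O(\ox)$ to $\ov\in N_\O^p(\ox)$. Hypothesis (H4) combined with the chain rule in Theorem~\ref{chain} then gives parabolic derivability of $\O$ at $\ox$ for every $w\in K_\O(\ox,\ov)$, placing us inside the hypotheses of Theorem~\ref{pri}(i). That result delivers the properness and lower semicontinuity of $\d^2\dd_\O(\ox,\ov)$, a quadratic lower bound $\d^2\dd_\O(\ox,\ov)(w)\ge-r\|w\|^2$, and the domain identity $\dom\d^2\dd_\O(\ox,\ov)=K_\O(\ox,\ov)$ asserted in \eqref{critco}. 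The upper estimate $\d^2\dd_\O(\ox,\ov)(w)\le-\sigma_{T^2_\O(\ox,w)}(\ov)$ for $w\in K_\O(\ox,\ov)$ is precisely Proposition~\ref{pard}, and the explicit description of $T^2_\O(\ox,w)$ in terms of $T^2_\Th$ is obtained by inserting the second-order tangent chain rule \eqref{chr2}. Finiteness $-\sigma_{T^2_\O(\ox,w)}(\ov)<\infty$ comes from nonemptiness of $T^2_\O(\ox,w)$ (Theorem~\ref{chain} under (H4)), while the strict inequality $\d^2\dd_\O(\ox,\ov)(w)>-\infty$ follows from the quadratic lower bound just noted.

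The main obstacle is the careful remainder-tracking in part (i): one must verify that $v_k-\nabla f(\ox)w_k=\tfrac{t_k}{2}\nabla^2 f(\ox)(w_k,w_k)+o(t_k)$ despite $f$ being assumed twice differentiable only at the single point $\ox$, and confirm that multiplying this $o(t_k)$ error by the factor $2/t_k$ appearing in the difference quotient still contributes only $o(1)$. Once this identity is in hand, the liminf argument and the taking of supremum over $\Lambda(\ox,\ov)$ are routine, and part (ii) is essentially a bookkeeping exercise leveraging Theorems~\ref{pri} and \ref{chain}, Proposition~\ref{pard}, and the normal regularity result in Proposition~\ref{norm}.
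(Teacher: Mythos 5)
Your proof is correct and follows essentially the same route as the paper: part (i) is the difference-quotient expansion underlying \cite[Theorem~13.14]{rw}, which the paper simply cites (your identity $\Delta^2_{t_k}\dd_\O(\ox,\ov)(w_k)=\Delta^2_{t_k}\dd_\Th(f(\ox),\lm)(v_k)+\la\lm,\nabla^2 f(\ox)(w_k,w_k)\ra+o(1)$ and the $o(t_k^2)/\sm t_k^2=o(1)$ bookkeeping are exactly what that proof requires), and part (ii) assembles Proposition~\ref{norm}, Theorem~\ref{pri}(i), Proposition~\ref{pard}, and Theorem~\ref{chain} just as the paper does. The only step you leave implicit that the paper spells out is the use of the equivalence \eqref{krie} (via Proposition~\ref{2chin} under MSCQ) to pass from (H4)'s parabolic derivability of $\Th$ on $K_\Th(f(\ox),\olm)$ to that of $\O$ on $K_\O(\ox,\ov)$ before invoking Theorem~\ref{pri}(i).
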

\begin{proof}
The lower estimate \eqref{dine2} can be directly verified  by following the proof of \cite[Theorem~13.14]{rw} with the replacement of the metric regularity assumption on the constraint mapping $x\mapsto f(x)-\Th$ around $(\ox,0)$ by our MSCQ from (H3).

To justify (ii), we first use Proposition~\ref{norm} ensuring that $N^p_\O(\ox)=N_\O(\ox)$. This allows us to employ Theorem~\ref{pri}(i) and get \eqref{critco}. The second inequality in (ii) follows from Proposition~\ref{pard}, and then the equality therein is due to the second-order chain rule from \eqref{chr2} applied to the infimum representation of the negative support function. Finally, the MSCQ assumption (H3) ensures by Proposition~\ref{2chin} the equivalence
\begin{equation}\label{krie}
w\in K_\O(\ox,\ov)\iff\nabla f(\ox)w\in K_\Th\big(f(\ox),\olm\big)\;\mbox{ for any }\;\olm\in\Lambda(\ox,\ov).
\end{equation}
Using this together with (H4) tells us by Theorem~\ref{chain} that $\O$ is parabolically derivable at $\ox$ for every vector $w\in K_\O(\ox,\ov)$. This yields $T^2_\O(\bar x,w)\ne\emp$, and thus $\inf\{-\la\ov,u\ra|\;u\in T^2_\O(\bar x,w)\}<\infty$, which completes the proof of the proposition.
\end{proof}

The upper and lower estimates of the second subderivative obtained in Proposition~\ref{ule} indicate that the precise calculation of $\d^2\dd_\O(\ox,\ov)$ requires deriving an efficient condition under which those lower and upper estimates agree. In order to find such a condition, consider the following {\em linear-convex optimization problem}
\begin{equation}\label{pr}
\min_{u\in\R^n}-\langle\bar v,u\rangle\;\mbox{ subject to }\;\nabla f(\bar x)u+\nabla^2f(\bar x)(w,w)\in T^2_\Th\big(f(\bar x),\nabla f(\ox)w\big)
\end{equation}
for $(\bar x,\bar v)\in\gph N_\O$, where $w\in\R^n$ satisfies the inclusion $\nabla f(\ox)w\in T_\Th(f(\ox))$. First we construct the {\em dual problem} for \eqref{pr} given in the next proposition.

\begin{Proposition}[\bf dual second-order programs]\label{2dual} Under the validity of {\rm(H1)--(H4)}, fix any $w\in K_\O(\ox,\ov)$. Then the dual problem of \eqref{pr} is represented in the form
\begin{equation}\label{du2}
\max_{\lambda\in\R^{m}}\;\langle\lm,\nabla^2f(\bar x)(w,w)\rangle-\sigma_{T^2_\Th(f(\bar x),\nabla f(\ox)w)}(\lm)\;\mbox{ subject to }\;\lm\in\Lambda(\ox,\ov),
\end{equation}
where the set of Lagrange multipliers $\Lambda(\ox,\ov)$ is taken from \eqref{lagn}.
\end{Proposition}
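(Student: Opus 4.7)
The strategy is to write the primal \eqref{pr} as a standard convex inclusion problem over the closed convex set $C:=T^2_\Th(f(\ox),\nabla f(\ox)w)$, apply Fenchel--Rockafellar duality to produce a dual whose only explicit constraint is the linear equation $\nabla f(\ox)^*\lm=\ov$, and then show that finiteness of the dual objective already forces $\lm\in N_\Th(f(\ox))$, thereby recovering the full constraint $\lm\in\Lm(\ox,\ov)$ of \eqref{du2}.

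First, under (H1)--(H3), Proposition~\ref{2chin} gives $\Lm(\ox,\ov)\ne\emp$, and the equivalence \eqref{krie} forces $\nabla f(\ox)w\in K_\Th(f(\ox),\olm)$ for every $\olm\in\Lm(\ox,\ov)$. Hence (H4) yields parabolic derivability of $\Th$ at $f(\ox)$ for $\nabla f(\ox)w$, and combining this with convexity of $\Th$ and the last sentence of Section~\ref{sect02}, the set $C$ is a nonempty closed convex subset of $\R^m$. Setting $q:=\nabla^2f(\ox)(w,w)$, I would rewrite \eqref{pr} as the unconstrained convex program $\inf_u\{-\la\ov,u\ra+\dd_C(\nabla f(\ox)u+q)\}$, and use the Fenchel identity $\dd_C=\sigma_C^*$ to produce the Lagrangian
\begin{equation*}
L(u,\lm)=\la\nabla f(\ox)^*\lm-\ov,u\ra+\la\lm,q\ra-\sigma_C(\lm).
\end{equation*}
Infimizing in $u\in\R^n$ collapses the first term to $0$ when $\nabla f(\ox)^*\lm=\ov$ and to $-\infty$ otherwise, so the dual function equals $\la\lm,q\ra-\sigma_C(\lm)$ under the linear constraint $\nabla f(\ox)^*\lm=\ov$. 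This already reproduces the objective and the linear constraint of \eqref{du2}.

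The remaining task is to show $\sigma_C(\lm)=+\infty$ for every $\lm\notin N_\Th(f(\ox))$, which makes the condition $\lm\in N_\Th(f(\ox))$ built into $\Lm(\ox,\ov)$ implicit. For this I would verify the recession-cone inclusion $T_\Th(f(\ox))\subset\mathrm{rec}\,C$. Given $u\in C$ with an approximating sequence $t_k\dn 0$, $u_k\to u$ satisfying $f(\ox)+t_k\nabla f(\ox)w+\sm t_k^2u_k\in\Th$, and $z\in T_\Th(f(\ox))$ realized for any chosen $s_k\dn 0$ by $z_k\to z$ with $f(\ox)+s_kz_k\in\Th$ (available thanks to geometric derivability of the convex set $\Th$), I would take the convex combination of these two points in $\Th$ with weight $\mu_k:=\al t_k^2/(2s_k)$ for $\al>0$ and $s_k:=\sqrt{t_k}$, so that $\mu_k\dn 0$ and $t_k/s_k\to 0$. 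The combined point remains in $\Th$ and has the form
\begin{equation*}
f(\ox)+t_k\nabla f(\ox)w+\sm t_k^2(u_k+\al z_k)+o(t_k^2),
\end{equation*}
which places $u+\al z$ into $C$ for every $\al>0$; hence $z\in\mathrm{rec}\,C$. Polarizing then gives $\sigma_C(\lm)<\infty\Rightarrow\lm\in T_\Th(f(\ox))^*=N_\Th(f(\ox))$, and the dual of \eqref{pr} becomes exactly \eqref{du2}.

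The main obstacle is this recession-cone inclusion: the weight $\mu_k$ must be scaled delicately so that the unwanted first-order contribution $-\mu_kt_k\nabla f(\ox)w$ produced by the convex combination is truly absorbed into the $o(t_k^2)$ parabolic remainder, which is why a sub-linear choice such as $s_k=\sqrt{t_k}$ (so that $t_k^3/s_k=o(t_k^2)$) rather than $s_k=t_k$ is needed. Everything else is routine convex Lagrangian duality.
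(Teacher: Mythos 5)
Your proposal is correct, and its overall architecture coincides with the paper's: both rewrite \eqref{pr} as the unconstrained convex program $\inf_u\{-\la\ov,u\ra+\dd_{C}(\nabla f(\ox)u+\nabla^2f(\ox)(w,w))\}$ with $C=T^2_\Th(f(\ox),\nabla f(\ox)w)$ and dualize, and both recover the hidden constraint $\lm\in N_\Th(f(\ox))$ from the interaction between $T_\Th(f(\ox))$ and $C$. Where you differ is in how that last step is executed. The paper invokes \cite[Proposition~13.12]{rw} to get $T^2_\Th+T_\Th\subset T^2_\Th$ (hence equality, since $0\in T_\Th$), applies \cite[Example~11.41]{rw} to obtain the dual with constraint set $\Gamma=T^2_\Th+T_\Th$, and then splits $\sigma_\Gamma=\sigma_{T^2_\Th}+\sigma_{T_\Th}=\sigma_{T^2_\Th}+\dd_{N_\Th(f(\ox))}$ via \cite[Corollary~11.24]{rw}. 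You instead prove the equivalent fact $T_\Th(f(\ox))\subset\mathrm{rec}\,C$ by hand (the identity $C+K=C$ for a closed convex set and a cone $K$ is the same statement as $K\subset\mathrm{rec}\,C$) and conclude from $\dom\sigma_C\subset(\mathrm{rec}\,C)^*\subset N_\Th(f(\ox))$ that the linear constraint $\nabla f(\ox)^*\lm=\ov$ already localizes the dual to $\Lambda(\ox,\ov)$. Your explicit convex-combination construction checks out: with $s_k=\sqrt{t_k}$ and $\mu_k=\al t_k^2/(2s_k)$ one indeed has $\mu_k s_k z_k=\sm\al t_k^2z_k$ while $\mu_k t_k\nabla f(\ox)w=O(t_k^{5/2})=o(t_k^2)$ and $\mu_k\dn 0$, so $u+\al z\in C$ for all $\al>0$. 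What your route buys is self-containedness -- it replaces the cited set-addition identity of \cite[Proposition~13.12]{rw} with an elementary parabolic-curve argument -- at the cost of a slightly longer verification; the paper's route is shorter but leans on three external results from \cite{rw}. Both yield exactly \eqref{du2}.
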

\begin{proof} Observe first that \eqref{pr} is indeed a problem of convex programming since the convexity of $\Th$ and the parabolic derivability in (H4) ensure that the constraint set $T^2_\Th(f(\bar x),\nabla f(\ox)w)$ in \eqref{pr} is convex. Further, problem \eqref{pr} can be written as the unconstrained form
\begin{equation}\label{pir2}
\min_{u\in\R^n}\;-\langle\bar v,u\rangle+\dd_{T^2_\Th(f(\bar x),\nabla f(\ox)w)}\big(\nabla f(\bar x)u+\nabla^2f(\bar x)(w,w)\big).
\end{equation}
Picking $\olm\in\Lambda(\ox,\ov)$ and using \eqref{krie}, for any $w\in K_\O(\ox,\ov)$ we have $\nabla f(\ox)w\in K_\Th(f(\ox),\olm)$. By assumption (H4) on the parabolic derivability of $\Th$ at $f(\ox)$ for $\nabla f(\ox)w$, the indicator function $\dd_{T^2_\Th(f(\bar x),\nabla f(\ox)w)}$ is a proper, l.s.c., and convex function; see Theorem~\ref{pri}(i). Furthermore, the result of \cite[Proposition~13.12]{rw} infers the inclusion
\begin{equation*}
T^2_\Th\big(f(\bar x),\nabla f(\ox)w\big)+T_\Th\big(f(\ox)\big)\subset T^2_\Th\big(f(\bar x),\nabla f(\ox)w\big).
\end{equation*}
The opposite inclusion follows immediately from $0\in T_\Th(f(\ox))$, and hence we arrive at
\begin{equation*}
T^2_\Th\big(f(\bar x),\nabla f(\ox)w\big)+T_\Th\big(f(\ox)\big)=T^2_\Th\big(f(\bar x),\nabla f(\ox)w\big).
\end{equation*}
Taking this into account and employing \cite[Example~11.41]{rw} give us the dual problem of \eqref{pir2} as
\begin{equation}\label{du}
\max_{\lambda\in\R^{m}}\;\langle\lm,\nabla^2f(\bar x)(w,w)\rangle-\sigma_{\Gamma}(\lm)\;\mbox{ subject to }\;\nabla f(\bar x)^*\lambda=\ov,
\end{equation}
where $\Gamma:=T^2_\Th(f(\bar x),\nabla f(\ox)w)+T_\Th(f(\ox))$. It follows from the parabolic derivability of $\Th$ at $f(\ox)$ for $\nabla f(\ox)w$ that  $T^2_\Th(f(\bar x),\nabla f(\ox)w)\ne\emp$, and thus we derive from \cite[Corollary~11.24]{rw} that
\begin{equation*}
\sigma_\Gamma=\sigma_{T^2_\Th(f(\bar x),\nabla f(\ox)w)}+\sigma_{T_\Th(f(\ox))}.
\end{equation*}
Since $\sigma_{T_\Th(f(\ox))}=\dd_{N_\Th(f(\ox))}$, the dual problem \eqref{du} can be equivalently written as \eqref{du2}.
\end{proof}

Comparing the dual problem \eqref{du2} and the right-hand side of \eqref{dine2} indicates that they {\em agree} if we assume further that $\Th$ is {\em parabolically regular} at $f(\ox)$ for every $\lambda\in\Lambda(\ox,\ov)$. By Theorem~\ref{pri}(iii), the
later condition amounts to
\begin{equation}\label{dine3}
\d^2\dd_\Th\big(f(\bar x),\lm\big)\big(\nabla f(\ox)w\big)=-\sigma_{T^2_\Th(f(\bar x),\nabla f(\ox)w)}(\lm)\quad \mbox{for all}\;\; \lambda\in\Lambda(\ox,\ov).
\end{equation}
This tells us that the lower and upper estimates of the second subderivative $\d^2\dd_\O(\ox,\ov)$ obtained in Proposition~\ref{ule} coincide if the optimal values of the primal and dual problems \eqref{pr} and \eqref{du2}, respectively, are the same. Let us address this issue by considering the {\em optimal value function} $\vt\colon\R^m\to\oR$ of the canonically perturbed problem \eqref{pr}, defined by
\begin{equation}\label{pl06}
\vt(p)=\inf\big\{-\la\bar v,u\ra\big|\;\nabla f(\bar x)u+\nabla^2f(\bar x)(w,w)+p\in T^2_\Th\big(f(\bar x),\nabla f(\ox)w\big)\big\}.
\end{equation}
Denote by $\Lambda(\ox,\ov,w)$ the set of optimal solutions to the dual problem \eqref{du2}.

\begin{Proposition}[\bf duality relationships]\label{dua} In the setting of Proposition~{\rm\ref{2dual}} we have:

{\bf(i)} $\vt(0)\in\R$, which means that the optimal value of the primal problem  \eqref{pr} is finite.

{\bf(ii)} $\Lambda(\ox,\ov,w)\cap(\kappa\|\ov\|\B)\ne\emp$, where $\kappa$ is taken from \eqref{mscq}.

{\bf(iii)} There is no duality gap between the optimal values of the primal and dual problems \eqref{pr} and \eqref{du2}, respectively. Moreover, it holds that
\begin{equation}\label{dua1}
\vt(0)=\max_{\lm\in\Lambda(\ox,\ov)\cap(\kappa\,\|\ov\|\B)}\big\{\langle\lm,\nabla^2f(\bar x)(w,w)\rangle-\sigma_{T^2_\Th(F(\bar x),\nabla f(\ox)w)}(\lm)\big\}.
\end{equation}
\end{Proposition}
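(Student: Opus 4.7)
My plan is to treat (i) as an almost immediate consequence of the estimates already in hand, and then to establish (ii) and (iii) together by a single convex-calmness argument applied to the perturbation function $\vt$ in \eqref{pl06}. For (i), the second-order chain rule \eqref{chr2} from Theorem~\ref{chain}---available here because (H3) and (H4) are in force---identifies the constraint set of the primal problem \eqref{pr} with $T^2_\O(\ox,w)$, so that $\vt(0)=-\sigma_{T^2_\O(\ox,w)}(\ov)$. The lower bound $\d^2\dd_\O(\ox,\ov)(w)>-\infty$ from Proposition~\ref{ule}(ii), together with the upper estimate in the same proposition, sandwiches $\vt(0)$ between two finite numbers, while parabolic derivability of $\O$ at $\ox$ for $w\in K_\O(\ox,\ov)$ (Theorem~\ref{chain} plus (H4)) guarantees $T^2_\O(\ox,w)\ne\emp$. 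This gives $\vt(0)\in\R$.

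For (ii) and (iii), the decisive input is the uniform outer Lipschitz bound from Theorem~\ref{fors}. First, $\vt$ is convex because the feasibility set $\{(u,p)\,|\,\nabla f(\ox)u+\nabla^2f(\ox)(w,w)+p\in T^2_\Th(f(\ox),\nabla f(\ox)w)\}$ is jointly convex in $(u,p)$---by (H4) and \cite[Proposition~13.12]{rw} the second-order tangent set is convex---and the objective is linear. Second, Theorem~\ref{fors} supplies, for every $p\in\R^m$ and every $u\in S_w(p)$, some $\tilde u\in S_w(0)$ with $\|u-\tilde u\|\le\kappa\|p\|$. Hence
\begin{equation*}
-\la\ov,u\ra\ge-\la\ov,\tilde u\ra-\|\ov\|\cdot\|u-\tilde u\|\ge\vt(0)-\kappa\|\ov\|\|p\|.
\end{equation*}
Taking the infimum over $u\in S_w(p)$ (the case $S_w(p)=\emp$ being vacuous) shows that $\vt$ is convex and calm at $0$ from below with constant $\kappa\|\ov\|$. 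Proposition~\ref{nemp} then yields some $\olm\in\sub\vt(0)$ with $\|\olm\|\le\kappa\|\ov\|$.

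It remains to identify this subgradient $\olm$ with a dual optimizer in \eqref{du2}. I plan to substitute $p=y-\nabla f(\ox)u-\nabla^2f(\ox)(w,w)$ for arbitrary $u\in\R^n$ and $y\in T^2_\Th(f(\ox),\nabla f(\ox)w)$ into the subgradient inequality $\vt(p)\ge\vt(0)+\la\olm,p\ra$, noting that this choice makes $u$ feasible at $p$ and hence $\vt(p)\le-\la\ov,u\ra$. The resulting inequality is affine in $u$: letting $u$ range over $\R^n$ forces $\nabla f(\ox)^*\olm=\ov$, after which letting $y$ range over $T^2_\Th(f(\ox),\nabla f(\ox)w)$ delivers
\begin{equation*}
\la\olm,\nabla^2f(\ox)(w,w)\ra-\sigma_{T^2_\Th(f(\ox),\nabla f(\ox)w)}(\olm)\ge\vt(0).
\end{equation*}
Finiteness of this support function combined with the inclusion $T_\Th(f(\ox))\subset\mbox{rec}\,T^2_\Th(f(\ox),\nabla f(\ox)w)$---already used in the proof of Proposition~\ref{2dual} via \cite[Proposition~13.12]{rw}---then forces $\olm\in N_\Th(f(\ox))$, so $\olm\in\Lambda(\ox,\ov)$. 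Weak Lagrangian duality gives the reverse inequality for every dual-feasible multiplier, upgrading the displayed bound to an equality at $\olm$ and producing the max formula \eqref{dua1} while simultaneously witnessing $\Lambda(\ox,\ov,w)\cap(\kappa\|\ov\|\B)\ne\emp$.

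The step I expect to require the most care is the last one: pulling $\nabla f(\ox)^*\olm=\ov$ out of a single subgradient inequality by a careful affine-in-$u$ argument, and turning finiteness of $\sigma_{T^2_\Th(\cdot)}(\olm)$ into the membership $\olm\in N_\Th(f(\ox))$ via the recession-cone inclusion noted above; the earlier paragraphs are either direct invocations of cited results or the bare-hands calmness calculation.
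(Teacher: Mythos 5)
Your proposal is correct, and its skeleton coincides with the paper's: part (i) via the identification $\vt(0)=-\sigma_{T^2_\O(\ox,w)}(\ov)$ through the chain rule and the finiteness supplied by Proposition~\ref{ule}(ii); parts (ii)--(iii) via the uniform outer Lipschitz bound of Theorem~\ref{fors}, which yields $\vt(p)\ge\vt(0)-\kappa\|\ov\|\,\|p\|$, hence convexity and calmness from below of $\vt$ and, by Proposition~\ref{nemp}, a subgradient $\olm\in\sub\vt(0)$ with $\|\olm\|\le\kappa\|\ov\|$. Where you diverge is the final step: the paper closes the argument by citing \cite[Theorem~2.142(i)]{bs}, a general perturbation-duality theorem asserting that $\sub\vt(0)$ equals the set of dual optimal solutions and that $\sub\vt(0)\ne\emp$ forces zero duality gap; you instead extract dual feasibility and dual optimality of $\olm$ by hand from the single subgradient inequality $\vt(p)\ge\vt(0)+\la\olm,p\ra$. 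Your bare-hands computation checks out: the substitution $p=y-\nabla f(\ox)u-\nabla^2f(\ox)(w,w)$ makes $u$ feasible, the resulting inequality is affine in $u$ so it forces $\nabla f(\ox)^*\olm=\ov$, the finiteness of $\sigma_{T^2_\Th(f(\ox),\nabla f(\ox)w)}(\olm)$ together with $T^2_\Th(f(\ox),\nabla f(\ox)w)+T_\Th(f(\ox))\subset T^2_\Th(f(\ox),\nabla f(\ox)w)$ forces $\la\olm,d\ra\le 0$ for all $d\in T_\Th(f(\ox))$ and hence $\olm\in N_\Th(f(\ox))$, and weak duality then upgrades the lower bound to equality and delivers \eqref{dua1}. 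What the paper's route buys is brevity and a packaged statement covering both the identification $\Lambda(\ox,\ov,w)=\sub\vt(0)$ and the no-gap conclusion in one citation; what yours buys is self-containedness, since everything is derived from the subgradient inequality and the recession-cone inclusion already established in the proof of Proposition~\ref{2dual}, at the cost of the slightly delicate affine-in-$u$ and support-function arguments you flagged. Both are valid proofs of the statement.
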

\begin{proof} We have already shown that the dual problem of \eqref{pr} is \eqref{du2}. To verify (i), pick $w\in K_\O(\bar x,\ov)$ and deduce from  Proposition~\ref{ule}(ii) that
\begin{equation}\label{pr9}
\vt(0)=\inf\big\{-\la\bar v,u\ra\big|\;u \in T^2_\O(\bar x,w)\big\}=-\sigma_{T^2_\O(\ox,w)}(\ov),
\end{equation}
which implies that $\vt(0)$ is a finite number. Thus we are done with (i).

To verify (ii), observe that the feasible region for problem \eqref{pl06} is exactly the set $S_w(p)$ from \eqref{mapt}. Fix $p\in\R^m$ and $u\in S_w(p)$. Then using the outer Lipschitzian property for $S_w$ established in Theorem~\ref{fors}, we arrive at the estimate
\begin{equation*}
\vt(p)\ge\vt(0)-\kappa\|\ov\|\cdot\|p\|\;\mbox{ for all }\;p\in\R^m.
\end{equation*}
This along with $\vt(0)\in\R$ tells us that the optimal value function $\vt(\cdot)$ is calm at $\op=0$ from below with constant $\kappa\|\ov\|$. Moreover, we get from \cite[Proposition~2.22]{rw} that the value function $\vt(\cdot)$ is convex. Appealing now to Proposition~\ref{nemp} gives us a vector $\lm\in\sub\vt(0)$ with $\|\lm\|\le\kappa\|\ov\|$. On the other hand, it follows from \cite[Theorem~2.142(i)]{bs} that $\Lambda(\ox,\ov,w)=\sub\vt(0)$. Combining all of these implies that there is a vector $\lm\in\Lambda(\ox,\ov,w)$ such that $\|\lm\|\le\kappa\|\ov\|$, which justifies (ii).

Finally, due to $\sub\vt(0)\ne\emp$ it follows from \cite[Theorem~2.142(i)]{bs} that the optimal values of the primal and dual problems \eqref{pr} and \eqref{du2}, respectively, are equal to each other, i.e.,
\begin{equation*}
\vt(0)=\max_{\lm\in\Lambda(\ox,\ov)}\big\{\langle\lm,\nabla^2f(\bar x)(w,w)\rangle-\sigma_{T^2_\Th\big(f(\bar x),\nabla f(\ox)w\big)}(\lm)\big\}.
\end{equation*}
Combining this with (ii) justifies \eqref{dua1} and thus completes the proof of the proposition.
\end{proof}

Having in hand the above duality, we are now in a position to establish parabolic regularity of constraint systems and obtain a precise formula for computing their second subderivatives.

\begin{Theorem}[\bf second subderivatives of parabolically regular constraint systems]\label{epi} Suppose in addition to {\rm(H1)--(H4)} with $w\in K_\O(\ox,\ov)$ that the set $\Th$ in \eqref{CS} is parabolically regular at $f(\ox)$ for every $\lambda\in\Lambda(\ox,\ov)$. Then the constraint system $\O$ is parabolically regular at $\ox$ for $\ov$, and for any $w\in\R^n$ the second subderivative of $\dd_\O$ is calculated by
\begin{eqnarray}\label{epi2}
\d^2\dd_\O(\ox,\ov)(w)&=&\max_{\lm\in\Lambda(\ox,\ov)}\big\{\langle\lm,\nabla^2f(\bar x)(w,w)\rangle+\d^2\dd_\Th\big(f(\bar x),\lm\big)\big(\nabla f(\ox)w\big)\big\}\\
&=&\max_{\lm\in\Lambda(\ox,\ov)\,\cap(\kappa\,\|\ov\|\B)}\big\{\langle\lm,\nabla^2f(\bar x)(w,w)\rangle+\d^2\dd_\Th\big(f(\bar x),\lm\big)\big(\nabla f(\ox)w\big)\big\}.\nonumber
\end{eqnarray}
\end{Theorem}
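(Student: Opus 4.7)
\medskip
\noindent\textbf{Proof proposal.} The plan is to sandwich $\d^2\dd_\O(\ox,\ov)(w)$ between the lower estimate \eqref{dine2} of Proposition~\ref{ule}(i) and the upper estimate $-\sigma_{T^2_\O(\ox,w)}(\ov)$ of Proposition~\ref{ule}(ii), using the no-duality-gap statement of Proposition~\ref{dua}(iii) together with the parabolic regularity of $\Th$ to show that these two bounds actually coincide. Once equality of the bounds is established, parabolic regularity of $\O$ will follow from Theorem~\ref{pri}(iii). Note that assumption~(H2) (convexity of $\Th$) and the MSCQ~(H3) allow us to invoke Proposition~\ref{norm} to guarantee $\ov\in N_\O^p(\ox)$, which is what Theorem~\ref{pri}(iii) requires.

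First I would dispose of the trivial case $w\notin K_\O(\ox,\ov)$. By Proposition~\ref{ule}(ii), the left-hand side of \eqref{epi2} equals $+\infty$. On the other hand, the normal-cone chain rule in Proposition~\ref{2chin} gives $\Lambda(\ox,\ov)\neq\emptyset$, and for every $\olm\in\Lambda(\ox,\ov)$ the equivalence \eqref{krie} forces $\nabla f(\ox)w\notin K_\Th(f(\ox),\olm)$. Applying Proposition~\ref{ssp}(iv) to the indicator $\dd_\Th$ yields $\d^2\dd_\Th(f(\ox),\olm)(\nabla f(\ox)w)=+\infty$, so the maxima on the right in \eqref{epi2} are also $+\infty$. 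The nontrivial identifications are achieved on $K_\O(\ox,\ov)$.

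Now fix $w\in K_\O(\ox,\ov)$ and chain together:
\begin{align*}
\d^2\dd_\O(\ox,\ov)(w) &\ge \sup_{\lm\in\Lambda(\ox,\ov)}\bigl\{\langle\lm,\nabla^2f(\ox)(w,w)\rangle+\d^2\dd_\Th(f(\ox),\lm)(\nabla f(\ox)w)\bigr\}\\
&= \sup_{\lm\in\Lambda(\ox,\ov)}\bigl\{\langle\lm,\nabla^2f(\ox)(w,w)\rangle-\sigma_{T^2_\Th(f(\ox),\nabla f(\ox)w)}(\lm)\bigr\}\\
&\ge \max_{\lm\in\Lambda(\ox,\ov)\cap(\kappa\|\ov\|\B)}\bigl\{\langle\lm,\nabla^2f(\ox)(w,w)\rangle-\sigma_{T^2_\Th(f(\ox),\nabla f(\ox)w)}(\lm)\bigr\}\\
&= -\sigma_{T^2_\O(\ox,w)}(\ov) \;\ge\; \d^2\dd_\O(\ox,\ov)(w),
\end{align*}
where the first inequality is Proposition~\ref{ule}(i), the first equality uses the parabolic regularity of $\Th$ at $f(\ox)$ for each $\lm\in\Lambda(\ox,\ov)$ via Theorem~\ref{pri}(iii) (see \eqref{dine3}), the penultimate equality is exactly the duality identity \eqref{dua1} of Proposition~\ref{dua}(iii), and the last inequality is the upper estimate of Proposition~\ref{ule}(ii). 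Consequently all displayed quantities are equal, which simultaneously proves both forms of \eqref{epi2} (and shows that the supremum over the full Lagrange multiplier set is attained at a point in $\Lambda(\ox,\ov)\cap(\kappa\|\ov\|\B)$, providing the max).

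Finally, the sandwich above yields the identity $\d^2\dd_\O(\ox,\ov)(w)=-\sigma_{T^2_\O(\ox,w)}(\ov)$ for every $w\in K_\O(\ox,\ov)$; combining this with $\ov\in N_\O^p(\ox)$ from Proposition~\ref{norm} and the parabolic derivability of $\O$ on $K_\O(\ox,\ov)$ (which follows from (H3)--(H4) via Theorem~\ref{chain}), Theorem~\ref{pri}(iii) delivers the parabolic regularity of $\O$ at $\ox$ for $\ov$. The main technical obstacle is conceptual rather than computational: one has to recognize that the dual problem~\eqref{du2}, whose optimal value equals the upper envelope $-\sigma_{T^2_\O(\ox,w)}(\ov)$ by Proposition~\ref{dua}, becomes term-by-term identical to the lower envelope in~\eqref{dine2} precisely under the parabolic regularity hypothesis on $\Th$ through the identity \eqref{dine3}; once this matching is observed, the proof reduces to threading together the previously established propositions.
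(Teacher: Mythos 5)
Your proposal is correct and follows essentially the same route as the paper's proof: the lower estimate \eqref{dine2}, the upper estimate and duality identity \eqref{dua1} from Propositions~\ref{ule} and \ref{dua}, the matching of the two bounds via \eqref{dine3} under parabolic regularity of $\Th$, and the final appeal to Theorem~\ref{pri}(iii) together with Proposition~\ref{norm} to conclude parabolic regularity of $\O$. The only cosmetic differences are that you treat the case $w\notin K_\O(\ox,\ov)$ first (citing Proposition~\ref{ssp}(iv), whose properness hypothesis is supplied by Theorem~\ref{pri}(i) under (H4), exactly as in the paper) and that you write the sandwich as a single chain of inequalities.
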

\begin{proof} It follows from Proposition~\ref{norm} that $N^p_\O(\ox)=N_\O(\ox)$. This opens the door for using Theorem~\ref{pri}(iii) to justify the parabolic regularity of $\dd_\O$ at $\ox$ for $\ov$. To proceed, pick any $w\in K_\O(\ox,\ov)$. Then employing \eqref{dine2}, Proposition~\ref{dua}(ii), and the parabolic regularity of $\Th$ at $f(\ox)$ for every $\lambda\in\Lambda(\ox,\ov)$ brings us to the inequality
\begin{eqnarray*}
\max_{\lm\in\Lambda(\ox,\ov)}\big\{\langle\lm,\nabla^2f(\bar x)(w,w)\rangle+\d^2\dd_\Th\big(f(\bar x),\lm\big)\big(\nabla f(\ox)w\big)\big\}\le\d^2\dd_\O(\ox,\ov)(w).
\end{eqnarray*}
 Moreover, we deduce from Proposition~\ref{ule}(ii) that
\begin{eqnarray*}
\d^2\dd_\O(\ox,\ov)(w)&\le&-\sigma_{T^2_\O(\ox,w)}(\ov)=\inf\big\{-\la\bar v,u\ra\big|\;u \in T^2_\O(\bar x,w)\big\}=\vt(0)\\
&\overset{\ss\mbox{by}\;\eqref{dua1}}{=}&\max_{\lm\in\Lambda(\ox,\ov)\,\cap(\kappa\,\|\ov\|\B)}\;\;\big\{\langle\lm,\nabla^2f(\bar x)(w,w)\rangle-\sigma_{T^2_\Th(Ff(\bar x),\nabla f(\ox)w)}(\lm)\big\}\\
&\overset{\ss\mbox{by}\;\eqref{dine3}}{=}&\max_{\lm\in\Lambda(\ox,\ov)\,\cap(\kappa\,\|\ov\|\B)}\;\;\big\{\langle\lm,\nabla^2f(\bar x)(w,w)\rangle+\d^2\dd_\Th\big(f(\bar x),\lm\big)\big(\nabla f(\ox)w\big)\big\}.
\end{eqnarray*}
Combining the above relationships justifies the claimed formula for $\d^2\dd_\O(\ox,\ov)(w)$ whenever $w\in K_\O(\ox ,\ov)$. Moreover, it shows that
for all $w\in K_\O(\ox,\ov)$ we have
\begin{equation*}
\d^2\dd_\O(\ox,\ov)(w)=-\sigma_{T^2_\O(\ox,w)}(\ov).
\end{equation*}
Appealing now to Theorem~\ref{pri}(iii) verifies that $\O$ is parabolically regular at $\ox$ for $\ov$.

It remains to justify the claimed formula for $\d^2\dd_\O(\ox,\ov)(w)$ when $w\notin K_\O(\ox,\ov)$. Remember that by \eqref{critco} we have the equality $\dom\d^2\dd_\O(\ox,\ov)=K_\O(\ox,\ov)$. Since $\d^2\dd_\O(\ox,\ov)$ is a proper function due to Proposition~\ref{ule}(i), it follows that $\d^2\dd_\O(\ox,\ov)(w)=\infty$ for all $w\notin K_\O(\ox,\ov)$. On the other hand, by \eqref{krie} the inclusion $w\notin K_\O(\ox,\ov)$ is equivalent to  $\nabla f(\ox)w\notin K_\Th(f(\ox),\lambda)$ for all $\lambda\in\Lambda(\ox,\ov)$. Assumption (H4) postulates that whenever $\lambda\in\Lambda(\ox,\ov)$ the set $\Th$ is parabolically derivable at $f(\ox)$ for all vectors $\nabla f(\ox)w\in  K_\Th(f(\ox),\lm)$. This along with Theorem~\ref{pri}(i) implies that for any $\lambda\in\Lambda(\ox,\ov)$ we have the condition
\begin{equation*}
\d^2\dd_\Th\big(g(\bar x),\lm\big)\big(\nabla f(\ox)w\big)=\infty\;\mbox{ if }\;\nabla f(\ox)w\notin K_\Th\big(f(\ox),\lambda\big).
\end{equation*}
Since both sets $\Lambda(\ox,\ov)$ and $\Lambda(\ox,\ov)\,\cap(\kappa\,\|\ov\|\B)$ are nonempty due to assumption (H3), the later equality means that for any $w\notin K_\O(\ox,\ov)$ both sides in \eqref{epi2} are equal to $\infty$, and so the claimed formula holds in this case as well. Thus we complete the proof of the theorem.
\end{proof}

The proof of Theorem~\ref{epi} suggests useful complements to the second subderivative formula.

\begin{Remark}[\bf variations of the second subderivative formula]\label{rem5}{\rm The following assertions hold under the assumptions of Theorem~\ref{epi}:

{\bf(i)} It follows from the proof of Theorem~\ref{epi} that for any number $r\in\R$ with $r\ge\kappa\,\|\ov\|$ and any $w\in\R^n$ the second subderivative of $\dd_\O$ can be expressed as
\begin{equation}\label{epi4}
\d^2\dd_\O(\ox,\ov)(w)=\max_{\lm\in\Lambda(\ox,\ov)\,\cap r\B}\big\{\langle\lm,\nabla^2f(\bar x)(w,w)\rangle+\d^2\dd_\Th\big(f(\bar x),\lm\big)\big(\nabla f(\ox)w\big)\big\}.
\end{equation}

{\bf(ii)} Proposition~\ref{dua}(ii) tells us that $\Lambda(\ox,\ov,w)\ne\emp$ for all $w\in K_\O(\ox,\ov)$, where $\Lambda(\ox,\ov,w)$ stands for the set of optimal solutions to the dual problem \eqref{du2}. On the other hand, we know from Proposition~\ref{ssp}(iv) that for any $w\in K_\O(\ox,\ov)$ the function
\begin{equation*}
\lm\mapsto\langle\lm,\nabla^2f(\bar x)(w,w)\rangle+\d^2\dd_\Th\big(f(\bar x),\lm\big)\big(\nabla f(\ox)w\big)
\end{equation*}
is concave. This implies that the set of optimal solutions to the problem stated on the right-hand side in \eqref{epi4} is exactly $\Lambda(\ox,\ov,w)\cap r\B$.}
\end{Remark}

Let us now present an example, which provides a direct application of Theorem~\ref{epi} to establishing the parabolic regularity of the {\em second-order cone} in conic programming and deriving a precise formula for the computation of the second subderivative of its indicator function. It is worth mentioning that parabolic regularity of the latter cone can be justified by using the known fact that the second-order cone is second-order regular in the sense of \cite{bs} and then applying \cite[Proposition~3.103]{bs}. Now we [provide a direct proof of this fact by employing the chain rule for parabolic regularity established in Theorem~\ref{epi}.

\begin{Example}[\bf second-order cone in conic programming]\label{ice}{\rm Consider the following remarkable nonpolyhedral cone, which plays an important role in conic programming and its applications;
see, e.g., \cite{hms} and the references therein. It is known under the names of {\em second-order/Lorentz/ice-cream cone} and is defined by
\begin{equation}\label{Qm}
{\cal Q}:=\big\{x=(y,x_n)\in\R^{n-1}\times\R\big|\;\|y\|\le x_n\big\}.
\end{equation}
If $\ox\in\inte\Q$, then $T^2_\Q(\ox,w)=\R^n$ whenever $w\in T_\Q(\ox)=\R^n$, and so $\Q$ is parabolically derivable at $\ox$ for every $w\in T_\Q(\ox)$. Since $N_\Q(\ox)=\{0\}$, we have  $K_\Q(\ox,0)=\R^n$. Theorem~\ref{pri}(i) tells us that $\dom\d^2\dd_\Q(\ox,0)=K_\Q(\ox,0)$. It is easy to see that $\Q$ satisfies Definition~\ref{par-reg} in this case, and thus it is parabolically regular at $\ox$ for $\ov=0$ with $\d^2\dd_\Q(\ox,0)=\dd_{K_\Q(\ox,0)}$.

If $\ox=0\in\Q$, then we get $T^2_\Q(\ox,w)=T_\Q(w)$ whenever $w\in T_\Q(\ox)=\Q$, and hence $\Q$ is parabolically derivable at $\ox$ for every $w\in T_\Q(\ox)$. Pick any $\ov\in N_\Q(\ox)=-\Q$ and $w\in\R^n$ with $\d^2\dd_\Q(\ox,\ov)(w)<\infty$. Theorem~\ref{pri}(i) yields $w\in K_\Q(\ox,\ov)$, and so $w\in T_\Q(\ox)=\Q$. Therefore
\begin{equation*}
0\le\d^2\dd_\Q(\ox,\ov)(w)\le\lim_{t\dn 0}\Delta_{t}^2\dd_\Q(\ox,\ov)(w)=0,
\end{equation*}
which tells us by Definition~\ref{par-reg} that $\Q$ is parabolically regular at $\ox$ for every $\ov\in N_\Q(\ox)$.

Consider the remaining most challenging case where $\ox\in(\bd\Q)\setminus\{0\}$. Observe that in this case the cone $\Q$ can be equivalently described as the constraint system \eqref{CS} by
\begin{equation*}
\Q=\big\{x=(y,x_n)\in\mathbb{R}^{n-1}\times\R\big|\;f(x):=\big(\|y\|^2-x_n^2,-x_n\big)\in\mathbb{R}_-^2\big\}.
\end{equation*}
Since $\R^2_-$ is a polyhedral set and $\nabla f(\ox)$ has full rank due to $\ox\in(\bd\Q)\setminus\{0\}$, we deduce from Example~\ref{poly} and Theorem~\ref{chain} that $\Q$ is parabolically derivable at $\ox$ for every $w\in T_\Q(\ox)$. Furthermore, Theorem~\ref{epi} ensures that $\Q$ is parabolically regular at $\ox$ for every $\ov\in N_\Q(\ox)$. To obtain finally a formula for the second subderivative of $\dd_\Q$ in this case, observe that since $f(\ox)=(0,-\ox_n)\in\R^2$ with $\ox_n>0$, the set of Lagrange multiplier \eqref{lagn} can be expressed by
\begin{equation*}
\Lambda(\ox,\ov)=\big\{\lambda\in N_{\R^2_-}\big(f(\ox)\big)\big|\;\nabla f(\ox)^*\lambda=\ov\big\}=\big\{\olm\big\}\;\mbox{ with }\;\olm=\Big(\dfrac{\|\ov\|}{2\|\ox\|},0\Big)\in\R^2.
\end{equation*}
Appealing now to \eqref{epi2} gives us the precise computation of the second subderivative of $\dd_\Q$:
\begin{eqnarray*}
\d^2\dd_\Q(\bar x,\ov)(w)&=&\langle\olm,\nabla^2 f(\ox)(w,w)\rangle+\d^2\dd_{\R^2_-}\big(f(\ox),\olm\big)\big(\nabla f(\ox)w\big)\\
&\overset{\ss\mbox{by}\;\eqref{sdpo}}{=}&\dfrac{\|\ov\|}{\|\ox\|}\Big(-w_n^2+\|u\|^2\Big)+\dd_{K_{\R^2_-}(f(\ox),\olm)}\big(\nabla f(\ox)w\big)\\
&=&\dfrac{\|\ov\|}{\|\ox\|}\Big(-w_n^2+\|u\|^2\Big)+\dd_{K_\Q(\ox,\ov)}(w)\;\mbox{ for any }\;\;w=(u,w_n)\in\mathbb{R}^{n-1}\times\R,
\end{eqnarray*}
which completes our second-order analysis of the Lorentz cone \eqref{Qm}.}
\end{Example}

Next we proceed with several important consequences of Theorem~\ref{epi}. The first one and the subsequent discussions address the duality issues for which Theorem~\ref{epi} offers pieces of new information in comparison with the above Propositions~\ref{2dual} and \ref{dua}.

\begin{Corollary}[\bf existence of primal optimal solutions]\label{osp} Let the basic assumptions {\rm(H1)--(H4)} hold, and let the set $\Th$ from \eqref{CS} be parabolically regular at $f(\ox)$ for every $\lambda\in\Lambda(\ox,\ov)$. Then whenever $w\in K_\O(\ox,\ov)$ the primal problem \eqref{pr} admits an optimal solution.
\end{Corollary}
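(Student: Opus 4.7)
The plan is to exhibit an explicit optimal solution of \eqref{pr} by exploiting the attainment result for the support function $-\sigma_{T^2_\O(\ox,w)}(\ov)$ that is built into parabolic regularity of $\O$. The ingredients are already in place: Theorem~\ref{epi} gives parabolic regularity of $\O$ at $\ox$ for $\ov$; Proposition~\ref{norm} (via (H2)--(H3)) upgrades $\ov\in N_\O(\ox)$ to $\ov\in N_\O^p(\ox)$; and (H1)--(H4) together with the second-order chain rule from Theorem~\ref{chain} yield parabolic derivability of $\O$ at $\ox$ for every $w\in K_\O(\ox,\ov)$. Hence all hypotheses of Theorem~\ref{pri}(ii) are met for the set $\O$ at $(\ox,\ov)$.

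Apply Theorem~\ref{pri}(ii): for the fixed $w\in K_\O(\ox,\ov)$ there exists $\ou\in T^2_\O(\ox,w)$ with
\begin{equation*}
\d^2\dd_\O(\ox,\ov)(w)=-\sigma_{T^2_\O(\ox,w)}(\ov)=-\la\ov,\ou\ra.
\end{equation*}
The chain rule \eqref{chr2} of Theorem~\ref{chain} (whose assumptions follow from (H1)--(H3) and normal regularity of the convex $\Th$) then characterizes membership in $T^2_\O(\ox,w)$, so
\begin{equation*}
\nabla f(\ox)\ou+\nabla^2f(\ox)(w,w)\in T^2_\Th\big(f(\ox),\nabla f(\ox)w\big),
\end{equation*}
i.e., $\ou$ is feasible for \eqref{pr}, and its objective value equals $-\la\ov,\ou\ra$.

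It remains to identify this value with the optimal value of \eqref{pr}. By the definition \eqref{pl06} of the value function $\vt$ and by \eqref{pr9} we have $\vt(0)=-\sigma_{T^2_\O(\ox,w)}(\ov)$, which coincides with $-\la\ov,\ou\ra$. Thus $\ou$ attains the infimum in \eqref{pr} and is an optimal primal solution, as claimed. The only conceptual step that is not entirely routine is ensuring that parabolic regularity of the composite set $\O$, rather than of $\Th$, is available so that Theorem~\ref{pri}(ii) can be invoked directly; this is precisely what Theorem~\ref{epi} provides, and the rest is bookkeeping via the second-order chain rule.
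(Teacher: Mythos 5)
Your proof is correct and follows essentially the same route as the paper: invoke Theorem~\ref{epi} to get parabolic regularity of $\O$ itself, then combine Theorem~\ref{pri}(ii) (whose hypotheses you rightly verify via Proposition~\ref{norm} and the parabolic derivability coming from (H4) and Theorem~\ref{chain}) with the value function identity \eqref{pr9} and the chain rule \eqref{chr2} to exhibit a feasible point attaining the optimal value. The only difference is that you spell out the bookkeeping that the paper compresses into one sentence.
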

\begin{proof} Theorem~\ref{epi} ensures that the indicator function $\dd_\O$ is parabolically regular at $\ox$ for $\ov$. The claimed existence of optimal solutions to \eqref{pr} follows from the combination  of Theorem~\ref{pri}(ii), the value function formula \eqref{pr9}, and the second-order tangent chain rule \eqref{chr2}.
\end{proof}

Let us compare the obtained duality results in Propositions~\ref{2dual} and \ref{dua} complemented by Corollary~\ref{osp} with those known before.

\begin{Remark}[\bf discussions on duality]{\rm The primal and dual problems \eqref{pr} and \eqref{dua1}, respectively, were considered in some different while equivalent form in \cite{bcs,bcs2,bs} under the metric regularity/Robinson constraint qualification. Weakening the latter to MSCQ \eqref{mscq} took a while in order to come to complete fruition. The first duality result under MSCQ appeared in \cite[Theorem~4.7]{hms} for the case where $\Th=\Q$, the second-order cone \eqref{Qm}. Then it was extended in \cite[Proposition~3.2]{gm17} to any closed convex cone with the analysis taken place at its vertex. Here we establish the duality relationships in generality for any convex sets that is parabolic derivable at the point in question. To the best of our knowledge, all the aforementioned results do not justify that the primal problem \eqref{pr} admits an optimal solution as it is done in Corollary~\ref{osp} when in addition we assume that the set $\Th$ under consideration is parabolically regular. This can significantly simplify the proof of the main results in \cite{hms,gm17}.}
\end{Remark}

Now we are ready to establish the twice epi-differentiability of the indicator functions associated with parabolically regular constraint systems \eqref{CS}.

\begin{Corollary}[\bf twice epi-differentiability for constraint systems]\label{tedi} Let $\O$ be given in \eqref{CS} under the basic assumptions {\rm(H1)--(H4)}, and let $w\in K_\O(\ox,\ov)$. Suppose in addition that $\Th$ is parabolically regular at $f(\ox)$ for every $\lambda\in\Lambda(\ox,\ov)$. Then the indicator function $\dd_\O$ is properly twice epi-differentiable at $\ox$ for $\ov$.
\end{Corollary}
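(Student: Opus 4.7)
The plan is to deduce this corollary directly from Theorem~\ref{pri92} by verifying, one at a time, each of the four hypotheses that theorem requires of the pair $(\Omega,\bar v)$. All the necessary pieces are already on the table: we only need to assemble them in the correct order.

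First I would check that $\bar v$ is a proximal normal to $\Omega$ at $\ox$. This is not immediate from assumption (H1), which only supplies $\bar v\in N_\Omega(\ox)$, but Proposition~\ref{norm} applies under the basic assumptions (H1)--(H3) (convexity of $\Theta$ together with MSCQ) and yields $N_\Omega^p(\ox)=N_\Omega(\ox)$. Hence $\bar v\in N_\Omega^p(\ox)$, which is the first prerequisite of Theorem~\ref{pri92}.

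Next I would verify that $\Omega$ is parabolically derivable at $\ox$ for every $w\in K_\Omega(\ox,\bar v)$. By the critical cone equivalence \eqref{krie}, if $w\in K_\Omega(\ox,\bar v)$ then $\nabla f(\ox)w\in K_\Theta(f(\ox),\bar\lambda)$ for every $\bar\lambda\in\Lambda(\ox,\bar v)$. Assumption (H4) then tells us that $\Theta$ is parabolically derivable at $f(\ox)$ for this vector $\nabla f(\ox)w$, and the parabolic derivability chain rule of Theorem~\ref{chain}, whose hypotheses reduce to MSCQ plus the normal regularity of $\Theta$ (which is automatic here by convexity), transfers this property from $\Theta$ to $\Omega$. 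Thus $\Omega$ is parabolically derivable at $\ox$ for every $w\in K_\Omega(\ox,\bar v)$, the second prerequisite of Theorem~\ref{pri92}.

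Third, the remaining hypothesis of Theorem~\ref{pri92}, namely parabolic regularity of $\Omega$ at $\ox$ for $\bar v$, is precisely the conclusion of Theorem~\ref{epi} applied under the extra assumption that $\Theta$ is parabolically regular at $f(\ox)$ for every $\lambda\in\Lambda(\ox,\bar v)$. With all three structural hypotheses in hand (closedness of $\Omega$ being trivial, $\bar v\in N_\Omega^p(\ox)$, parabolic derivability for all critical directions, and parabolic regularity), Theorem~\ref{pri92} can be invoked directly to conclude that $\delta_\Omega$ is properly twice epi-differentiable at $\ox$ for $\bar v$.

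The argument is essentially an assembly step rather than a proof with a single hard point; the only place where one must be careful is the bookkeeping between the ``proximal normal'' requirement of Theorem~\ref{pri92} and the more permissive ``basic normal'' setting of the present corollary, and this is exactly what Proposition~\ref{norm} is designed to handle. Everything else is a mechanical citation of results already proved in Sections~\ref{sect03} and~\ref{sect04}, so I do not anticipate any genuine obstacle beyond making sure the critical-cone equivalence \eqref{krie} is invoked to match the directional hypotheses between $\Theta$ and $\Omega$.
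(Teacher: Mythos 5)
Your proposal is correct and follows essentially the same route as the paper, whose proof is the one-line observation that the corollary is an immediate consequence of Theorem~\ref{pri92} and Theorem~\ref{epi}; you have simply made explicit the bookkeeping (Proposition~\ref{norm} for the proximal-normal upgrade, \eqref{krie} with (H4) and Theorem~\ref{chain} for parabolic derivability) that the paper leaves implicit.
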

\begin{proof}
This is an immediate consequence of Theorem~\ref{pri92} and Theorem~\ref{epi}.
\end{proof}

We conclude this section with the following discussions on twice epi-differentiability.

\begin{Remark}[\bf discussions on twice epi-differentiability]\label{2epi-diss} {\rm Twice epi-differentiability for extended-real-valued functions was introduced by Rockafellar in \cite{r88}, where this property was justified for fully amenable functions. In particular, it is shown therein that the indicator function of a fully amenable set, i.e., a set admitting representation \eqref{CS} with $\Th$ being a polyhedral convex set under the metric regularity constraint qualification, is twice epi-differentiable.  We are not familiar with any result of this type for \eqref{CS} when the set $\Th$ is merely parabolically regular.  Corollary~\ref{tedi} can be viewed as a far-going extension of the aforementioned result from \cite{r88} for indicator functions of constraint systems under parabolic regularity and MSCQ.}
\end{Remark}

\section{Further Properties of Parabolically Regular Sets}\sce\label{sect05a}

In this section we continue the study of parabolically regular sets while particularly using the results for constraint systems taken from Section~\ref{sect05}. This allows us to obtain new sufficient conditions for parabolic regularity by establishing its relationships with some notions of different types well understood and applied in variational analysis and optimization. Furthermore, in this way we obtain an intersection rule for parabolically regular sets, which ensures the preservation of parabolic regularity under intersections of sets with deriving precise formulas for calculating second-order tangents and second subderivatives of set intersections.\vspace*{0.05in}

Let us now recall the notion of ${\cal C}^2$-cone reducible sets that plays an important role in constrained optimization, especially in its second-order aspects; see Bonnans and Shapiro \cite{bs}.

\begin{Definition}[\bf reducible sets]\label{defr} A closed set $\O\subset\R^m$ is said to be ${\cal C}^2$-{\sc cone reducible} at $\oy\in\O$ to a closed convex
cone $\Xi\subset\R^s$  if there exist a neighborhood ${\cal U}\subset\R^m$ of $\oy$ and a ${\cal C}^2$-smooth mapping $h\colon\R^m\to\R^s$ such that
\begin{equation}\label{red}
\O\cap{\cal U}=\big\{y\in{\cal U}\big|\;h(y)\in\Xi\big\},\quad h(\oy)=0,\;\mbox{ and }\;\nabla h(\oy)\;\mbox{ has full rank }\;s.
\end{equation}
If this holds for all $\oy\in\O$, then we say that $\O$ is ${\cal C}^2$-cone reducible.
\end{Definition}

It is well known that the   set reducibility encompasses polyhedral convex sets and also important classes of nonpolyhedral ones including the second-order cone generating problems of {\em second-order cone programming} (SOCPs), the cone of positive semidefinite symmetric matrices in problems of {\em semidefinite programming} (SDPs), etc.; see \cite{bs}. We show now that ${\cal C}^2$-cone reducible are always parabolically regular and their indicator functions are twice epi-differentiable.

\begin{Theorem}[\bf parabolic regularity of ${\cal C}^2$-cone reducible sets]\label{twi20} Let $\O\subset\R^m$ be a closed set that is ${\cal C}^2$-cone reducible at $\oy\in\O$ to a closed convex cone $\Xi\subset\R^s$, and let $(\oy,\lm)\in\gph N_\O$. Then $\O$ is parabolically derivable at $\oy$ for any vector $ w \in T_\O(\oy)$ and parabolically regular at $\oy$ for $\lm$. Consequently, its indicator function $\delta_\O$ is properly twice epi-differentiable at $\oy$ for $\lm$ with the second subderivative calculated by
\begin{equation}\label{tepired}
\d^2\dd_\O(\oy,\lm)(w)=\left\{\begin{array}{ll}
\langle\mu,\nabla^2h(\bar y)(w,w)\rangle&\mbox{if }\;w\in K_\O(\oy,\lm),\\
\infty&\mbox{otherwise},
\end{array}
\right.
\end{equation}
where $\mu\in\R^s$ is the unique solution to the system
\begin{equation}\label{mubar}
\lm=\nabla h(\oy)^*\mu,\quad\mu\in N_\Xi\big(h(\oy)\big).
\end{equation}
\end{Theorem}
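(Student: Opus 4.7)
The plan is to cast $\mathcal{C}^2$-cone reducibility as an instance of the constraint system framework \eqref{CS} and then invoke Theorem~\ref{epi} together with Corollary~\ref{tedi}. Setting $f := h$ and $\Th := \Xi$ in \eqref{CS}, hypotheses (H1)--(H2) are immediate; the full row rank of $\nabla h(\oy)$ classically entails metric regularity of $y \mapsto h(y) - \Xi$ at $(\oy, 0)$, which is stronger than MSCQ, so (H3) holds with some $\kappa>0$. Full row rank also makes $\nabla h(\oy)^*$ injective, so together with $\lm \in N_\O(\oy) = \nabla h(\oy)^* N_\Xi(0)$ (from Proposition~\ref{2chin}, whose normal regularity hypothesis holds by convexity of $\Xi$) the system \eqref{mubar} admits a unique solution $\mu$, and $\Lambda(\oy,\lm) = \{\mu\}$.

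Next I will verify (H4) and parabolic regularity of $\Xi$ at $0$ for $\mu$. Since $\Xi$ is a closed convex cone, $T^2_\Xi(0, v) = T_\Xi(v)$ holds for every $v \in \Xi$; parabolic derivability of $\Xi$ at $0$ for $v$ is then produced by the continuous piecewise-linear arc $\xi(t) = t(v + (t/2)\,\tilde u(t/2))$, where $\tilde u$ is a continuous interpolation of selections $u_n \to u$ satisfying $v + s_n u_n \in \Xi$ along geometrically spaced $s_n \downarrow 0$; convexity of $\Xi$ combined with the cone property keeps $\xi$ inside $\Xi$ while delivering $\xi'_+(0) = v$ and $\xi''_+(0) = u$. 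For parabolic regularity of $\Xi$ at $0$ for $\mu$, convexity yields $\mu \in N^p_\Xi(0)$ with $r = 0$ in \eqref{proxn}, so Theorem~\ref{pri}(i) forces $\d^2\dd_\Xi(0,\mu) \ge 0$; on the other hand, for any $v \in K_\Xi(0,\mu) = \Xi \cap \mu^\perp$ the ray $t \mapsto tv$ lies in $\Xi$ with $\langle \mu, v\rangle = 0$, so $\Delta_t^2\dd_\Xi(0,\mu)(v) = 0$ and hence $\d^2\dd_\Xi(0,\mu)(v) = 0$ on $\dom \d^2\dd_\Xi(0,\mu) = K_\Xi(0,\mu)$. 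Choosing the constant approximating sequence $v_k \equiv v$ against any $t_k \downarrow 0$ then fulfills Definition~\ref{par-reg} trivially, including the boundedness requirement \eqref{par-reg1}.

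All hypotheses of Theorem~\ref{epi} and Corollary~\ref{tedi} are now in place, yielding parabolic regularity of $\O$ at $\oy$ for $\lm$, proper twice epi-differentiability of $\dd_\O$ at $\oy$ for $\lm$, and the chain-rule formula
\begin{equation*}
\d^2\dd_\O(\oy,\lm)(w) = \langle \mu, \nabla^2 h(\oy)(w,w) \rangle + \d^2\dd_\Xi(0,\mu)(\nabla h(\oy) w).
\end{equation*}
For $w \in K_\O(\oy,\lm)$, the critical cone characterization \eqref{krie} places $\nabla h(\oy) w$ in $K_\Xi(0,\mu)$, so the second term vanishes by the previous paragraph; for $w \notin K_\O(\oy,\lm)$, both sides are $+\infty$ by \eqref{critco}. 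Combining the two cases gives exactly \eqref{tepired}. Parabolic derivability of $\O$ at $\oy$ for arbitrary $w \in T_\O(\oy)$ then follows from the second-order tangent chain rule in Theorem~\ref{chain} coupled with parabolic derivability of $\Xi$ at $0$ for $\nabla h(\oy) w \in \Xi$.

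The main obstacle is the convex-cone parabolic derivability lemma invoked for (H4). Although $T^2_\Xi(0,v) = T_\Xi(v)$ makes non-emptiness of the second-order tangent set free, producing a genuinely continuous parabolic arc for an arbitrary $u \in T_\Xi(v)$ demands the piecewise-linear interpolation above, with carefully chosen geometrically spaced parameters (e.g., $s_n = 2^{-n}$) so that the interpolation error remains $o(t^2)$ uniformly across intervals. Everything after that is a routine application of Theorems~\ref{chain} and \ref{epi}, with the $\d^2\dd_\Xi$ contribution collapsing to zero on the critical cone via the cone-plus-convexity computation.
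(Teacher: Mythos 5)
Your proof is correct and follows essentially the same route as the paper: verify (H1)--(H4) for the reduction data $(h,\Xi)$, establish parabolic regularity of the cone $\Xi$ at its vertex via the constant-sequence argument, and then invoke Theorems~\ref{chain} and \ref{epi} together with Corollary~\ref{tedi}. The only cosmetic difference is that you extract \eqref{tepired} from the max formula \eqref{epi2} over the singleton $\Lambda(\oy,\lm)=\{\mu\}$, whereas the paper computes $\sigma_{T^2_\O(\oy,w)}(\lm)$ directly through the second-order tangent chain rule and applies Theorem~\ref{pri}(iii); both amount to the same computation.
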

\begin{proof} Since $h(\oy)=0$ and $\Xi$ is a cone, for any vector $u\in T_\Xi(h(\oy))$ we have $T_\Xi^2(h(\oy),u)=T_\Xi(u)$. This along with the convexity of
$\Xi$ tells us that $\Xi$ is parabolically derivable at $h(\oy)$ for any vector from $T_\Xi(h(\oy))$. Appealing now to Theorem~\ref{chain} ensures that the set $\O$ is parabolically derivable at $\oy$ for any vector $w\in T_\O(\oy)$.

Next let us prove that $\Xi$ is parabolically regular at $h(\oy)$ for the unique vector $\mu$ satisfying \eqref{mubar}. To do it, take any vector $w\in\R^s$ such that $\d^2\dd_\Xi(h(\oy),\mu)(w)<\infty$. We have by Theorem~\ref{pri}(i) that $w\in K_\Xi(h(\oy),\mu)=\Xi\cap\{\mu\}^\bot$. Denote $w_k:=w$ for all $k\in\N$ and take any sequence $t_k\dn 0$ as $k\to\infty$. This gives us the inequality
\begin{equation*}
0\le\d^2\dd_\Xi\big(h(\oy),\mu\big)(w)\le\lim_{k\to\infty}\Delta_{t_k}^2\dd_\Xi\big(h(\oy),\mu\big)(w_k)=0,
\end{equation*}
which shows that $\Delta_{t_k}^2\dd_\Xi(h(\oy),\mu)(w_k)\to\d^2\dd_\Xi(h(\oy),\mu)(w)$ as $k\to\infty$. Furthermore, we get
\begin{equation*}
\lim_{k\to\infty}\frac{\|w_k-w\|}{t_k}=0,
\end{equation*}
and therefore $\Xi$ is parabolically regular at $h(\oy)$ for $\mu$. It follows from Definition~\ref{red} of the ${\cal C}^2$-cone reducibility that the set $\O$  in question is represented as a constraint system in \eqref{CS}. Thus applying Theorem~\ref{epi} to this set ensures the parabolic regularity of $\O$ at $\oy$ for $\lm$. Using then the second-order tangent chain rule \eqref{chr2} from Theorem~\ref{chain} tells us that whenever $w\in T_\O(\oy)$ we have the equivalent representation
\begin{equation*}
u\in T^2_\O(\oy,w)\Longleftrightarrow\nabla h(\oy)u+\nabla^2h(\bar y)(w,w)\in T^2_\Xi\big(h(\oy),\nabla h(\oy)w\big)=T_\Xi\big(\nabla h(\oy)w\big).
\end{equation*}
This brings us in turn to the equalities
\begin{eqnarray*}
\sigma_{T^2_\O(\oy,w)}(\lm)&=&\sup\big\{\la\lm,u\ra\big|\;u\in T^2_\O(\oy,w)\big\}\\
&=&\sup\big\{\la\mu,\nabla h(\oy)u\ra\big|\;\nabla h(\oy)u+\nabla^2 h(\bar y)(w,w)\in T_\Xi\big(\nabla h(\oy)w\big)\big\}\\
&=&-\la\mu,\nabla^2h(\bar y)(w,w)\ra.
\end{eqnarray*}
Employing finally assertions (i) and (iii) of Theorem~\ref{pri} verifies the second subderivative formula \eqref{tepired} and thus completes the proof of the theorem.
\end{proof}

The obtained theorem encloses the class of ${\cal C}^2$-cone reducible sets into the collection of parabolically regular ones. As Example~\ref{ex-red} below shows, this inclusion is generally {\em strict}. Before proceeding with this example,
let us discuss a distinguished feature of ${\cal C}^2$-cone reducible sets that can shed more light on their differences with parabolically regular sets.

\begin{Remark}[\bf specification of reducible sets among parabolically regular ones]\label{spec-red}{\rm Let $\O\subset\R^m$ be convex, parabolically derivable at $\oy\in\O$ for every vector $w\in K_\O(\oy,\olm)$ with $\olm\in N_\O(\oy)$, and parabolically regular at $\oy$ for $\olm$. Then it follows from Theorem~\ref{pri}(ii) that for any $w\in K_\O(\oy,\olm)$ there exists a second-order tangent $u_w\in T^2_\O(\oy,\olm)$ such that $\d^2\dd_\O(\oy,\olm)(w)=-\la \olm,u_w\ra$. If we assume in addition that $\O$ is ${\cal C}^2$-cone reducible at $\oy$, then it follows from \eqref{tepired} and \eqref{mubar} that for any $w\in K_\O(\oy,\olm)$ we have
\begin{equation*}
\d^2\dd_\O(\oy,\olm)(w)=-\la\olm,u_w\ra\;\mbox{ with }\;  u_w:=- (\nabla h(\oy)\nabla h(\oy)^*\big)^{-1}\nabla^2h(\bar y)(w,w).
\end{equation*}
This indicates that if $\O$ is not just parabolically regular but ${\cal C}^2$-cone reducible at the reference point, then the second-order tangent $u_w$ that we find for any $w\in K_\O(\oy,\olm)$ is in fact a {\em quadratic} function of $w$. This seems to be a distinguished feature of ${\cal C}^2$-cone reducible sets in the class of all the parabolically regular ones.}
\end{Remark}

The following example constructs a closed and convex set in $\R^2$ that is parabolically regular while not ${\cal C}^2$-cone reducible at the origin.

\begin{Example}[\bf failure of ${\cal C}^2$-cone reducibility]\label{ex-red} {\rm Fix any $\al\in(1,2)$ and consider the set $\O:=\epi\ph\subset\R^2$, where the function $\ph\colon\R\to\R$ is defined by
\begin{equation*}
\ph(x):=\left\{\begin{matrix}
0&x\le 0,\\
x^{\alpha}&x\ge 0.
\end{matrix}\right.
\end{equation*}
It is easy to check that $\ph(0)=\ph'(0)=0$ while $\ph_{+}^{''}(0)=\infty$. Taking $\oy:=(0,0)\in\O$ and $\olm=(0,-1)$, we claim that the following hold:

{\bf(i)} $\O$ is a closed and convex set with
\begin{equation*}
T_\O(\oy)=\R\times\R_{+},\quad N_\O(\oy)=\R_+\olm,\quad K_\O(\oy,\olm)=\R\times\{0\}.
\end{equation*}

{\bf(ii)} $\O$ is parabolically derivable at $\oy$ for every vector $(w_1,0)\in K_\O(\oy,\olm)$ with $w_1\le0$, and for any vector in this form we have $T_\O^2(\oy,(w_1,0))=\R\times\R_{+}$.

{\bf(iii)} $\O$ is parabolically regular at $\ox$ for $\olm$.

{\bf(iv)} $T_\O^2(\oy,(1,0))=\emp$, and thus $\O$ is not parabolically derivable at $\oy$ for $(1,0)\in T_\O(\oy)$.\vspace*{0.05in}

The last statement demonstrates that $\O$ is not ${\cal C}^2$-cone reducible at $\oy$. Indeed, if the reducibility property is satisfied for $\O$ at $\oy$, then Theorem~\ref{twi20} implies that $\O$ must be parabolically derivable at $\oy$ for any tangent vector in $T_\O(\oy)$, which clearly contradicts (iv).

To verify our claims (i)--(iv), observe first that (i) follows directly from the definition of $\O$ and simple calculations.  To proceed with (ii), pick any  $(w_1,0)\in K_\O(\oy,\olm)$ with $w_1\le 0$ and $(u_1,u_2)\in\R\times\R_{+}$. Then it is not hard to check that for all $t>0$ sufficiently small we have
\begin{equation*}
\ph\big(tw_1+\sm t^2u_1\big)\le\sm t^2u_2+o(t^2)\;\mbox{ with }\;o(t^2):=\sm t^{2\alpha}|u_1|^{\alpha}.
\end{equation*}
This readily yields the inclusion
\begin{equation*}
\oy+t(w_1,0)+\sm t^2(u_1,u_2)+\big(0,o(t^2)\big)\in\epi\ph=\O
\end{equation*}
for all $t>0$ sufficiently small, which clearly implies that $(u_1,u_2)\in T_\O^2(\oy,(w_1,0))$.  On the other hand, it is easy to show that $T_\O^2(\oy,(w_1,0))\subset\R\times\R_+$, which proves (ii).

Turning to (iii), note that $\d^2\dd_\O(\bar y,\olm)(w)\ge 0$ for all $w\in\R^2$ due to Theorem~\ref{pri}(i) and convexity of $\O$. We intend to show that
\begin{equation}\label{domd2}
\dom\d^2\dd_\O(\bar y,\olm)=\big\{(w_1,0)\in K_\O(\oy,\olm)\big|\;w_1\le 0\big\}.
\end{equation}
To this end, pick $w=(w_1,0)\in K_\O(\oy,\olm)$ with $w_1\le 0$. It follows from Proposition~\ref{pard} that
\begin{equation*}
\d^2\dd_\O(\bar y,\olm)(w)\le-\sigma_{T^2_\O(\oy,w)}(\olm)=0,
\end{equation*}
which leads us to $\d^2\dd_\O(\bar y,\olm)(w)=-\sigma_{T^2_\O(\oy,w)}(\olm)=0$ and hence justifies $w\in\dom\d^2\dd_\O(\bar y,\olm)$.

Conversely, pick $w\in\dom\d^2\dd_\O(\bar y,\olm)$ and deduce from \eqref{domd} that $\dom\d^2\dd_\O(\bar y,\olm)\subset K_\O(\oy,\olm)$.
Fixing $w=(w_1,0)\in\R^2$ with $w_1>0$ and taking $\omega=(\omega_1,\omega_2)\to w$, we get
\begin{equation*}
\Delta_t^2\dd_\O(\oy,\olm)(\omega)=\frac{\dd_\O(\oy+t\omega)-\dd_\O(\oy)-t\langle\olm,\omega\rangle}{\sm t^2}
\ge\begin{cases}
\infty&\mbox{if}\;\;\oy+t\omega\notin\O,\\
\frac{2(\omega_1)^\al}{t^{2-\al}}&\mbox{if}\;\;\oy+t\omega\in\O.
\end{cases}
\end{equation*}
This together with $w_1>0$ and $\al\in (1,2)$ implies that $\d^2\dd_\O(\bar y,\olm)(w)=\infty$ and hence justifies \eqref{domd2}.
Now we pick $w=(w_1,0)\in\dom\d^2\dd_\O(\bar y,\olm)$ and get from \eqref{domd2} and the discussions above that $\d^2\dd_\O(\bar y,\olm)(w)=-\sigma_{T^2_\O(\oy,w)}(\olm)=0$. Combining this with \cite[Proposition~13.64]{rw} yields
\begin{equation*}
\d^2\dd_\O(\bar y,\olm)(w)=-\sigma_{T^2_\O(\oy,w)}(\olm)=\liminf_{\substack{t\dn 0,\,\omega\to w\\
[\omega-w]/t\,\,{\ss\mbox{bounded}}}}\Delta_t^2\dd_\O(\oy,\olm)(\omega),
\end{equation*}
which verifies therefore the parabolic regularity of $\O$ at $\oy$ for $\olm$.

It remains to verify (iv). Suppose on the contrary that there exists some second-order tangent $(u_1,u_2)\in T_\O^2(\oy,(1,0))$. This   gives us a sequence $t_k\dn 0$ with
\begin{equation*}
(0,0)+t_k(1,0)+\sm t_k^2(u_1,u_2)+o(t_k^2)\in\epi\ph,
\end{equation*}
which amounts to saying in turn that
\begin{equation*}
\frac{\ph\big(t_k+\sm t_k^2u_1+o(t_k^2)\big)-\ph(0)-t_k\ph'(0)}{\sm t_k^2}\le u_2+\frac{o(t_k^2)}{\sm t_k^2}.
\end{equation*}
Denote $s_k:=1+\sm t_k u_1+\frac{o(t_k^2)}{t_k}$ and get $s_k>0$ for all $k\in\N$ sufficiently large. This allows us to rewrite the above inequality in the equivalent form
\begin{equation*}
\frac{\ph(t_k s_k)-\ph(0)-(t_k s_k)\ph'(0)}{\sm(t_k s_k)^2}\le\frac{u_2}{s_k^2}+\frac{2}{s_k^2}\frac{o(t_k^2)}{t_k^2}.
\end{equation*}
Passing to the limit as $k\to\infty$ contradicts the fact that $\ph''_{+}(0)=\infty$ and thus completes the proof of (iv) and our consideration in this example.}
\end{Example}

We conclude this section by establishing the following intersection rules for parabolically regular sets and related second-order constructions.

\begin{Theorem}[\bf intersection rules for parabolically regular sets]\label{intrul} Let $\O_1$ and $\O_2$ be two closed and convex sets in $\R^n$, and let $\ox\in\O_1\cap\O_2$. Assume that there exist a constant $\kappa>0$ and a neighborhood $U$ of $\ox$ satisfying the metric qualification condition
\begin{equation}\label{mqc}
{\rm dist}(x;\O_1\cap\O_2)\le\kappa\big({\rm dist}(x;\O_1)+{\rm dist}(x;\O_2)\big)\;\mbox{ for all }\;x\in U.
\end{equation}
If both $\O_1$ and $\O_2$ are parabolically derivable for every vector $u\in T_{\O_1\cap\O_2}(\ox)$, then their intersection $\O_1\cap\O_2$ is also parabolically derivable at $\ox$ for every vector $u\in T_{\O_1\cap\O_2}(\ox)$, and we have the second-order tangent intersection rule
\begin{equation}\label{int1}
T_{\O_1\cap\O_2}^2(\ox,w)=T_{\O_1}^2(\ox,w)\cap T_{\O_2}^2(\ox,w)\;\mbox{ for all }\;w\in T_{\O_1}(\ox)\cap T_{\O_2}(\ox).
\end{equation}
Further, pick $\ov\in N_{\O_1\cap\O_2}(\ox)$ and define the set
\begin{equation*}
S(\ox,\ov):=\big\{(v_1,v_2)\in\R^{2n}\big|\:v_1+v_2=\ov,\:v_1\in N_{\O_1}(\ox),\;v_2\in N_{\O_2}(\ox)\big\}.
\end{equation*}
If for any pair $(v_1,v_2)\in S(\ox,\ov)$ the sets $\O_1$ and $\O_2$ are parabolically regular at $\ox$ for $v_1$ and $v_2$, respectively, then their intersection $\O_1\cap\O_2$ is parabolically regular at $\ox$ for $\ov$, and we have the second subderivative intersection rule
\begin{equation}\label{intformul}
\d^2\dd_{\O_1\cap\O_2}(\ox,\ov)(w)=\max_{(v_1,v_2)\in S(\ox,\ov)}\big\{\d^2\dd_{\O_1}(\ox,v_1)(w)+\d^2\dd_{\O_2}(\ox,v_2)(w)\big\}.
\end{equation}
\end{Theorem}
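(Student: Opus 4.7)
The plan is to reduce the entire theorem to the constraint system framework already developed in the paper by representing the intersection as
\begin{equation*}
\O_1\cap\O_2=\big\{x\in\R^n\,\big|\,f(x)\in\Th\big\}\quad\mbox{ with }\quad f(x):=(x,x)\;\;\mbox{ and }\;\;\Th:=\O_1\times\O_2\subset\R^{2n}.
\end{equation*}
Once this reformulation is in place, Theorems~\ref{chain} and \ref{epi} do the work, provided the hypotheses translate correctly. First I would verify that MSCQ for this system amounts to \eqref{mqc}. Indeed, $\dist((x,x);\O_1\times\O_2)^2=\dist(x;\O_1)^2+\dist(x;\O_2)^2$, so the distance estimate $\dist(x;\O_1\cap\O_2)\le\kappa[\dist(x;\O_1)+\dist(x;\O_2)]$ is equivalent, up to a constant factor, to $\dist(x;\O_1\cap\O_2)\le\kappa'\,\dist(f(x);\Th)$. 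Normal regularity of $\Th$ at $(\ox,\ox)$ holds by convexity. The basic assumption (H2) is immediate since $f$ is linear, and the first-order chain rule from Proposition~\ref{2chin} gives $T_{\O_1\cap\O_2}(\ox)=T_{\O_1}(\ox)\cap T_{\O_2}(\ox)$.

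Next I would apply Theorem~\ref{chain}. Since $f$ is linear we have $\nabla^2 f(\ox)=0$, and the product structure yields $T^2_{\O_1\times\O_2}((\ox,\ox),(w,w))=T^2_{\O_1}(\ox,w)\times T^2_{\O_2}(\ox,w)$. Inserting these into the second-order tangent chain rule \eqref{chr2} gives
\begin{equation*}
u\in T^2_{\O_1\cap\O_2}(\ox,w)\;\Longleftrightarrow\;(u,u)\in T^2_{\O_1}(\ox,w)\times T^2_{\O_2}(\ox,w),
\end{equation*}
which is precisely \eqref{int1}. The parabolic derivability of both $\O_1$ and $\O_2$ at $\ox$ for every $w\in T_{\O_1\cap\O_2}(\ox)\subset T_{\O_1}(\ox)\cap T_{\O_2}(\ox)$ transfers to parabolic derivability of $\Th$ at $(\ox,\ox)$ for $(w,w)$ (concatenate the two parabolic arcs into a single arc in the product), so Theorem~\ref{chain} delivers the parabolic derivability of $\O_1\cap\O_2$ at $\ox$ for $w$.

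For the parabolic regularity and the second subderivative formula, I would apply Theorem~\ref{epi}. The Lagrange multiplier set \eqref{lagn} for the reformulated system reads
\begin{equation*}
\Lambda(\ox,\ov)=\big\{(\lambda_1,\lambda_2)\in N_{\O_1}(\ox)\times N_{\O_2}(\ox)\,\big|\,\lambda_1+\lambda_2=\ov\big\}=S(\ox,\ov).
\end{equation*}
Parabolic regularity of $\Th$ at $(\ox,\ox)$ for each $(\lambda_1,\lambda_2)\in S(\ox,\ov)$ is equivalent to parabolic regularity of each $\O_i$ at $\ox$ for $\lambda_i$, since $\dd_{\O_1\times\O_2}(x_1,x_2)=\dd_{\O_1}(x_1)+\dd_{\O_2}(x_2)$ forces the corresponding product decomposition
\begin{equation*}
\d^2\dd_{\O_1\times\O_2}\big((\ox,\ox),(\lambda_1,\lambda_2)\big)(w,w)=\d^2\dd_{\O_1}(\ox,\lambda_1)(w)+\d^2\dd_{\O_2}(\ox,\lambda_2)(w).
\end{equation*}
Thus all hypotheses of Theorem~\ref{epi} are satisfied, and formula \eqref{epi2}, combined with $\nabla^2f(\ox)=0$ and the above decomposition, reduces exactly to \eqref{intformul}, while Theorem~\ref{epi} simultaneously delivers parabolic regularity of $\O_1\cap\O_2$ at $\ox$ for $\ov$.

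The main obstacle will be confirming in a clean way that every product-space object (tangent cones, second-order tangents, critical cones, Lagrange multipliers, second subderivatives, and parabolic regularity itself) decomposes into its $\O_1$ and $\O_2$ counterparts; these decompositions are natural but need to be verified from the definitions so that the hypotheses of Theorems~\ref{chain} and \ref{epi} are met. Once these product identities are established, the intersection rule is a direct corollary of the constraint system calculus with MSCQ being a verbatim translation of the metric qualification condition \eqref{mqc}.
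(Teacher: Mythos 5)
Your proposal is correct and follows essentially the same route as the paper: both proofs realize $\O_1\cap\O_2$ as the constraint system $f(x)=(x,x)\in\Th:=\O_1\times\O_2$, identify \eqref{mqc} with MSCQ and $S(\ox,\ov)$ with the multiplier set $\Lambda(\ox,\ov)$, and then invoke Theorems~\ref{chain} and \ref{epi}. The one step you state too quickly is the transfer of parabolic regularity to the product $\O_1\times\O_2$: the additive structure of $\dd_{\O_1\times\O_2}$ only gives the inequality ``$\ge$'' between the second subderivative of the product and the sum of the individual second subderivatives (a liminf of a sum versus a sum of liminfs, and for general non-diagonal critical vectors $(w_1,w_2)$, not just $(w,w)$), and the paper closes this gap exactly where you flag the remaining verification, namely by combining Proposition~\ref{pard} with the support-function characterization of parabolic regularity in Theorem~\ref{pri}(iii).
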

\begin{proof}
We know from Bauschke et al. \cite[Theorem~3]{bbl} that the metric qualification condition \eqref{mqc} ensures the (first-order) tangent and normal intersection rules for convex sets:
\begin{equation*}
T_{\O_1\cap\O_2}(\ox)=T_{\O_1}(\ox)\cap T_{\O_2}(\ox)\;\mbox{ and }\;N_{\O_1\cap\O_2}(\ox)=N_{\O_1}(\ox)+N_{\O_2}(\ox).
\end{equation*}
Define further the set $\O\subset\R^n$ by
\begin{equation*}
\O:=\O_1\cap\O_2:=\big\{x\in\R^n\big|\:(x,x)\in\O_1\times\O_2\big\}
\end{equation*}
and observe that $\O$ belongs to the class of {\em constraint systems} \eqref{CS} with $\Th:=\O_1\times\O_2$, ${\cal O}=\R^n$, and $f(x):=(x,x)$. Furthermore, it is not hard to check that \eqref{mqc} amounts to saying that the constraint mapping $x\mapsto f(x)-\Th$ is {\em metrically subregular} at $((\ox,\ox),0)$. It follows from the definitions that $T_{\O_1\times\O_2}(\ox,\ox)=T_{\O_1}(\ox)\times T_{\O_2}(\ox)$ and that
\begin{equation}\label{pj01}
T_{\O_1\times\O_2}^2\big((\ox,\ox),(w_1,w_2)\big)=T_{\O_1}^2(\ox,w_1)\times T_{\O_2}^2(\ox,w_2)\;\mbox{ for all }\;(w_1,w_2)\in T_{\O_1\times\O_2}(\ox,\ox).
\end{equation}
Pick now $w\in T_{\O_1\cap\O_2}(\ox)$ and note that $T^2_\Th(f(\ox),\nabla f(\ox)w)=T_{\O_1}^2(\ox,w)\times T_{\O_2}^2(\ox,w)$. Using this, the second-order tangent chain rule \eqref{chr2} from Theorem~\ref{chain}, and the obvious fact that $\nabla^2f(\ox)=0$ yields the representation
\begin{equation*}
u\in T_\Th^2(\ox,w)\iff(u,u)=\nabla f(\ox)u\in T_{\O_1}^2(\ox,w)\times T_{\O_2}^2(\ox,w),
\end{equation*}
which clearly justifies the claimed intersection rule \eqref{int1} for the second-order tangent sets. Since $\O_1$ and $\O_2$ are parabolic derivable at $\ox$ for the selected vector $w$, so is $\Th$ at $f(\ox)=(\ox,\ox)$ for $\nabla f(\ox)w=(w,w)$. Employing again Theorem~\ref{chain} tells us that $\O$ is parabolically regular at $\ox$ for $w$, which therefore proves the first part of the theorem.

We turn next to verifying that the set intersection $\O_1\cap\O_2$ is parabolically regular at $\ox$ for $\ov$. To this end, observe that the collection of Lagrange multipliers $\Lambda(\ox,\ov)$ from \eqref{lagn} in the setting under consideration can be equivalently expressed as
\begin{eqnarray*}
\Lambda(\ox,\ov)&=&\big\{(v_1,v_2)\big|\:\nabla f(\ox)^*(v_1,v_2)=\ov,\:(v_1,v_2)\in N_{\O_1\times\O_2}(\ox)\big\}\\
&=&\big\{(v_1,v_2)\big|\:v_1+v_2=\ov,\:v_1\in N_{\O_1}(\ox),\;v_2\in N_{\O_2}(\ox)\big\}=S(\ox,\ov).
\end{eqnarray*}
Pick $(v_1,v_2)\in S(\ox,\ov)$ and deduce from \eqref{pj01} and the support function definition that
\begin{eqnarray*}
\sigma_{\ss T_{\Th}^2((\ox,\ox),(w_1,w_2))}(v_1,v_2)=\sigma_{\ss T_{\O_1}^2(\ox,w_1)}(v_1)+\sigma_{\ss T_{\O_2}^2(\ox,w_2)}(v_2)\;\mbox{ for all }\;(w_1,w_2)\in T_{\O_1\times\O_2}(\ox,\ox).
\end{eqnarray*}
To prove that $\Th=\O_1\times\O_2$ is parabolically regular at $(\ox,\ox)$ for $(v_1,v_2)$, pick $(w_1,w_2)\in K_{\Th}((\ox,\ox),(v_1,v_2))$ and observe that $K_{\Th}((\ox,\ox), (v_1,v_2))=K_{\O_1}(\ox,v_1)\times K_{\O_2}(\ox,v_2)$. Since the sets $\O_i$ are parabolically regular at $\ox$ for $v_i$ as $i=1,2$, we deduce from Theorem~\ref{pri}(iii) that
\begin{equation*}
\d^2\dd_{\O_1}(\ox,v_1)(w_1)=-\sigma_{\ss T_{\O_1}^2(\ox,w_1)}(v_1)\;\mbox{ and }\;\d^2\dd_{\O_2}(\ox,v_2)(w_2)=-\sigma_{\ss T_{\O_2}^2(\ox,w_2)}(v_2).
\end{equation*}
It follows directly from definition \eqref{ssd} of the second subderivative that
\begin{equation*}
\d^2\dd_{\O_1}(\ox,v_1)(w_1)+\d^2\dd_{\O_2}(\ox,v_2)(w_2)\le\d^2\dd_{\Th}\big((\ox,\ox),(v_1,v_2)\big)(w_1,w_2).
\end{equation*}
On the other hand, Proposition~\ref{pard} leads us to the relationships
\begin{eqnarray*}
\d^2\dd_{\Th}\big((\ox,\ox),(v_1,v_2)\big)(w_1,w_2)&\le&-\sigma_{\ss T_{\Th}^2((\ox,\ox),(w_1,w_2))}(v_1,v_2)=-\big(\sigma_{\ss T_{\O_1}^2(\ox,w_1)}(v_1)+\sigma_{\ss T_{\O_2}^2(\ox,w_2)}(v_2)\big).
\end{eqnarray*}
Combining all of this, we arrive at the equality
\begin{equation*}
\d^2\dd_{\Th}\big((\ox,\ox),(v_1,v_2)\big)(w_1,w_2)=-\sigma_{\ss T_{\Th}^2((\ox,\ox),(w_1,w_2))}(v_1,v_2)\;\mbox{ for all }\;(w_1,w_2)\in K_{\Th}\big((\ox,\ox),(v_1,v_2)\big)
\end{equation*}
and thus conclude from Theorem~\ref{pri}(iii) that $\O=\O_1\cap\O_2$ is parabolically regular at $\ox$ for $\ov$. Applying Theorem \ref{epi} to the sets $\O$ written as a constraint system \eqref{CS} with $\Th=\O\times\O_2$ and taking into account that $\Th$ is parabolically regular at $(\ox,\ox)$ for any pair $(v_1,v_2)\in S(\ox,\ov)$ verify that $\O$ is parabolic regular at $\ox$ for $\ov$. Finally, the intersection rule \eqref{intformul} for the second subderivative of $\dd_{\O_1\cap\O_2}$ is an adaptation of \eqref{epi2} to the setting under consideration. This completes the proof of the theorem.
\end{proof}

Let us mention that somewhat related intersection results can be found in \cite[Theorem~3.90]{bs} for the second-order regular sets in the sense therein. Using the notation of Theorem~\ref{intrul}, the qualification condition utilized in \cite[Theorem~3.90]{bs} reads as $\mbox{int}\,\O_1\cap\O_2\ne\emp$, which is the standard qualification condition in convex analysis. It has been well recognized that the metric qualification condition \eqref{mqc} is much weaker than the latter interiority one.

\section{Second-Order Optimality Conditions with Quadratic Growth}\sce \label{sect06}

This section addresses applications of the developed theory of parabolic regularity and twice epi-differentiability to deriving new second-order optimality conditions in problems of constrained optimization. The problem under consideration here is formulated as follows:
\begin{equation}\label{coop}
\min_{x\in\R^n}\ph(x)\;\mbox{ subject to }\;f(x)\in\Th,
\end{equation}
where $\ph\colon\R^n\to\R$, $f\colon\R^n\to\R^m$, and $\Th\subset\R^m$. Throughout this and next sections, we assume that $\ph$ and $f$ are twice differentiable at the reference points, and that $\Th$ is a closed convex set. The constrained problem \eqref{coop} can be rewritten in the unconstrained optimization format
\begin{equation}\label{coop2}
\min_{x\in\R^n}\ph(x)+\dd_\O(x)\;\mbox{ with }\;\O:=\big\{x\in\R^n\big|\;f(x)\in\Th\big\}.
\end{equation}
The set $\O$ defined in \eqref{coop2} is a constraint system in the form of \eqref{CS} with ${\cal O}=\R^m$ therein. The {\em Lagrangian} function associated with \eqref{coop} is defined in the conventional way as $L(x,\lm):=\ph(x)+\la\lm,f(x)\ra$ for any pair $(x,\lm)\in\R^n\times\R^m$.

The next theorem collects the main results of this section while providing {\em no-gap} second-order optimality conditions for the constrained problem \eqref{coop} with a parabolically regular set $\Th$. Recall that by no-gap conditions we understand a pair of optimality conditions where the sufficient condition differs from the corresponding necessary condition by replacing the nonstrict inequality in the latter with the strict one. In fact, the obtained second-order sufficient condition offer more; namely, a {\em quadratic growth} of the cost function that is strongly used below.

\begin{Theorem}[\bf no-gap second-order optimality conditions under parabolic regularity]\label{nsop1} Let $\ox$ be a feasible solution to problem \eqref{coop}, and let $\ov:=-\nabla\ph(\ox)$. In addition to the basic assumptions {\rm(H1)--(H4)} imposed on $\O$ from \eqref{coop2}, suppose that the set $\Th$ in \eqref{coop} is parabolically regular at $f(\ox)$ for every $\lambda\in\Lambda(\ox,\ov)$ from \eqref{lagn}. Then we have the following second-order optimality conditions for the constrained problem \eqref{coop}:

{\bf(i)} If $\ox$ is a local minimizer of \eqref{coop}, then the second-order necessary condition
\begin{equation}\label{nopc1}
\max_{\lm\in\Lambda(\ox,\ov)}\big\{\langle\nabla_{xx}^2L(\bar x,\lm)w,w\rangle+\d^2\dd_\Th\big(f(\bar x),\lm\big)\big(\nabla f(\ox)w\big)\big\}\ge 0
\end{equation}
is satisfied for all $w\in K_\O(\ox,\ov)$.

{\bf(ii)} The validity of the second-order sufficient condition
\begin{equation}\label{sscc}
\max_{\lm\in\Lambda(\ox,\ov)}\big\{\langle\nabla_{xx}^2L(\bar x,\lm)w,w\rangle+\d^2\dd_\Th\big(f(\bar x),\lm\big)\big(\nabla f(\ox)w\big)\big\}> 0\;\mbox{ when }\; w\in K_\O(\ox,\ov)\setminus\{0\}
\end{equation}
amounts to the existence of positive constants $\ell$ and $\ve$ such that the quadratic growth condition
\begin{equation}\label{gro}
\psi(x)\ge\psi(\ox)+\frac{\ell}{2}\|x-\ox\|^2\;\mbox{ for all }\;x\in\B_{\ve}(\ox)
\end{equation}
holds, where $\psi:=\ph+\dd_\Th\circ f$ is the cost function in \eqref{coop2}. Thus $\ox$ provides is a strict local minimum for the constrained optimization problem \eqref{coop}.
\end{Theorem}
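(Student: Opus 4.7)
The natural strategy is to pass to the unconstrained problem of minimizing $\psi=\ph+\dd_\O$ on $\R^n$ and then invoke the second-order machinery of Sections~\ref{sect03}--\ref{sect05}. The hypothesis $\ov=-\nabla\ph(\ox)\in N_\O(\ox)$ from (H1) is precisely the stationarity relation $0\in\sub\psi(\ox)$. The central ingredient is the identification of $\d^2\psi(\ox,0)$. Since $\ph$ is twice differentiable at $\ox$, one verifies directly from the definition that $\Delta_t^2\ph(\ox,-\ov)(w_t)\to\la\nabla^2\ph(\ox)w,w\ra$ whenever $t\dn 0$ and $w_t\to w$. Combined with the proper twice epi-differentiability of $\dd_\O$ at $\ox$ for $\ov$ supplied by Corollary~\ref{tedi}, this yields the sum rule
\[
\d^2\psi(\ox,0)(w)=\la\nabla^2\ph(\ox)w,w\ra+\d^2\dd_\O(\ox,\ov)(w)\;\mbox{ for every }\;w\in\R^n
\]
together with proper twice epi-differentiability of $\psi$ at $\ox$ for $0$. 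Plugging in Theorem~\ref{epi} and using $\nabla_{xx}^2 L(\ox,\lm)=\nabla^2\ph(\ox)+\sum_{i}\lm_i\nabla^2 f_i(\ox)$ then converts this into
\[
\d^2\psi(\ox,0)(w)=\max_{\lm\in\Lambda(\ox,\ov)}\bigl\{\la\nabla_{xx}^2 L(\ox,\lm)w,w\ra+\d^2\dd_\Th\bigl(f(\ox),\lm\bigr)\bigl(\nabla f(\ox)w\bigr)\bigr\}
\]
for all $w\in\R^n$, with both sides equal to $+\infty$ off $K_\O(\ox,\ov)$ by Proposition~\ref{ule} and the parallel domain identity for $\d^2\dd_\Th$.

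\textbf{Part (i).} This reduces to the elementary second-order necessary condition at an unconstrained local minimizer. If $\ox$ minimizes $\psi$ locally then, for every $w\in\R^n$ and every sequences $t_k\dn 0$, $w_k\to w$ with $\ox+t_kw_k\in\O$, the quotient $\Delta_{t_k}^2\psi(\ox,0)(w_k)$ is nonnegative; taking $\liminf$ yields $\d^2\psi(\ox,0)(w)\ge 0$ for all $w\in\R^n$. Restricting to $w\in K_\O(\ox,\ov)$ and substituting the max formula produces \eqref{nopc1}.

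\textbf{Part (ii).} The max formula renders the sufficient condition \eqref{sscc} equivalent to $\d^2\psi(\ox,0)(w)>0$ on $K_\O(\ox,\ov)\setminus\{0\}$; since $\d^2\psi(\ox,0)\equiv+\infty$ outside $K_\O(\ox,\ov)$, this amounts to $\d^2\psi(\ox,0)(w)>0$ for all $w\ne 0$. Positive homogeneity of degree $2$ together with lower semicontinuity from Proposition~\ref{ssp} and a standard compactness argument on the unit sphere then produce a constant $\ell>0$ with $\d^2\psi(\ox,0)(w)\ge\ell\|w\|^2$ on $\R^n$. The proper twice epi-differentiability of $\psi$ at $\ox$ for $0$ established above, combined with the classical characterization of quadratic growth via the second subderivative (cf.\ \cite[Theorem~13.24(c)]{rw}), upgrades this pointwise bound to the uniform estimate \eqref{gro} on some ball $\B_\ve(\ox)$, forcing $\ox$ to be a strict local minimizer. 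Conversely, \eqref{gro} yields $\Delta_t^2\psi(\ox,0)(w_t)\ge\ell\|w_t\|^2$ for small $t$, hence $\d^2\psi(\ox,0)(w)\ge\ell\|w\|^2$, and the max formula returns \eqref{sscc}.

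\textbf{Main obstacle.} The real content sits in Part (ii): upgrading pointwise positivity of $\d^2\psi(\ox,0)$ on the critical cone to the quantitative growth \eqref{gro}. This step truly needs the twice epi-differentiability of $\psi$ derived in the first paragraph (and hence Corollary~\ref{tedi})---pointwise positivity of the second subderivative alone, without the underlying epi-convergence, is insufficient to control $\psi$ uniformly near $\ox$. Everything else---the sum rule for $\psi$, the identification of the multiplier set $\Lambda(\ox,\ov)$, and the necessary-condition argument in Part (i)---is essentially bookkeeping once Theorem~\ref{epi} and Corollary~\ref{tedi} are available.
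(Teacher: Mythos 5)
Your proof is correct and follows essentially the same route as the paper: pass to $\psi=\ph+\dd_\O$, use Corollary~\ref{tedi} and the sum rule to identify $\d^2\psi(\ox,0)$ via Theorem~\ref{epi}, then invoke \cite[Theorem~13.24]{rw} for both the necessary condition and the quadratic-growth equivalence. One caveat on your closing ``Main obstacle'' remark: the equivalence between $\d^2\psi(\ox,0)(w)>0$ for $w\ne 0$ and the growth estimate \eqref{gro} does \emph{not} require twice epi-differentiability of $\psi$ --- it holds for an arbitrary proper function (which is exactly how the paper uses \cite[Theorem~13.24(c)]{rw}), since the $\liminf$ definition \eqref{ssd} together with lower semicontinuity and compactness of the unit sphere already upgrades pointwise positivity to the uniform bound; the epi-differentiability supplied by Corollary~\ref{tedi} is what underwrites the \emph{exact} max formula for $\d^2\dd_\O(\ox,\ov)$ via Theorem~\ref{epi}, not the growth characterization itself.
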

\begin{proof} To verify (i), we get from the imposed assumptions in the theorem and Proposition~\ref{2chin} that $\ov=-\nabla\ph(\ox)\in N_\O(\ox)$ and then
\begin{equation*}
0\in\nabla\ph(\ox)+N_\O(\ox)=\sub(\ph+\dd_\O)(\ox)=\sub\psi(\ox).
\end{equation*}
Employing Corollary~\ref{tedi} tells us that the indicator function $\dd_\O$ is properly twice epi-differentiable at $\ox$ for $\ov$. Using this and the assumed twice differentiability of $\ph$ at $\ox$, it is easy to derive from the definitions the following second subderivative sum rule:
\begin{equation}\label{ns1}
\d^2(\ph+\dd_\O)(\ox,0)(w)=\langle\nabla^2\ph(\ox)w,w\rangle+\d^2\dd_\O(\ox,\ov)(w)\;\mbox{ for all }\;w\in\R^n.
\end{equation}
Since $\ox$ is a local minimizer of $\psi=\ph+\dd_\O$, it follows from \cite[Theorem~13.24(a)]{rw} that $\d^2\psi(\ox,0)(w)\ge 0$ for all $w\in\R^n$. Applying then the second subderivative calculation for $\dd_\O$ from \eqref{epi2} and the second subderivative sum rule \eqref{ns1} readily justifies assertion (i). Observe that due to \eqref{critco} we have $\dom\d^2\dd_\O(\ox,\ov)=K_\O(\ox,\ov)$ and thus do not need to consider vectors $w\notin K_\O(\ox,\ov)$ in the second-order necessary optimality condition \eqref{nopc1}.

To proceed next with the proof of (ii), deduce from the proof of \cite[Theorem~13.24(c)]{rw} in the general unconstrained framework of minimizing an arbitrary proper function $\psi\colon\R^n\to\oR$ that the second-order condition $\d^2\psi(\ox,0)(w)>0$ for all $w\in\R^n\setminus\{0\}$ amounts to the existence of positive constants $\ell$ and $\ve$ such that the quadratic growth condition \eqref{gro} is satisfied. Taking into account the particular form of our function $\psi$ and combining it with the second subderivative sum rule \eqref{ns1}, the representation $\dom\d^2\dd_\O(\ox,\ov)=K_\O(\ox,\ov)$, and the second subderivative calculation for $\dd_\O$ in \eqref{epi2} verify assertion (ii) and thus complete the proof of the theorem.
\end{proof}

Some commentaries on second-order optimality conditions are now in order.

\begin{Remark}[\bf discussions on second-order optimality conditions]\label{opt-disc} {\rm Observe the following:

{\bf(i)} The second-order sufficient condition \eqref{sscc} can be equivalently expressed via the existence of $\ell>0$ for which we have the estimate
\begin{equation*}
\max_{\lm\in\Lambda(\ox,\ov)}\big\{\langle\nabla_{xx}^2L(\bar x,\lm)w,w\rangle+\d^2\dd_\Th\big(f(\bar x,\lm\big)\big(\nabla f(\ox)w\big)\big\}\ge\ell\|w\|^2\;\mbox{ whenever }\;w\in K_\O(\ox,\ov)\setminus\{0\}.
\end{equation*}
This is due to the fact that the second subderivative is l.s.c.\ by Proposition~\ref{ssp}(i). Labeling as $\Hat\ell$ the best/largest constant $\ell$ satisfying the above condition, we can easily compute it by
\begin{eqnarray*}
\Hat\ell&=&\min_{w\in S}\d^2(\ph+\dd_\O)(\ox,0)(w)\\
&=&\min_{w\in[K_\O(\ox,\ov)\cap S]}\max_{\lm\in\Lambda(\ox,\ov)}\big\{\langle\nabla_{xx}^2L(\bar x,\lm)w,w\rangle+\d^2\dd_\Th\big(f(\bar x),\lm\big)\big(\nabla f(\ox)w\big)\big\},
\end{eqnarray*}
where $S:=\{w\in\R^n|\;\|w\|=1\}$ stands for the unit sphere in $\R^n$.

{\bf(ii)} Second-order optimality conditions in constrained optimization have been studied in the literature under certain second-order regularity assumptions and different constraint qualifications. Let us mention those obtained in \cite{bcs} for problems \eqref{coop} with second-order regular sets $\Th$ under the metric regularity/Robinson constraint qualification; see \cite[Chapter~3]{bs} for more details. Quite recently no-gap second-order optimality conditions were derived in \cite{chnt} for constraint problems of type \eqref{coop} generated by ${\cal C}^2$-cone reducible sets $\Th$ under the metric subregularity constraint qualification. The latter qualification condition was also used in our paper \cite{mms} for similar problems of composite optimization with fully subamenable constraint functions. All of the aforementioned results are strict consequences of Theorem~\ref{nsop1}. Furthermore, the approach developed here, which is mainly based on parabolic regularity and second subderivative calculus under MSCQ, is fundamentally different from those mentioned above. It allows us to not only establish the strongest no-gap second-order optimality conditions for a large class of problems in constrained optimization, but also to unify previously known developments in this direction.}
\end{Remark}

To conclude this section, we present yet another second-order sufficient optimality condition for problem \eqref{coop} that is of type \eqref{nopc1} but is obtained under different assumptions. Note that we do not impose now any constraint qualification while assuming instead the validity of a first-order necessary optimality condition in the Karush-Kuhn-Tucker (KKT) form. The obtained result is particularly useful for the study of augmented Lagrangians in the next section.

\begin{Proposition}[\bf second-order sufficient condition without constraint qualifications]\label{sos} Let $\ox$ be a feasible solution to problem \eqref{coop}. Suppose that the pair $(\ox,\olm)$ satisfies the KKT system
\begin{equation}\label{kkt}
\nabla_x L(\ox,\olm)=0,\quad\olm\in N_\Th\big(f(\ox)\big),
\end{equation}
and that the set $\Th$ is parabolically derivable at $f(\ox)$ for every critical cone vector $u\in K_\Th(f(\ox),\olm)$. Assume also that the second-order condition
\begin{equation}\label{2gro}
\la\nabla_{xx}^2L(\bar x,\olm)w,w\ra+\d^2\dd_\Th\big(f(\ox),\olm\big)\big(\nabla f(\ox)w\big)>0
\end{equation}
is satisfied for all $w\in\R^n\setminus\{0\}$ with $\nabla f(\ox)w\in K_\Th(f(\ox),\olm)$. Then there exist positive constants $\ve$ and $\ell$ such
that the quadratic growth condition \eqref{gro} holds while ensuring in particular that $\ox$ is a strict local minimizer for problem \eqref{coop}.
\end{Proposition}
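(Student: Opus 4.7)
The plan is to argue by contradiction. Assume the quadratic growth conclusion \eqref{gro} fails for every choice of $\ve,\ell>0$. Since $\psi=\ph+\dd_\Th\circ f$ is finite only on $\O$, one extracts a sequence $x_k\in\O\setminus\{\ox\}$ with $x_k\to\ox$ and $\ph(x_k)-\ph(\ox)<\tfrac{1}{2k}\|x_k-\ox\|^2$. Normalize by $t_k:=\|x_k-\ox\|\dn 0$ and $w_k:=(x_k-\ox)/t_k$; after passing to a subsequence, $w_k\to w$ with $\|w\|=1$. The goal is to show that $\nabla f(\ox)w\in K_\Th(f(\ox),\olm)$ and that the left-hand side of \eqref{2gro} evaluated at $w$ is $\le 0$, directly contradicting the sufficient condition since $w\ne 0$.

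To build that contradiction, I would perform twice-differentiable Taylor expansions of both $\ph$ and $f$ at $\ox$ and combine them with the two structural hypotheses. The KKT identity $\nabla\ph(\ox)=-\nabla f(\ox)^*\olm$ converts the $\ph$-inequality coming from the contradiction assumption into one affine in $\langle\olm,\nabla f(\ox)w_k\rangle$; at the same time, $f(x_k)\in\Th$ together with convexity of $\Th$ and $\olm\in N_\Th(f(\ox))$ gives $\langle\olm,f(x_k)-f(\ox)\rangle\le 0$, whose Taylor form is a second inequality affine in $\langle\olm,\nabla f(\ox)w_k\rangle$ but with opposite sign. Dividing the two inequalities by $t_k$ and passing to the limit squeezes $\langle\olm,\nabla f(\ox)w\rangle$ between $0$ from above and $0$ from below, so $\langle\olm,\nabla f(\ox)w\rangle=0$; together with $\nabla f(\ox)w=\lim(f(x_k)-f(\ox))/t_k\in T_\Th(f(\ox))$, this places $\nabla f(\ox)w$ in $K_\Th(f(\ox),\olm)$.

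The decisive step is to bring the second subderivative $\d^2\dd_\Th(f(\ox),\olm)(\nabla f(\ox)w)$ into play via a judicious choice of approximating sequence. Taking $z_k:=(f(x_k)-f(\ox))/t_k\to\nabla f(\ox)w$, the inclusion $f(\ox)+t_k z_k=f(x_k)\in\Th$ forces the indicator term to vanish, so $\Delta_{t_k}^2\dd_\Th(f(\ox),\olm)(z_k)=-2\langle\olm,z_k\rangle/t_k$. Expanding $\langle\olm,z_k\rangle$ by Taylor and adding $\langle\nabla_{xx}^2L(\ox,\olm)w_k,w_k\rangle$, the surviving terms rearrange via the KKT identity into $2[\ph(x_k)-\ph(\ox)]/t_k^2+o(1)<1/k+o(1)$. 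Passing to the limit inferior, applying $\d^2\dd_\Th(f(\ox),\olm)(\nabla f(\ox)w)\le\liminf_k\Delta_{t_k}^2\dd_\Th(f(\ox),\olm)(z_k)$ from the definition of the second subderivative, and using continuity of the bilinear form $\nabla_{xx}^2L(\ox,\olm)$ then yield
\begin{equation*}
\langle\nabla_{xx}^2L(\ox,\olm)w,w\rangle+\d^2\dd_\Th\big(f(\ox),\olm\big)\big(\nabla f(\ox)w\big)\le 0,
\end{equation*}
which together with $w\ne 0$ and $\nabla f(\ox)w\in K_\Th(f(\ox),\olm)$ contradicts \eqref{2gro}.

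I expect the main obstacle to be the algebraic-asymptotic bookkeeping in the third step: one must keep careful track of the $o(t_k^2)$ remainders produced by the mere twice-differentiability of $\ph$ and $f$ (as opposed to ${\cal C}^2$-smoothness), divide by $t_k^2$, and verify that the only surviving sign-determining contribution collapses exactly to $2[\ph(x_k)-\ph(\ox)]/t_k^2$. The parabolic derivability hypothesis is used only implicitly, through Theorem~\ref{pri}(i) applied to the convex set $\Th$ (for which $\olm\in N_\Th^p(f(\ox))$): it guarantees that $\d^2\dd_\Th(f(\ox),\olm)$ is a proper function whose domain equals $K_\Th(f(\ox),\olm)$, so the sufficient condition \eqref{2gro} is meaningful and not vacuously unsatisfiable on the critical cone.
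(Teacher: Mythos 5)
Your proof is correct, and it takes a genuinely different route from the paper's. The paper argues through the abstract machinery: it first notes $0\in\sub\psi(\ox)$ for $\psi=\ph+\dd_\O$, then establishes the lower estimate $\d^2\dd_\O(\ox,\ov)(w)\ge\la\olm,\nabla^2f(\ox)(w,w)\ra+\d^2\dd_\Th(f(\ox),\olm)(\nabla f(\ox)w)$ (a single-multiplier analogue of \eqref{dine2}, valid without any constraint qualification), handles directions with $\nabla f(\ox)w\notin K_\Th(f(\ox),\olm)$ by showing $\d^2\dd_\Th(f(\ox),\olm)(\nabla f(\ox)w)=\infty$ via Theorem~\ref{pri}(i), and finally invokes the general unconstrained characterization of quadratic growth from \cite[Theorem~13.24(c)]{rw}. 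Your contradiction argument effectively unrolls both of these citations into one elementary computation: the extraction of $x_k$, $t_k$, $w_k$ is the standard proof of the Rockafellar--Wets growth criterion, and your choice $z_k=(f(x_k)-f(\ox))/t_k$ together with the KKT identity reproduces exactly the mechanism behind the lower estimate \eqref{dine2}. A pleasant byproduct of your route is that the off-critical-cone case never arises--you prove directly that the limiting direction satisfies $\nabla f(\ox)w\in K_\Th(f(\ox),\olm)$ by squeezing $\la\olm,\nabla f(\ox)w\ra$ between the two Taylor inequalities--so the parabolic derivability hypothesis is not actually needed in your argument (it enters the paper's proof only through Theorem~\ref{pri}(i), and even there only the inclusion $\dom\d^2\dd_\Th(f(\ox),\olm)\subset K_\Th(f(\ox),\olm)$ is used, which already follows from convexity of $\Th$). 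What the paper's approach buys is reusability: the intermediate lower estimate for $\d^2\dd_\O(\ox,\ov)$ and the link to $\d^2\psi(\ox,0)$ are themselves used elsewhere. What yours buys is a self-contained, first-principles proof whose only nontrivial input is the definition \eqref{ssd} of the second subderivative as a liminf, which you correctly exploit through the one-sided bound $\d^2\dd_\Th(f(\ox),\olm)(\nabla f(\ox)w)\le\liminf_k\Delta_{t_k}^2\dd_\Th(f(\ox),\olm)(z_k)$.
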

\begin{proof} It follows from \eqref{kkt}, the structure of \eqref{coop2}, and the convexity of $\Th$ that
\begin{equation*}
0\in\nabla\ph(\ox)+\nabla f(\ox)^*\olm\subset\nabla\ph(\ox)+\Hat N_\O(\ox)\subset\nabla\ph(\ox)+N_\O(\ox).
\end{equation*}
Using similar arguments as those for the proof of \eqref{dine2} leads us to
\begin{equation*}
\d^2\dd_\O(\ox,\ov)(w)\ge\langle\olm,\nabla^2f(\bar x)(w,w)\rangle+\d^2\dd_\Th\big(f(\bar x),\olm\big)\big(\nabla f(\ox)w\big)
\end{equation*}
for all $w\in\R^n$, where $\ov=-\nabla\ph(\ox)$. Hence for any $w\in\R^n$ we have
\begin{equation*}
\d^2\psi(\ox,0)(w)=\la\nabla^2\ph(\ox)w,w\ra+\d^2\dd_\O(\ox,\ov)(w)\ge\la\nabla_{xx}^2L(\bar x,\olm)w,w\ra+\d^2\dd_\Th\big(f(\bar x),\olm\big)\big(\nabla f(\ox)w\big),
\end{equation*}
where $\psi=\ph+\dd_\O$. If $\nabla f(\ox)w\in K_\Th(f(\ox),\olm)$ for some $w\in\R^n\setminus\{0\}$, then it follows from the above inequality and the assumed second-order condition \eqref{2gro} that $\d^2\psi(\ox,0)(w)>0$. If $\nabla f(\ox)w\notin K_\Th(f(\ox),\olm)$ for some $w\ne 0$, we deduce from Theorem~\ref{pri}(i) that $\d^2\dd_\Th(f(\bar x),\olm)(\nabla f(\ox)w)=\infty$. Using again the above inequality yields $\d^2\psi(\ox,0)(w)=\infty$. Hence for any $w\in\R^n\setminus\{0\}$ we get $\d^2\psi(\ox,0)(w)>0$. Appealing finally to \cite[Theorem~13.24(c)]{rw} verifies the quadratic growth condition \eqref{gro} and thus completes the proof of the proposition.
\end{proof}

\section{Augmented Lagrangians under Parabolic Regularity}\sce\label{sect06a}

In this section we present one of the most striking novel applications of the developed second-order variational theory under parabolic regularity. This  concerns augmented Lagrangians associated with the class of constrained optimization problems \eqref{coop}. The importance of augmented Lagrangians has been well recognized from the viewpoints of both theoretical and algorithmic developments in variational analysis and optimization, and the quadratic growth condition achieved below under parabolic regularity has been a goal of many previous efforts in particular settings; see Remark~\ref{ag-diss} for more comments.

To reach our goal, we need to involve additional tools of second-order variational analysis complemented to those discussed above.  Recall that a function $\ph\colon\R^n\to\oR$ is {\em twice semidifferentiable} at $\ox$ if it is semidifferentiable at $\ox$, defined as in \eqref{semid}, and the limit
\begin{equation*}
\lim_{\substack{t\dn 0\\
u\to w}}\Delta^2_t\ph(\ox)(u)\;\mbox{ with }\;\Delta^2_t\ph(\ox)(u):=\frac{\ph(\ox+tu)-\ph(\ox)-t\d\ph(\ox)(u)}{\sm t^2}
\end{equation*}
exists. The {\em second semiderivative} of $\ph$ at $\ox$ is denoted by $\d^2\ph(\ox)$. It is not hard to check the the existence of the above limit amounts to
saying that $\ph$ satisfies the second-order expansion
\begin{equation*}
\ph(x)=\ph(\ox)+\d\ph(\ox)(x-\ox)+\sm\d^2\ph(\ox)(x-\ox)+o(\|x-\ox\|^2),
\end{equation*}
and that $\d^2\ph(\ox)(w)$ is finite everywhere while depending continuously on $w$. As discussed in \cite[p.\ 590]{rw}, although twice semidifferentiability seems appealing due to its tie to the second-order expansion, it has limitations to handle nonsmoothness. Indeed, even first-order semidifferentiability of $\ph$ at $\ox$ may not hold unless the function is finite and continuous around $\ox$. This makes it impossible to deal with the boundary points of function domains. Nevertheless, there are optimization settings where twice semidifferentiablity is achievable. As shown below, the augmented Lagrangian associated with \eqref{coop} under parabolic regularity enjoys this property.\vspace*{0.05in}

We begin with a simple sum rule for twice semidifferentiability.

\begin{Proposition}[\bf sum rule for twice semidifferentiability]\label{sumr} Let the functions $\ph_i\colon\R^n\to\oR$ as $i=1,2$ be twice semidifferentiable at $\ox$. Then their sum $\ph_1+\ph_2$ is twice semidifferentiable at $\ox$, and we have the equality
\begin{equation*}
\d^2\big(\ph_1+\ph_2\big)(\ox)=\d^2\ph_1(\ox)+\d^2\ph_2(\ox).
\end{equation*}
\end{Proposition}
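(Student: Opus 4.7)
The plan is to reduce everything to the semidifferentiability hypotheses of the two summands by exploiting the fact that, for a semidifferentiable function, the liminf in the definition of the subderivative is actually attained as a true limit, and hence passes through finite sums.

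First I would establish the first-order sum rule under semidifferentiability, namely that $\ph_1+\ph_2$ is semidifferentiable at $\ox$ with
\begin{equation*}
\d(\ph_1+\ph_2)(\ox)(w)=\d\ph_1(\ox)(w)+\d\ph_2(\ox)(w)\;\mbox{ for all }\;w\in\R^n.
\end{equation*}
This is immediate from definition \eqref{semid}: given $t_k\dn 0$ and $u_k\to w$, the quotients
$\bigl(\ph_i(\ox+t_k u_k)-\ph_i(\ox)\bigr)/t_k$ converge to $\d\ph_i(\ox)(w)$ for each $i=1,2$ by the assumed semidifferentiability, so their sum converges to $\d\ph_1(\ox)(w)+\d\ph_2(\ox)(w)$. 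In particular the values $\d\ph_i(\ox)(w)$ are finite for every $w$ (otherwise the limit could not exist as a real number), so the above sum is well-defined.

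Next, using this first-order identity, for any $t>0$ and $u\in\R^n$ the second-order difference quotients split additively as
\begin{equation*}
\Delta^2_t(\ph_1+\ph_2)(\ox)(u)=\Delta^2_t\ph_1(\ox)(u)+\Delta^2_t\ph_2(\ox)(u),
\end{equation*}
simply by the definition $\Delta^2_t\ph(\ox)(u)=\bigl(\ph(\ox+tu)-\ph(\ox)-t\d\ph(\ox)(u)\bigr)/(\sm t^2)$ applied to each summand, combined with the first-order sum rule just verified.

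Finally, picking arbitrary sequences $t_k\dn 0$ and $u_k\to w$, I would pass to the limit in the displayed identity. The twice semidifferentiability of $\ph_i$ at $\ox$ guarantees that
$\Delta^2_{t_k}\ph_i(\ox)(u_k)\to\d^2\ph_i(\ox)(w)$ as $k\to\infty$ for $i=1,2$, with $\d^2\ph_i(\ox)(w)\in\R$. Adding these two convergent sequences yields
\begin{equation*}
\Delta^2_{t_k}(\ph_1+\ph_2)(\ox)(u_k)\longrightarrow\d^2\ph_1(\ox)(w)+\d^2\ph_2(\ox)(w),
\end{equation*}
which shows both that $\ph_1+\ph_2$ is twice semidifferentiable at $\ox$ and that the claimed identity for second semiderivatives holds. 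There is no real obstacle to this argument; the only point to be careful about is that the first-order subderivative sum rule is used in its full semidifferentiability form (true limit, not just liminf), which is precisely what our hypotheses supply.
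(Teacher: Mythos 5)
Your proof is correct and follows essentially the same route as the paper's: establish the first-order subderivative sum rule from semidifferentiability (so the difference quotients converge as true limits with finite values), observe that the second-order difference quotients then split additively, and pass to the limit along arbitrary $t_k\dn 0$, $u_k\to w$. The extra care you take about finiteness of the first- and second-order limits is implicit in the paper's argument and is correctly handled here.
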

\begin{proof} It follows directly from the twice semidifferentiability of $\ph_1$ and $\ph_2$ at $\ox$ that the sum $\ph_1+\ph_2$ is also twice semidifferentiable at this point with
\begin{equation*}
\d\big(\ph_1+\ph_2\big)(\ox)(u)=\d\ph_1(\ox)(u)+\d\ph_2(\ox)(u)\;\mbox{ whenever }\;u\in\R^n.
\end{equation*}
This immediately implies that for any $u\in\R^n$ we have
\begin{equation*}
\Delta^2_t\big(\ph_1+\ph_2\big)(\ox)(u)=\Delta^2_t\ph_1(\ox)(u)+\Delta^2_t\ph_2(\ox)(u).
\end{equation*}
Passing now to the limit as $u\to w$ verifies the twice semidifferentiability of $\ph_1+\ph_2$ at $\ox$.
\end{proof}

Next we establish a chain rule for twice semidifferentiability that is particularly useful for calculating the second subderivative of
the augmented Lagrangian associated with \eqref{coop}.

\begin{Proposition}[\bf chain rule for twice semidifferentiability]\label{tsdch} Consider the composition $\ph=\vt\circ f$, where $f\colon\R^n\to\R^m$ is  twice differentiable at $\ox$, and where $\vt\colon\R^m\to\oR$ is  differentiable at $f(\ox)$ and twice semidifferentiable at this point.
Then the following assertions hold:

{\bf(i)} $\ph$ is  twice semidifferentiable at $\ox$, and its second semiderivative is calculated by
\begin{equation*}
\d^2\ph(\ox)(w)=\big\la\nabla\vt\big(f(\ox)\big),\nabla^2f(\ox)(w,w)\big\ra+\d^2\vt\big(f(\ox)\big)\big(\nabla f(\ox)w\big)\;\mbox{ for all }\;w\in\R^n.
\end{equation*}

{\bf(ii)} $\ph$ has the second-order expansion
\begin{equation}\label{secgof1}
\ph(x)=\ph(\ox)+\la\nabla\ph(\ox),x-\ox\ra+\sm\d^2\ph(\ox)(x-\ox)+o(\|x-\ox\|^2).
\end{equation}

{\bf(iii)} $\ph$ is twice epi-differentiable at $\ox$ for $\nabla \ph(\ox)$ with
\begin{equation*}
\d^2\ph\big(\ox,\nabla\ph(\ox)\big)=\d^2\ph(\ox).
\end{equation*}
\end{Proposition}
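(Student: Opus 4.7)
The plan is to prove (i) first by combining a second-order Taylor expansion of the inner map $f$ with the twice-semidifferentiability of the outer function $\vt$, then observe that (ii) is an immediate consequence of (i), and finally deduce (iii) by checking that the second subderivative reduces to the second semiderivative in this smooth-enough setting.

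For part (i), I would start from the definition $\Delta^2_t\ph(\ox)(u) = [\ph(\ox+tu) - \ph(\ox) - t\la\nabla\ph(\ox),u\ra]/(\tfrac{1}{2}t^2)$, where we use that $\d\ph(\ox)(u) = \la\nabla\vt(f(\ox)),\nabla f(\ox)u\ra = \la\nabla\ph(\ox),u\ra$ via the standard chain rule for differentiability. Since $f$ is twice differentiable at $\ox$, I would write
\begin{equation*}
f(\ox+tu) - f(\ox) = t\,\tilde u_t\quad\text{with}\quad \tilde u_t := \nabla f(\ox)u + \tfrac{1}{2}t\,\nabla^2 f(\ox)(u,u) + \tfrac{1}{t}r(t,u),
\end{equation*}
where $r(t,u) = o(t^2)$ uniformly for $u$ in a bounded set (and hence along any $u\to w$). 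Note that $\tilde u_t \to \nabla f(\ox)w$ as $t\dn 0$ and $u\to w$. Then applying the twice-semidifferentiability of $\vt$ at $f(\ox)$ to the pair $(t,\tilde u_t)$ gives
\begin{equation*}
\vt(f(\ox)+t\tilde u_t) - \vt(f(\ox)) - t\la\nabla\vt(f(\ox)),\tilde u_t\ra = \tfrac{1}{2}t^2\bigl[\d^2\vt(f(\ox))(\nabla f(\ox)w) + \varepsilon(t,u)\bigr],
\end{equation*}
with $\varepsilon(t,u)\to 0$. Expanding $t\la\nabla\vt(f(\ox)),\tilde u_t\ra$ using the definition of $\tilde u_t$ produces a first-order term $t\la\nabla\ph(\ox),u\ra$, a second-order term $\tfrac{1}{2}t^2\la\nabla\vt(f(\ox)),\nabla^2 f(\ox)(u,u)\ra$, and a remainder $\la\nabla\vt(f(\ox)),r(t,u)\ra = o(t^2)$. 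Substituting into $\Delta^2_t\ph(\ox)(u)$ and passing to the limit yields the claimed formula for $\d^2\ph(\ox)(w)$, with continuity in $w$ inherited from the outer second semiderivative.

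Part (ii) is then essentially free: the existence of a continuous second semiderivative as described in the paragraph preceding Proposition~\ref{sumr} is equivalent to the second-order Taylor expansion \eqref{secgof1}, so once (i) is established there is nothing more to do. For part (iii), observe that by differentiability $\d\ph(\ox)(u) = \la\nabla\ph(\ox),u\ra$, so the second subderivative $\d^2\ph(\ox,\nabla\ph(\ox))(w)$ coincides with the lower limit of $\Delta^2_t\ph(\ox)(u)$ as $t\dn 0$, $u\to w$; but part (i) says this lower limit is in fact a full limit equal to $\d^2\ph(\ox)(w)$. To upgrade this to twice epi-differentiability via the characterization \eqref{twice-epi}, given any sequence $t_k\dn 0$ I simply take $w_k := w$ for all $k$; then $\Delta^2_{t_k}\ph(\ox,\nabla\ph(\ox))(w_k) \to \d^2\ph(\ox)(w) = \d^2\ph(\ox,\nabla\ph(\ox))(w)$ by (i).

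The only real obstacle is bookkeeping of the remainder terms in the first step: I must track that $r(t,u) = o(t^2)$ \emph{uniformly} in $u$ over bounded sets (so that $u\to w$ does no harm), and that the twice-semidifferentiability of $\vt$ is flexible enough to accept an argument $\tilde u_t$ that itself depends on $(t,u)$ but converges to $\nabla f(\ox)w$. Both follow readily — the first from the classical characterization of pointwise twice differentiability of $f$ via a uniform-in-compact-sets Taylor remainder, and the second directly from the defining joint limit $(t,u)\to(0,w)$ in twice semidifferentiability — but they are what make the computation go through.
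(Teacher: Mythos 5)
Your proof is correct and follows essentially the same route as the paper: Taylor-expand $f$ at $\ox$, feed the resulting perturbed direction (your $\tilde u_t$, the paper's $(y_t-\oy)/t$) into the second-order behavior of $\vt$ at $f(\ox)$, use the continuity and degree-2 homogeneity of $\d^2\vt(f(\ox))$ to pass to the joint limit, and then read off (ii) from the equivalence between twice semidifferentiability and the second-order expansion and (iii) from the fact that a full limit subsumes the liminf and the constant-sequence choice in \eqref{twice-epi}. No gaps.
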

\begin{proof} Since $\ph$ is differentiable at $\ox$, it is semidifferentiable at this point. Pick any $w\in\R^n$ and let $u\to w$ and $t\dn 0$. It follows from the twice differentiability of $f$ at $\ox$ that
\begin{equation*}
f(\ox+t u)=f(\ox)+t\nabla f(\ox)u+\sm t^2\nabla^2f(\ox)(u,u)+o(t^2).
\end{equation*}
Denote $y_t:=f(\ox+t u)$ and $\oy:=f(\ox)$. It is not hard to derive from the twice semidifferentiability of $\vt$ at $f(\ox)$ that the second-order expansion
\begin{equation*}
\vt\big(y_t\big)=\vt(\oy)+\la\nabla\vt(\oy),y_t-\oy\ra+\sm\d^2\vt(\oy)(y_t-\oy)+o(t^2)
\end{equation*}
holds for all $t>0$ sufficiently small. Thus we get the chain of equalities
\begin{eqnarray*}
\disp\ph(\ox+t u)-\ph(\ox)-t\la\nabla\ph(\ox),u\ra &=&\vt(y_t)-\vt(\oy)-t\la\nabla\vt(\oy),\nabla f(\ox)u\ra\\
&=&\la\nabla\vt(\oy),y_t-y_0\ra+\sm\d^2\vt(\oy)(y_t-\oy)\\
&&-t\la\nabla\vt(\oy),\nabla f(\ox)u\ra+o(t^2)\\
&=&\big\la\nabla\vt(\oy),t\nabla f(\ox)u+\sm t^2\nabla^2f(\ox)(u,u)\big\ra\\
&&+\sm\d^2\vt(\oy)\big(t\nabla f(\ox)u+\sm t^2\nabla^2f(\ox)(u,u)+o(t^2)\big)\\
&&-t\big\la\nabla\vt(\oy),\nabla f(\ox)u\big\ra+o(t^2)\\
&=&\sm t^2\big\la\nabla\vt(\oy),\nabla^2f(\ox)(u,u)\big\ra+o(t^2)\\
&&+\sm\d^2\vt(\oy)\big(t\nabla f(\ox)u+\sm t^2\nabla^2f(\ox)(u,u)+o(t^2)\big).
\end{eqnarray*}
Remembering that $\d^2\vt(\oy)$ is a continuous function, dividing the above equalities by $\sm t^2$, and then letting $u\to w$ and $t\dn 0$ verify the twice semidifferentiability of $\ph$ at $\ox$ and justify the second-order expansion \eqref{secgof1}. Furthermore, this yields the twice epi-differentiable of $\ph$ at $\ox$ for $\nabla\ph(\ox)$. Finally, we observe that the claimed second-order expansion in (ii) comes from \cite[Exercise~13.7(c)]{rw}.
\end{proof}

Although the twice semidifferentiability assumption on the outer function $\vt$ in Proposition~\ref{tsdch} seems to be restrictive, it holds in some important
settings that appear in numerical algorithms for constrained optimization problems. As Rockafellar demonstrated in \cite[Theorem~4.3]{r20}, a convex function $\vt\colon\R^m\to\oR$ is twice semidifferentiable at $\oy\in\dom\nabla\vt$ if and only if it is twice epi-differentiable at $\oy$ for $\nabla\vt(\ox)$ and $\d^2\vt(\oy,\nabla\vt(\oy))(w)$ is finite for any $w\in\R^m$. Now we utilize this result for the augmented Lagrangians of \eqref{coop}. Given $(x,\lm,\rho)\in\R^n\times\R^m\times(0,\infty)$, the {\em augmented Lagrangian} associated with the constrained problem \eqref{coop} is defined by
\begin{equation}\label{aug}
{\cal L}(x,\lm,\rho):=\ph(x)+\frac{\rho}{2}\big[{\rm dist}\big(f(x)+\rho^{-1}\lm;\Th\big)^2-\|\rho^{-1}\lm\|^2\big].
\end{equation}
Given $\psi\colon\R^n\to\oR$ and $r>0$, define the {\em Moreau envelope} of $\psi$ relative to $r$ by
\begin{equation}\label{moreau}
e_r\psi(x)=\inf_w\Big\{\psi(w)+\frac{1}{2r}\|w-x\|^2\Big\},\quad x\in\R^n.
\end{equation}
When $\psi=\dd_\O$ for some $\O\subset\R^n$, we get $e_r\dd_\O(x)=\frac{1}{2r}{\rm dist}(x;\O)^2$. It is well known that
the Moreau envelope $e_r\dd_\O$ associated with a closed and convex set $\O$ is ${\cal C}^1$-smooth on $\R^n$ and its gradient is calculated by
\begin{equation*}
\nabla\big(e_r\dd_\O\big)(x)=\frac{1}{r}\big(x-P_\O(x)\big)
\end{equation*}
via the projection operator $P_\O$ for the set $\O$. Using the Moreau envelope \eqref{moreau} of $\psi$ relative to $r=\rho^{-1}$, we can equivalently express the corresponding augmented Lagrangian \eqref{aug} by
\begin{equation*}
{\cal L}(x,\lm,\rho)=\ph(x)+\big(e_{1/\rho}\dd_\O\big)\big(f(x)+\rho^{-1}\lm\big)-\frac{\rho}{2}\|\rho^{-1}\lm\|^2.
\end{equation*}

Taking now a pair $(\ox,\bar\lm)$ satisfying the KKT first-order necessary optimality condition \eqref{kkt} and remembering that the set $\Th$ in \eqref{coop} is closed and convex, we can easily check that
\begin{equation*}
\nabla\big(e_{1/\rho}\dd_\Th\big)\big(f(\ox)+\rho^{-1}\olm\big)=\olm.
\end{equation*}
Thus for any $\rho>0$ the augmented Lagrangian \eqref{aug} is differentiable at $(\ox,\olm,\rho)$, and we have
\begin{equation}\label{kkt2}
\nabla_x{\cal L}(\ox,\olm,\rho)=\nabla\ph(\ox)+\nabla f(\ox)^*\nabla\big(e_{1/\rho}\dd_\Th\big)\big(f(\ox)+\rho^{-1}\olm\big)=\nabla_xL(\ox,\olm)=0.
\end{equation}

The next theorem establishes twice semidifferentiability and twice epi-differentiability of the augmented Lagrangian associated with the constrained problem \eqref{coop} under parabolic regularity and derives precise formulas for computing its second semiderivative and second subderivative together with verifying the second-order expansion.

\begin{Theorem}[\bf second semiderivatives and subderivatives of augmented Lagrangians]\label{AL} Let $(\ox,\olm)$ satisfy the first-order optimality condition \eqref{kkt} for problem \eqref{coop}. Assume that $\Th$ is parabolic derivable at $f(\ox)$ for every vector from $K_\Th(f(\ox),\olm)$ and that $\Th$ is parabolically regular at $f(\ox)$ for $\olm$. For each $\rho>0$ consider the function
\begin{equation}\label{psi}
x\mapsto{\cal L}(x,\olm,\rho)\;\mbox{ for all }\;x\in\R^n
\end{equation}
defined via the augmented Lagrangian \eqref{aug}. Then the following hold:

{\bf(i)} Function \eqref{psi} is twice semidifferentiable at $\ox$, and for any $w\in\R^n$ we have
\begin{equation*}
\d_x^2{\cal L}(\ox,\olm,\rho)(w)=\big\la\nabla_{xx}^2L(\ox,\olm)w,w\big\ra+e_{1/2\rho}\big(\d^2\dd_\Th(f(\ox),\olm)\big)\big(\nabla f(\ox)w\big),
\end{equation*}
where $\d_x^2{\cal L}(\ox,\olm,\rho)$ is the second semiderivative of the augmented Lagrangian with respect to $x$.

{\bf(ii)} Function \eqref{psi} satisfies the second-order expansion
\begin{equation*}
{\cal L}(x,\olm,\rho)=\ph(\ox)+\sm\d_x^2{\cal L}(\ox,\olm,\rho)(x-\ox)+o(\|x-\ox\|^2).
\end{equation*}

{\bf(iii)} Function \eqref{psi} is twice epi-differentiable at $\ox$ for $0$, and we have the equality
\begin{equation*}
\d_x^2{\cal L}\big((\ox,\olm,\rho),0\big)=\d_x^2{\cal L}(\ox,\olm,\rho)
\end{equation*}
telling us that the second subderivative of the augmented Lagrangian with respect of $x$ at $(\ox,\olm,\rho)$ for $\ov=0$ agrees with its second semiderivative of \eqref{aug} with respect of $x$ at this triple.
\end{Theorem}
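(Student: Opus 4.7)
The plan is to rewrite the augmented Lagrangian \eqref{aug} in terms of the Moreau envelope and then decompose it as a sum
$$
{\cal L}(x,\olm,\rho)=\ph(x)+\bigl(e_{1/\rho}\dd_\Th\bigr)\bigl(f(x)+\rho^{-1}\olm\bigr)-\frac{1}{2\rho}\|\olm\|^{2},
$$
so that the constant term is irrelevant for the second-order analysis at $\ox$. The function $\ph$ is twice semidifferentiable at $\ox$ because it is twice differentiable there, with $\d^{2}\ph(\ox)(w)=\langle\nabla^{2}\ph(\ox)w,w\rangle$. By the sum rule of Proposition~\ref{sumr}, the main task reduces to establishing twice semidifferentiability at $\ox$ of the composition $x\mapsto(e_{1/\rho}\dd_\Th)(f(x)+\rho^{-1}\olm)$ and computing its second semiderivative.

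For this composition I would invoke the chain rule of Proposition~\ref{tsdch} with outer function $\vt:=e_{1/\rho}\dd_\Th$ and the twice differentiable inner mapping $x\mapsto f(x)+\rho^{-1}\olm$. The base point $\bar u:=f(\ox)+\rho^{-1}\olm$ satisfies $P_\Th(\bar u)=f(\ox)$ thanks to $\olm\in N_\Th(f(\ox))$, which yields the smoothness identity $\nabla\vt(\bar u)=\rho(\bar u-P_\Th(\bar u))=\olm$; so the differentiability of $\vt$ at $\bar u$ required by Proposition~\ref{tsdch} is automatic. The delicate hypothesis is the twice semidifferentiability of $\vt$ at $\bar u$. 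Here I would use Rockafellar's criterion \cite[Theorem~4.3]{r20} already cited in the excerpt: a convex function is twice semidifferentiable at a point of differentiability precisely when it is twice epi-differentiable there with a second subderivative that is finite everywhere.

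The twice epi-differentiability of $\vt=e_{1/\rho}\dd_\Th$ at $\bar u$ for the gradient $\olm$ is obtained by combining Corollary~\ref{tedi} applied to $\Th$ itself (viewed trivially as a constraint system with $f=\mathrm{id}$ so that MSCQ holds with $\kappa=1$), which under the standing parabolic-regularity and parabolic-derivability hypotheses yields the proper twice epi-differentiability of $\dd_\Th$ at $f(\ox)$ for $\olm$, with the Moreau-envelope second-order identity
$$
\d^{2}\bigl(e_{1/\rho}\dd_\Th\bigr)(\bar u,\olm)(u)=e_{1/(2\rho)}\bigl(\d^{2}\dd_\Th(f(\ox),\olm)\bigr)(u),\qquad u\in\R^{m}.
$$
The right-hand side is everywhere finite because the Moreau envelope of any proper l.s.c.\ convex function (and $\d^{2}\dd_\Th(f(\ox),\olm)$ is such by Theorem~\ref{pri}(i) and the convexity inherited through Proposition~\ref{ssp}(iii)) is real-valued on the whole space. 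This supplies the finiteness condition in Rockafellar's criterion, so $\vt$ is indeed twice semidifferentiable at $\bar u$, and its second semiderivative coincides with the displayed right-hand side.

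With these pieces in hand, Proposition~\ref{tsdch}(i) yields
$$
\d^{2}\bigl(\vt\circ(f+\rho^{-1}\olm)\bigr)(\ox)(w)=\langle\olm,\nabla^{2}f(\ox)(w,w)\rangle+e_{1/(2\rho)}\bigl(\d^{2}\dd_\Th(f(\ox),\olm)\bigr)\bigl(\nabla f(\ox)w\bigr),
$$
and adding $\langle\nabla^{2}\ph(\ox)w,w\rangle$ via Proposition~\ref{sumr} produces $\langle\nabla_{xx}^{2}L(\ox,\olm)w,w\rangle$ by definition of the Lagrangian, which establishes (i). Part (ii) is then immediate from Proposition~\ref{tsdch}(ii) together with $\nabla_{x}{\cal L}(\ox,\olm,\rho)=0$ from \eqref{kkt2}, which kills the first-order term in the expansion. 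Part (iii) follows from Proposition~\ref{tsdch}(iii) and the same vanishing-gradient observation. The main obstacle I anticipate is the justification of the Moreau-envelope second-order identity; establishing it rigorously requires invoking (or adapting) the classical Poliquin-Rockafellar transfer of twice epi-differentiability through proximal regularization, together with careful tracking of the parameter $\rho$ to explain the appearance of $1/(2\rho)$ (rather than $1/\rho$) on the right-hand side, traceable to the factor $\sm t^{2}$ in the second-order difference quotient that defines the second subderivative.
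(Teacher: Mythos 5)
Your proposal follows essentially the same route as the paper: the same Moreau-envelope rewriting of $\cal L$, the sum rule of Proposition~\ref{sumr}, the chain rule of Proposition~\ref{tsdch} with outer function $e_{1/\rho}\dd_\Th$, Rockafellar's criterion from \cite[Theorem~4.3]{r20} for upgrading twice epi-differentiability plus finiteness to twice semidifferentiability, and the Poliquin--Rockafellar transfer of twice epi-differentiability through the Moreau envelope (the paper cites \cite[Theorem~6.5]{pr96} and \cite[Proposition~4.1]{hms2} for exactly the identity $\d^2(e_{1/\rho}\dd_\Th)(\bar u,\olm)=e_{1/2\rho}(\d^2\dd_\Th(f(\ox),\olm))$ that you flag as the remaining obstacle, noting that convexity of $\dd_\Th$ makes the prox-regularity constant zero so no largeness of $\rho$ is needed). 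The only cosmetic difference is that you reach the twice epi-differentiability of $\dd_\Th$ via Corollary~\ref{tedi} with the trivial representation $f=\mathrm{id}$, whereas the paper invokes Theorem~\ref{pri92} directly.
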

\begin{proof} Fix $\rho>0$ and define the function $\vt$ by $\vt(y):=(e_{1/\rho}\dd_\Th)(y)$ for all $y\in\R^m $. Hence the augmented Lagrangian \eqref{aug}
can be expressed as
\begin{equation*}
{\cal L}(x,\olm,\rho)=\ph(x)+\vt\big(f(x)+\rho^{-1}\olm\big)-\frac{\rho}{2}\|\rho^{-1}\olm\|^2.
\end{equation*}
We further proceed with the following claim.\\[1ex]
{\bf Claim}. {\em For any $\rho>0$ the function $e_{1/\rho}\dd_\Th$ is twice semidifferentiable at $f(\ox)+\rho^{-1}\olm$.}\vspace*{0.05in}

To justify this claim, we conclude from Theorem~\ref{pri92} that $\dd_\Th$ is properly twice epi-differentiable at $f(\ox)$ for $\olm$. Appealing now to \cite[Theorem~6.5]{pr96} (see also \cite[Proposition~4.1]{hms2}) implies that the Moreau envelope $e_{1/\rho}\dd_\Th$ is twice epi-differentiable at $f(\ox)+\rho^{-1}\olm$ for $\nabla  (e_{1/\rho}\dd_\Th)(f(\ox)+\rho^{-1}\olm)=\olm$. Let us observe here that since we employ \cite[Theorem~6.5]{pr96} for the convex function $\dd_\Th$, the constant $r$ in \cite[Theorem~6.5]{pr96} is $0$, and so it is not required to assume in our setting that $\rho$ is sufficiently large. Using \cite[Proposition~4.1]{hms2} tells us that
\begin{equation}\label{more}
\d^2\big(e_{1/\rho}\dd_\Th\big)\big(f(\ox)+\rho^{-1}\olm,\olm)=e_{1/2\rho}\big(\d^2\dd_\Th(f(\ox),\olm)\big).
\end{equation}
Remember that $\dd_\Th$ is a proper, l.s.c., and convex. This implies by \cite[Theorem~2.26(b)]{rw} that $e_{1/\rho}\dd_\Th$ is convex and ${\cal C}^1$-smooth.
It follows from \eqref{more} that the second subderivative $\d^2(e_{1/\rho}\dd_\Th)(f(\ox)+\rho^{-1}\olm,\olm)$ is finite on $\R^m$. Using further \cite[Theorem~4.3]{r20} ensures that $e_{1/\rho}\dd_\Th$ is twice semidifferentiable at $f(\ox)+\rho^{-1}\olm$ with the second subderivative
\begin{equation*}
\d^2\big(e_{1/\rho}\dd_\Th\big)\big(f(\ox)+\rho^{-1}\olm)=\d^2\big(e_{1/\rho}\dd_\Th\big)\big(f(\ox)+\rho^{-1}\olm,\olm\big).
\end{equation*}
The latter means that the second semiderivative of $e_{1/\rho}\dd_\Th$ at $f(\ox)+\rho^{-1}\olm$ agrees with the second subderivative of $e_{1/\rho}\dd_\Th$ at this point for $\olm$, which verifies the claim.

Combining the established claim with Proposition~\ref{tsdch} tells us that the function $x\mapsto\vt(f(x)+\rho^{-1}\olm)$ is twice semidifferentiable at $\ox$. Using further the sum rule from Proposition~\ref{sumr} ensures that the function $x\mapsto\ph(x)+\vt(f(x)+\rho^{-1}\olm)$ is twice semidifferentiable at $\ox$, and hence the augmented Lagrangian $x\mapsto{\cal L}(x,\olm,\rho)$ shares this property. Moreover, it follows from Proposition~\ref{sumr} and Proposition~\ref{tsdch}(i) that
\begin{eqnarray*}
\d_x^2{\cal L}(\ox,\olm,\rho)(w)&=&\d^2\ph(\ox)(w)+\big\la\olm,\nabla^2f(\ox)(w,w)\big\ra+\d^2\big(e_{1/\rho}\dd_\Th\big)\big(f(\ox)+\rho^{-1}\olm\big)\big(\nabla f(\ox)w\big)\\
&=&\la\nabla^2\ph(\ox)w,w\ra+\big\la\olm,\nabla^2f(\ox)(w,w)\big\ra+\d^2\big(e_{1/\rho}\dd_\Th\big)\big(f(\ox)+\rho^{-1}\olm,\olm\big)\big(\nabla f(\ox)w\big)\\
&=&\big\la\nabla_{xx}^2L(\ox,\olm)w,w\big\ra+e_{1/2\rho}\big(\d^2\dd_\Th(f(\ox),\olm)\big)\big(\nabla f(\ox)w\big),
\end{eqnarray*}
where the last equality comes from \eqref{more}. This shows therefore that (i) holds.

Assertions (ii) follows directly from Proposition~\ref{tsdch}(ii) combined with the facts that ${\cal L}(\ox,\olm,\rho)=\ph(\ox)$ and $\nabla_x{\cal L}(\ox,\olm,\rho)=0$ as shown in \eqref{kkt2}. To verify finally (iii), note by \eqref{kkt2} that $\nabla_x{\cal L}(\ox,\olm,\rho)=0$. Thus the twice epi-differentiability of the function $x\mapsto{\cal L}(x,\olm,\rho)$ at $\ox$ for $\ov=0$  is a consequence of the above discussion and Proposition~\ref{tsdch}(iii).
\end{proof}

The twice semidifferentiability of the augmented Lagrangian \eqref{aug} in Theorem~\ref{AL}(i) was discussed in \cite[equation~(3.25)]{ss} under the name of ``second-order Hadamard directional differentiability'' by using a different approach in the case where the set $\Th$ is second-order regular. Recall that the second-order regularity is strictly stronger than the parabolic regularity extensively developed in this paper. The second-order expansion in Theorem~\ref{AL}(ii) was derived for nonlinear programming problems in \cite[Proposition~7.2]{r93} by employing yet another approach.\vspace*{0.05in}

The next major result establishes the validity of the quadratic growth condition \eqref{gro} for the augmented Lagrangian \eqref{aug} associated with \eqref{coop} under the parabolic regularity of $\Th$. Moreover, we prove the {\em equivalence}--again under the parabolic regularity--of the latter growth condition to the second-order sufficient optimality condition \eqref{2gro} for \eqref{coop} as well as to the positivity of the second subderivative of \eqref{aug} with respect to $x$.

\begin{Theorem}[\bf quadratic growth condition for augmented Lagrangians]\label{augl} Let the pair $(\ox,\olm)$ satisfy the first-order optimality condition \eqref{kkt} under the assumptions that:

$\bullet$ $\Th$ is parabolically derivable at $f(\ox)$ for every vector from $K_\Th(f(\ox),\olm)$.

$\bullet$ $\Th$ is parabolically regular at $f(\ox)$ for $\olm$.

$\bullet$ The second subderivative $\d^2\dd_\Th(f(\ox),\olm)$ is continuous relative to its domain which is $K_\Th(f(\ox),\olm)$.\vspace*{0.05in}
Then the following assertions are equivalent:

{\bf(i)} The second-order sufficient condition \eqref{2gro} holds for all vectors $w\in\R^n\setminus\{0\}$ satisfying $\nabla f(\ox)w\in K_\Th(f(\ox),\olm)$.

{\bf(ii)} There exists a positive number $\bar\rho>0$ such that for any $\rho>\bar\rho$ we have
\begin{equation*}
\d_x^2{\cal L}\big((\ox,\olm,\rho),0\big)(w)>0\;\mbox{ whenever }\;w\in\R^n\setminus\{0\}.
\end{equation*}

{\bf(iii)} There are $\bar\rho>0$, $\ve>0$, and $\ell>0$ {\rm(}all dependent on $\olm${\rm)} such that for any $\rho\ge \bar \rho$ we have
\begin{equation}\label{aug2}
{\cal L}(x,\olm,\rho)\ge\ph(\ox)+\frac{\ell}{2}\,\|x-\ox\|^2\;\mbox{ whenever }\;x\in\B_\ve(\ox).
\end{equation}
\end{Theorem}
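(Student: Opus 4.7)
The starting point is Theorem~\ref{AL}: for every $\rho>0$ the map $x\mapsto{\cal L}(x,\olm,\rho)$ is twice semidifferentiable at $\ox$, its second semiderivative coincides with $\d_x^2{\cal L}((\ox,\olm,\rho),0)$, and this function equals
\begin{equation*}
w\mapsto q(w)+e_{1/(2\rho)}\zeta\big(\nabla f(\ox)w\big),\quad q(w):=\la\nabla_{xx}^2L(\ox,\olm)w,w\ra,\quad\zeta:=\d^2\dd_\Th\big(f(\ox),\olm\big).
\end{equation*}
By Proposition~\ref{ssp} and Theorem~\ref{pri}(i), $\zeta$ is proper, l.s.c., positively homogeneous of degree $2$, bounded below by a negative multiple of $\|\cdot\|^2$, and $\dom\zeta=K_\Th(f(\ox),\olm)$. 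Since $\Th$ is convex, $\dd_\Th$ is convex and so is $\zeta$; consequently each Moreau envelope $e_r\zeta$ is convex, finite-valued for all sufficiently small $r>0$, and increases pointwise to $\zeta$ as $r\dn 0$.

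\textbf{Equivalence of (ii) and (iii).} For (iii)$\Ra$(ii), plug $x=\ox+tu$ into \eqref{aug2} and, after dividing by $t^2/2$, send $t\dn 0$ and $u\to w$ to obtain $\d_x^2{\cal L}((\ox,\olm,\rho),0)(w)\ge\ell\|w\|^2>0$ for any $w\neq 0$. Conversely, given (ii) at some $\rho\ge\bar\rho$, the second semiderivative $w\mapsto\d_x^2{\cal L}(\ox,\olm,\rho)(w)$ is continuous and positively homogeneous of degree $2$, so compactness of the unit sphere yields a uniform lower bound $\d_x^2{\cal L}(\ox,\olm,\rho)(w)\ge\ell\|w\|^2$ with $\ell>0$; combining this bound with the second-order expansion of Theorem~\ref{AL}(ii) delivers \eqref{aug2} on a sufficiently small ball around $\ox$.

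\textbf{(ii)$\Ra$(i).} Since $e_r\zeta\le\zeta$ pointwise, for every $w\neq 0$ with $\nabla f(\ox)w\in K_\Th(f(\ox),\olm)$ and every $\rho\ge\bar\rho$ one has
\begin{equation*}
q(w)+\zeta\big(\nabla f(\ox)w\big)\ge q(w)+e_{1/(2\rho)}\zeta\big(\nabla f(\ox)w\big)=\d_x^2{\cal L}\big((\ox,\olm,\rho),0\big)(w)>0,
\end{equation*}
which is precisely the sufficient condition \eqref{2gro}.

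\textbf{(i)$\Ra$(ii): the main obstacle.} Argue by contradiction. Failure of (ii) provides sequences $\rho_k\to\infty$ and, by positive $2$-homogeneity, unit vectors $w_k$ with $q(w_k)+e_{1/(2\rho_k)}\zeta(\nabla f(\ox)w_k)\le 0$. Extract a subsequence so that $w_k\to\bar w$ with $\|\bar w\|=1$ and $y_k:=\nabla f(\ox)w_k\to\bar y:=\nabla f(\ox)\bar w$. Using that $e_r\zeta$ is decreasing in $r$ and converges pointwise to $\zeta$ as $r\dn 0$, together with continuity of each $e_r\zeta$, one derives the key epi-convergence estimate
\begin{equation*}
\liminf_{k\to\infty}e_{1/(2\rho_k)}\zeta(y_k)\ge\zeta(\bar y).
\end{equation*}
If $\bar y\notin\dom\zeta=K_\Th(f(\ox),\olm)$, the right-hand side equals $+\infty$, contradicting $q(w_k)+e_{1/(2\rho_k)}\zeta(y_k)\le 0$ with the continuous function $q$ bounded on the unit sphere. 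Otherwise $\bar y\in K_\Th(f(\ox),\olm)$, and passing to the limit yields $q(\bar w)+\zeta(\bar y)\le 0$, in direct contradiction with the sufficient condition~(i) at the unit vector $\bar w$. The delicate step is the $\liminf$ inequality above; the continuity of $\zeta$ on its domain together with the lower quadratic bound are precisely what keep the Moreau envelopes finite and make the limit passage legitimate.
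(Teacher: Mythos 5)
Your proposal is correct in substance, and for the hardest implication (i)$\Ra$(ii) it takes a genuinely different route from the paper. The paper works on the unit sphere $S$ and splits it into $S\cap V$ (a neighborhood of the critical directions $S\cap E$) and its complement: on $S\cap E$ it upgrades the pointwise monotone convergence $\chi_\rho\uparrow\chi$ to uniform convergence via Dini's theorem -- which is exactly where the third standing assumption (continuity of $\d^2\dd_\Th(f(\ox),\olm)$ relative to $K_\Th(f(\ox),\olm)$) enters -- and on $S\cap V^c$ it bounds $\chi_\rho$ below by $\al+\rho\beta$ using $\d^2\dd_\Th(f(\ox),\olm)\ge\dd_{K_\Th(f(\ox),\olm)}$. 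Your contradiction argument replaces all of this by the single estimate $\liminf_k e_{r_k}\zeta(y_k)\ge\zeta(\bar y)$ for $r_k\dn 0$, $y_k\to\bar y$, which is legitimate: fix $r>0$, note $e_{r_k}\zeta\ge e_r\zeta$ for large $k$, use continuity of the finite convex function $e_r\zeta$, and then let $r\dn 0$ via the pointwise convergence $e_r\zeta\uparrow\zeta$. Since this uses only the lower semicontinuity of $\zeta=\d^2\dd_\Th(f(\ox),\olm)$ (automatic from Proposition~\ref{ssp}(i)) together with its convexity and nonnegativity, your argument in fact dispenses with the paper's third bullet assumption entirely -- a real gain in generality, and it also unifies the two cases $\bar y\in K_\Th(f(\ox),\olm)$ and $\bar y\notin K_\Th(f(\ox),\olm)$ that the paper treats in separate steps.

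The one genuine (though easily repaired) gap is in (ii)$\Ra$(iii). Your argument produces, for each \emph{fixed} $\rho>\bar\rho$, constants $\ve_\rho$ and $\ell_\rho$ with the growth estimate \eqref{aug2}, but assertion (iii) demands a \emph{single} pair $(\ve,\ell)$ valid for all $\rho\ge\bar\rho$ simultaneously; a priori $\ve_\rho$ or $\ell_\rho$ could degenerate as $\rho\to\infty$. The paper closes this by invoking the monotonicity of $\rho\mapsto{\cal L}(x,\olm,\rho)$ (from \cite[Exercise~11.56]{rw}): once the growth holds at one value $\rho_0$, the inequality ${\cal L}(x,\olm,\rho)\ge{\cal L}(x,\olm,\rho_0)$ propagates it with the same $(\ve,\ell)$ to every $\rho\ge\rho_0$. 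You should add this one line; everything else stands.
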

\begin{proof} Assume first that (ii) holds for $\rho=\bar\rho$. Employing \cite[Theorem~13.24(c)]{rw} tells us that there exist positive numbers $\ve$ and $\ell$ such that
\begin{equation*}
{\cal L}(x,\olm,\bar\rho)\ge f(\ox)+\frac{\ell}{2}\,\|x-\ox\|^2\;\mbox{ for all }\;x\in\B_{\ve}(\ox)
\end{equation*}
with the usage of the equality ${\cal L}(\ox,\olm,\bar\rho)=f(\ox)$. Since the function $\rho\mapsto{\cal L}(x,\olm,\rho)$ is nondecreasing due to \cite[Exercise~11.56]{rw}, we get (iii) for any $\rho\ge\bar\rho$. Then implication (iii)$\implies$(ii) comes from the definition of the second subderivative.

Assume now that (ii) holds and fix the numbers $\bar\rho,\rho$ therein. Theorem~\ref{AL} and the Moreau envelope construction \eqref{moreau} ensure that
\begin{eqnarray*}
\big\la\nabla_{xx}^2L(\bar x,\olm)w,w\big\ra+\d^2\dd_\Th\big(f(\ox),\olm\big)\big(\nabla f(\ox)w)&\ge&\la\nabla_{xx}^2L(\ox,\olm)w,w\ra+e_{1/2\rho}\big(\d^2\dd_\Th(f(\ox),\olm)\big)\big(\nabla f(\ox)w\big)\\
&=&\d_x^2{\cal L}(\ox,\olm,\rho)(w)=\d_x^2{\cal L}\big((\ox,\olm,\rho),0\big)(w)>0
\end{eqnarray*}
for all $w\in\R^n\setminus\{0\}$, which in turn justifies (i).

To verify the opposite implication, assume that (i) holds and define the sets
\begin{equation*}
S:=\big\{w\in\R^n\big|\;\|w\|=1\big\}\;\mbox{ and }\;E:=\big\{w\in\R^n\big|\;\nabla f(\ox)w\in K_\Th\big(f(\ox),\olm\big)\big\}.
\end{equation*}
Since $\Th$ is parabolically derivable at $f(\ox)$ for every vector in $K_\Th(f(\ox),\olm)$, we deduce from Theorem~\ref{pri}(i) that the function $\d^2\dd_\Th(f(\ox),\olm)$ is proper and lower semicontinuous. This ensures that the second-order condition \eqref{2gro} amounts to the existence of $\ell>0$ such that
\begin{equation}\label{soscp2}
\big\la\nabla_{xx}^2L(\bar x,\olm)w,w\big\ra+\d^2\dd_\Th\big(f(\ox),\olm\big)\big(\nabla f(\ox)w\big)\ge\ell\|w\|^2\;\mbox{ for all }\; w\in E.
\end{equation}
Consider further the function $\chi_{\rho}\colon\R^n\to\oR$ given by
\begin{equation*}
\chi_{\rho}(w):=\d_x^2{\cal L}\big((\ox,\olm,\rho),0\big)(w)=\big\la\nabla_{xx}^2L(\bar x,\olm)w,w\big\ra+e_{1/2\rho}\big(\d^2\dd_\Th(f(\ox),\olm)\big)\big(\nabla f(\ox)w\big),\;\;w\in \R^n.
\end{equation*}
It follows from the convexity of $\Th$, the parabolic regularity of $\dd_\Th$ at $f(\ox)$ for $\olm$, and Theorem~\ref{pri92} that $\dd_\Th$ is properly twice epi-differentiable at $f(\ox)$ for $\olm$. Appealing now to \cite[Proposition~13.20(a)]{rw} indicates that $\d^2\dd_\Th(f(\ox),\olm)$ is a convex function. Hence we deduce from \cite[Theorem~2.26(b)]{rw} that the function $\chi_{\rho}$ is finite and continuous on $\R^n$ for any $\rho>0$.

Next we show that $\chi_{\rho}(w)>0$ for all $w\in S$ whenever $\rho>0$ is sufficiently large. This is accomplished by the following two steps.\\[1ex]
{\bf Step~1:} {\em There are an open set $V\subset\R^n$ and a number $\bar\rho_1>0$ with $S\cap E\subset S\cap V$ and
\begin{equation*}
\chi_{\rho}(w)>0\;\mbox{ for all }\;w\in S\cap V\;\mbox{ and all }\;\rho>\bar\rho_1.
\end{equation*}}
To verify this, consider the function
\begin{equation*}
\chi(w):=\big\la\nabla_{xx}^2L(\bar x,\olm)w,w\big\ra+\d^2\dd_\Th\big(f(\ox),\olm\big)\big(\nabla f(\ox)w\big),\quad w\in S.
\end{equation*}
Since $E$ is closed and $S$ is compact, the set $S\cap E$ is obviously compact as well. It follows from our assumptions in this theorem that $\chi$ is continuous relative to the compact set $S\cap E$, which allows us to deduce from \cite[Theorem~1.25]{rw} that
\begin{equation*}
\chi_{\rho}(w)\uparrow\chi(w)\;\mbox{ as }\;\rho\to\infty\;\mbox{ for all }\;w\in S\cap E.
\end{equation*}
Employing the Dini theorem from \cite[Theorem~7.13]{ru} ensures that the above pointwise convergence becomes uniform on $S\cap E$. Thus for any $\ve>0$ we find  $\bar{\rho}_1>0$ such that
\begin{equation*}
|\chi_{\bar{\rho}_1}(w)-\chi(w)|<\ve\;\mbox{ whenever }\;w\in S\cap E.
\end{equation*}
In particular, for $\ve:=\ell/2$ with $\ell>0$ taken from \eqref{soscp2} it follows that
\begin{equation}\label{hpos}
\chi_{\bar{\rho}_1}(w)>\frac{\ell}{2}\;\mbox{ whenever }\;w\in S\cap E.
\end{equation}
Now we claim that there exists an open set $V\subset\R^n$ such that $S\cap E\subset S\cap V$ and
\begin{equation*}
\chi_{\bar{\rho}_1}(w)>\frac{\ell}{4}\;\mbox{ for all }\;w\in S\cap V.
\end{equation*}
To justify it, pick $w\in S\cap E$ and remember that $\chi_{\bar{\rho}_1}$ is continuous at $w$ with $\chi_{\bar{\rho}_1}(w)>\ell/2$ due to \eqref{hpos}.
This gives us a neighborhood $U_w$ of $w$ in $\R^n$ for which
\begin{equation*}
\chi_{\bar{\rho}_1}(u)>\frac{\ell}{4}\;\mbox{ whenever }\;u\in U_w.
\end{equation*}
Setting $V:=\bigcup_{w\in S\cap E}U_{w}$, which is open in $\R^n$ but depends in $\bar\rho_1$, we see that $S\cap E\subset S\cap V$ and that $\chi_{\bar{\rho}_1}(w)>\frac{\ell}{4}$ for all $w\in S\cap V$, and hence our claim is verified. Pick now $\rho>\bar\rho_1$ and deduce from the monotonicity of the functions $\chi_\rho$ with respect to $\rho$ that
\begin{equation*}
\chi_{\rho}(w)\ge\chi_{\bar{\rho}_1}(w)>\frac{\ell}{4}>0\;\mbox{ for all }\;w\in S\cap V,
\end{equation*}
which therefore completes the proof of Step~1.\\[1ex]
{\bf Step~2:} {\em There exists a number $\bar\rho_2>0$ such that
\begin{equation*}
\chi_{\rho}(w)>0\;\mbox{ for all }\;w\in S\cap V^c\;\mbox{ and all}\;\rho>\bar\rho_2,
\end{equation*}
where $V$ is taken from Step~{\rm 1}, and where $V^c$ stands for the complement of $V$ in $\R^n$.}\\[1ex]
To prove this statement, note first that $V^c$ is a closed set and so $S\cap V^c$ is compact. It is not hard to check the implication
\begin{equation}\label{kdcf}
w\in S\cap V^c\implies\nabla f(\ox)w\notin K_\Th\big(f(\ox),\olm\big).
\end{equation}
Defining further the real quantities
\begin{equation*}
\al:=\min_{w\in S\cap V^c}\big\la\nabla_{xx}^2L(\bar x,\olm)w,w\big\ra\;\mbox{ and }\;\beta:=\min_{w\in S\cap V^c}\dist\big(\nabla f(\ox)w;K_\Th(f(\ox),\olm)\big)^2,
\end{equation*}
we observe from \eqref{kdcf} and the compactness of $S\cap V^c$ that $\beta>0$.  It follows from Theorem~\ref{pri}(i) that $\d^2\dd_\Th(f(\ox),\olm)(w)\ge 0$ for all $w\in\R^n$; since $\Th$ is convex, the constant $r$ in Theorem~\ref{pri}(i) is zero. Furthermore, the aforementioned theorem tells us that $\dom\d^2\dd_\Th(f(\ox),\olm)=K_\Th(f(\ox),\olm)$. Combining these results readily yields
\begin{equation*}
\d^2\dd_\Th\big(f(\ox),\olm\big)(w)\ge\dd_{K_\Th(f(\ox),\olm)}(w)\;\mbox{ for all }\;w\in\R^n.
\end{equation*}
Let $\bar\rho_2:=\max\{1,-\frac{\al}{\beta}\}$. Then whenever $\rho>\bar{\rho}_2 $ and $w\in S\cap V^c$ we get
\begin{eqnarray*}
\chi_{\rho}(w)&=&\big\la\nabla_{xx}^2L(\bar x,\olm)w,w\big\ra+e_{1/2\rho}\big(\d^2\dd_\Th(f(\ox),\olm)\big)\big(\nabla f(\ox)w\big)\\
&\ge&\big\la\nabla_{xx}^2L(\bar x,\olm)w,w\big\ra+e_{1/2\rho}\big(\dd_{K_\Th(f(\ox),\olm)}\big)\big(\nabla f(\ox)w\big)\\
&=&\big\la\nabla_{xx}^2L(\bar x,\olm)w,w\big\ra+{\rho}\,{\rm dist}\big(\nabla f(\ox)w;K_\Th(f(\ox),\olm)\big)^2\\
&\ge&\al+{\rho}\beta>0,
\end{eqnarray*}
which therefore verifies the statement of Step~2.\vspace*{0.05in}

Unifying the results established in Step~1 and Step~2 brings us to the inequality
\begin{equation*}
\d_x^2{\cal L}\big((\ox,\olm,\rho),0\big)(w)=\chi_{\rho}(w)>0\;\mbox{ for all }\;w\in S\;\mbox{ and all }\;\rho>\bar\rho:=\max\big\{\bar\rho_1,\bar\rho_2\big\}.
\end{equation*}
Taking finally any $w\in\R^n\setminus\{0\}$ and recalling that the second subderivative is positive homogeneous of degree $2$, we obtain for all $\rho>\bar\rho$ that
\begin{equation*}
\d_x^2{\cal L}\big((\ox,\olm,\rho),0\big)(w)=\|w\|^2\d_x^2{\cal L}\big((\ox,\olm,\rho),0\big)\Big(\frac{w}{\|w\|}\Big)>0,
\end{equation*}
which justifies (ii) and thus completes the proof of the theorem.
\end{proof}

Let us conclude this section with brief discussions on previous efforts to obtain the quadratic growth condition for augmented Lagrangians and the main assumptions of Theorem~\ref{augl}.

\begin{Remark}[\bf on quadratic growth for augmented Lagrangians]\label{ag-diss} {\rm Observe that:

{\bf(i)} There have been some developments in order to establish implication (i)$\implies$(iii) in Theorem~\ref{augl} for different classes of constrained optimization problems. Rockafellar  in \cite[Theorem~7.4]{r93} derived this implication for nonlinear programming problems without appealing to the second subderivative, the main player in our proof. Liua and Zhang obtained this result for second-order cone programming problems in \cite[Proposition~10]{lz} when in addiction the strict complementarity condition and some nondegeneracy condition were imposed. For semidefinite programming problems a similar result was achieved in \cite[Proposition~4]{ssz} by assuming a stronger version of the second-order sufficient condition \eqref{2gro} together with a nondegeneracy condition. Theorem~\ref{augl} provides an extension of Rockafellar's result for {\em any parabolically regular} constrained optimization problem including second-order cone programs, semidefinite programs, etc. Moreover, we also show that the quadratic growth condition \eqref{aug2} for the augmented Lagrangians is actually {\em equivalent} to the second-order sufficient condition \eqref{2gro}. The latter was not observed before even in nonlinear programming.

{\bf(ii)} Finally, we briefly discuss the main assumptions in Theorem~\ref{augl}. As mentioned earlier, the parabolic derivability and parabolic regularity hold for any convex polyhedral set (Example~\ref{poly}), for the second-order cone (Example~\ref{ice}), and---more generally---for any ${\cal C}^2$-cone reducible set (Theorem~\ref{twi20}). The continuity of the second subderivative relative to its domain is satisfied for any ${\cal C}^2$-cone reducible sets according to \eqref{tepired}. It is not clear at this stage for us whether or not such an assumption holds for any parabolically regular set in general. What we do know from Theorem~\ref{pri}(i) is that the second subderivative is always lower semicontinuous.}
\end{Remark}

\section{Subgradient Graphical Derivatives via Parabolic Regularity}\sce\label{sect07}

The section is devoted to precise calculating the graphical derivatives of the normal mappings generated the constraint systems. In other words, we intend to derive exact formulas for computing the subgradient graphical derivative of the indicator function for the set $\O$ from \eqref{CS}. Theorem~\ref{proto}(ii) gives us a road map to reach this goal. Indeed, by \eqref{gdpd} we should try to find the subdifferential of the second subderivative of $\dd_\O$ calculated in Theorem~\ref{epi}. This can be achieved by appealing to a recent result of \cite[Theorem~3]{chl}, which provides an advanced subdifferential formula for functions represented as the supremum of infinitely many convex ones. The key here is the last formula established in Theorem~\ref{epi} for the second subderivative that only requires to take the maximum over a compact subset of the collection of Lagrange multipliers \eqref{lagn}. As we see below, parabolic regularity and its properties established in the previous sections play a crucial role in our approach.\vspace*{0.05in}

We begin with the following result showing that if the mapping $f$ in \eqref{CS} is of class ${\cal C}^2$, then $\O$ from \eqref{CS} is prox-regular. This allows us to use \eqref{gdpd} for calculating the graphical derivative of the normal cone mapping $N_\O$. We omit the proof of this result that follows the lines of \cite[Corollary~2.12]{pr96} where it was done under metric regularity ensuring the boundedness of the Lagrange multiplier set $\Lambda(\ox,\ov)$. Although it is not the case under the imposed MSCQ \eqref{mscq}, we can proceed as the proof of \cite[Proposition~7.1]{mms} to alleviate the hardship.

\begin{Proposition}[\bf prox-regularity of constraint systems]\label{sm} Let in addition to the basic assumptions {\rm(H1)--(H4)} the mapping $f$ from \eqref{CS} be ${\cal C}^2$-smooth around $\ox$.
Then the set $\O$ in \eqref{CS} is prox-regular at $\ox$ for any normal vector $\ov\in N_\O(\ox)$.
\end{Proposition}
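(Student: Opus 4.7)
The goal is to establish, for some $\ve>0$ and $r>0$, the defining prox-regularity inequality
\begin{equation*}
\langle v, u - x\rangle \le \tfrac{r}{2}\|u - x\|^2\quad\mbox{for all }(x,v)\in(\gph N_\O)\cap\B_\ve(\ox,\ov),\;u\in\O\cap\B_\ve(\ox).
\end{equation*}
The overall strategy is to mimic the classical Poliquin-Rockafellar scheme from \cite[Corollary~2.12]{pr96}, which uses three ingredients: a normal cone chain rule giving $v=\nabla f(x)^*\mu$, convexity of $\Th$ that turns inclusion $f(u)\in\Th$ into a one-sided linear inequality for $\mu$, and a quadratic remainder estimate from $\mathcal{C}^2$-smoothness of $f$. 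The twist is that metric regularity is replaced by MSCQ, so the Lagrange multiplier set may be unbounded and a bounded selection of multipliers has to be constructed by hand, following \cite[Proposition~7.1]{mms}.

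First I would shrink the neighborhood around $\ox$ so that the chain rule in Proposition~\ref{2chin} applies throughout, together with the identification $N_\O(x) = \widehat{N}_\O(x) = \nabla f(x)^*N_\Th(f(x))$ coming from Proposition~\ref{norm}. Then for each $(x,v)$ in $\gph N_\O$ close to $(\ox,\ov)$ there exists some $\mu\in N_\Th(f(x))$ with $v = \nabla f(x)^*\mu$. The main obstacle, and by far the most delicate point in the proof, is to select such a multiplier $\mu = \mu_{x,v}$ whose norm is \emph{uniformly bounded} by a constant $M>0$ as $(x,v)$ varies in a small ball around $(\ox,\ov)$. Under metric regularity this is automatic via the coderivative criterion \eqref{cod-cr}, but under MSCQ the set $\Lambda(x,v)$ can be unbounded. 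To surmount this I would adapt the construction of \cite[Proposition~7.1]{mms}: combine MSCQ with modulus $\kappa$ at $\ox$, the local Lipschitz continuity of $\nabla f$ guaranteed by $\mathcal{C}^2$-smoothness, and a Robinson-type outer Lipschitzian selection on the multiplier mapping $(x,v)\mapsto\{\mu\in N_\Th(f(x))\mid\nabla f(x)^*\mu = v\}$, in the spirit of Theorem~\ref{fors}, to produce $\mu_{x,v}$ with $\|\mu_{x,v}\|\le\kappa\|v\|+o(1)$ as $(x,v)\to(\ox,\ov)$. Since $\|v\|$ stays close to $\|\ov\|$, this yields the desired uniform bound $\|\mu_{x,v}\|\le M$ on a possibly smaller neighborhood.

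Once such a bounded multiplier is in hand, the conclusion follows routinely. The convexity of $\Th$ together with $f(u),f(x)\in\Th$ and $\mu\in N_\Th(f(x))$ gives
\begin{equation*}
\langle\mu, f(u) - f(x)\rangle \le 0,
\end{equation*}
while $\mathcal{C}^2$-smoothness of $f$ around $\ox$ provides a constant $L>0$ such that
\begin{equation*}
\bigl\|f(u) - f(x) - \nabla f(x)(u-x)\bigr\|\le\tfrac{L}{2}\|u - x\|^2
\end{equation*}
on a neighborhood of $\ox$. Chaining these two facts produces
\begin{equation*}
\langle v, u-x\rangle = \langle\mu,\nabla f(x)(u-x)\rangle \le \langle\mu, f(u) - f(x)\rangle + \tfrac{L}{2}\|\mu\|\,\|u-x\|^2 \le \tfrac{ML}{2}\|u - x\|^2,
\end{equation*}
which is precisely \eqref{proxr} with $r:=ML$, completing the argument.
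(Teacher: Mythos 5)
Your proposal is correct and follows exactly the route the paper itself indicates for this result (the paper omits the details, referring to the Poliquin--Rockafellar argument of \cite[Corollary~2.12]{pr96} with the unbounded-multiplier difficulty handled as in \cite[Proposition~7.1]{mms}): chain rule under MSCQ, a uniformly bounded selection $\mu_{x,v}\in\Lambda(x,v)$ with $\|\mu_{x,v}\|\le\kappa\|v\|$ obtained from the outer Lipschitzian/duality argument, and then the convexity of $\Th$ plus the quadratic Taylor remainder of $f$. The only mildly informal point is the ``$+o(1)$'' in the multiplier bound, but the uniform bound $\kappa\|v\|\le\kappa(\|\ov\|+\ve)$ already suffices, so nothing is missing.
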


Using the prox-regularity of $\O$ and the second-order optimality conditions obtained in Theorem~\ref{nsop1}, we derive now a pointwise second-order characterization of the important notion of strong metric subregularity for subdifferential mappings associated with constrained optimization problems \eqref{coop} with parabolically regular sets $\Th$. Recall that a set-valued mapping $F\colon\R^n\tto\R^m$ is {\em strongly metrically subregular} at $(\ox,\oy)\in\gph F$ if there exist a constant $\kappa\in\R_+$ and a neighborhood $U$ of $\ox$ ensuring the distance estimate
\begin{equation*}
\|x-\ox\|\le\kappa\,{\rm dist}\big(\oy;F(x)\big)\;\mbox{ for all }\;x\in U.
\end{equation*}
The Levy-Rockafellar criterion (see \cite[Theorem~4E.1]{dr} and the commentaries therein) tells us that  the mapping $F$ is strongly metrically regular at $(\ox,\oy)$ if and only if we have the implication
\begin{equation}\label{sms8}
0\in DF(\ox,\oy)(w)\implies w=0.
\end{equation}
The next result was first observed in \cite[Theorem~4G.1]{dr} for a special subclass of nonlinear programming problems and then was extended in \cite[Theorem~4.6]{chnt} for ${\cal C}^2$-cone reducible constrained optimization problems. Now we are able to establish it for more general constrained problems \eqref{coop} generated by parabolically regular sets $\Th$.

\begin{Theorem}[\bf strong metric subregularity of subgradient mappings]\label{sms} Let the basic assumptions {\rm(H1)--(H4)} hold for $\O$ from \eqref{coop2}, and let $\ov:=-\nabla\ph(\ox)$.
Assume further that $\Th$ in \eqref{CS} is parabolically regular at $f(\ox)$ for every Lagrange multiplier $\lambda\in\Lambda(\ox,\ov)$, and that both $\ph$ and $f$ are ${\cal C}^2$-smooth around $\ox$.
Then the following assertions are equivalent:

{\bf(i)} The point $\ox$ is a local minimizer of $\psi:=\ph+\dd_\O$, and the subgradient mapping $\sub f$ is strongly metrically subregular at $(\ox,0)$.

{\bf(ii)} The second-order sufficient optimality condition \eqref{sscc} is satisfied.
\end{Theorem}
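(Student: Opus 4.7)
The proof hinges on the Levy--Rockafellar pointwise criterion \eqref{sms8}: strong metric subregularity of $\sub\psi$ at $(\ox,0)$ amounts to $D(\sub\psi)(\ox,0)^{-1}(0)=\{0\}$. My plan is to compute this graphical derivative exactly as the subdifferential of $\tfrac12\d^2\psi(\ox,0)$, and then read off each direction of the equivalence by inserting either Theorem~\ref{nsop1}(ii) (to get SMS from SOSC) or Theorem~\ref{nsop1}(i) together with the generalized Fermat rule (to get SOSC from SMS).

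For the reduction, since $f$ is $\mathcal{C}^2$ around $\ox$, Proposition~\ref{sm} gives prox-regularity of $\Omega$ at $\ox$ for $\ov$. Assumption (H4) together with Theorem~\ref{chain} yields parabolic derivability of $\Omega$ at $\ox$ for every vector in $K_\Omega(\ox,\ov)$, while Theorem~\ref{epi} yields its parabolic regularity at $\ox$ for $\ov$, so all hypotheses of Theorem~\ref{proto} are in force and $DN_\Omega(\ox,\ov)(w)=\sub(\tfrac12\d^2\dd_\Omega(\ox,\ov))(w)$. Since $\sub\psi=\nabla\ph+N_\Omega$ near $\ox$, the set $\gph\sub\psi$ is the image of $\gph N_\Omega$ under the $\mathcal{C}^1$-diffeomorphism $(x,y)\mapsto(x,y+\nabla\ph(x))$, so a direct tangent-cone transformation gives $D(\sub\psi)(\ox,0)(w)=\nabla^2\ph(\ox)w+DN_\Omega(\ox,\ov)(w)$. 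Combining this with the smooth-plus-nonsmooth subdifferential sum rule and the second subderivative sum rule \eqref{ns1} produces the target identity $D(\sub\psi)(\ox,0)(w)=\sub(\tfrac12\d^2\psi(\ox,0))(w)$, and so \eqref{sms8} becomes: $0\in\sub(\tfrac12\d^2\psi(\ox,0))(w)\Rightarrow w=0$.

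For the direction \textbf{(ii)}$\Rightarrow$\textbf{(i)}, Theorem~\ref{nsop1}(ii) delivers the quadratic growth \eqref{gro}, hence local minimality. Using \eqref{epi2}, \eqref{ns1}, and \eqref{critco}, the sufficient condition \eqref{sscc} is equivalent to $\d^2\psi(\ox,0)(w)>0$ for every $w\ne 0$; lower semicontinuity (Proposition~\ref{ssp}(i)) and positive homogeneity of degree 2 (Proposition~\ref{ssp}(ii)) then produce a constant $c>0$ with $\d^2\psi(\ox,0)(w)\ge c\|w\|^2$. Setting $g:=\tfrac12\d^2\psi(\ox,0)$, an Euler-type identity $\la v,w\ra=2g(w)$ for $v\in\hat\sub g(w)$ (obtained from the regular subgradient inequality on rays $t\mapsto tw$ together with positive homogeneity of degree $2$) passes to the limiting subdifferential by a routine closure argument and forces $g(w_0)=0$, hence $w_0=0$, whenever $0\in\sub g(w_0)$. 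For the converse \textbf{(i)}$\Rightarrow$\textbf{(ii)}, the necessary condition of Theorem~\ref{nsop1}(i) combined with \eqref{critco} yields $\d^2\psi(\ox,0)\ge 0$ with $g(0)=0$; any hypothetical $w^*\ne 0$ in $K_\Omega(\ox,\ov)$ with $\d^2\psi(\ox,0)(w^*)=0$ would be a second minimizer of $g$, so the generalized Fermat rule gives $0\in\sub g(w^*)$, contradicting SMS.

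The main obstacle is the bridge from \eqref{gdpd} to the identification $D(\sub\psi)(\ox,0)=\sub(\tfrac12\d^2\psi(\ox,0))$: it requires the $\mathcal{C}^2$-smoothness hypothesis on $f$ (to invoke Proposition~\ref{sm} for prox-regularity, so that Theorem~\ref{proto} is applicable, and to execute clean smooth-plus-nonsmooth sum rules at both the first- and second-order subdifferential levels), and it forces the Euler-type identity for positively-homogeneous-of-degree-$2$ functions to be propagated from regular to limiting subgradients. Once this identification is in hand, the two directions of the equivalence reduce to the already-proved Theorem~\ref{nsop1}.
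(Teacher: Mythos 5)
Your proposal is correct and follows essentially the same route as the paper: both rest on the Levy--Rockafellar criterion, the identification $D(\sub\psi)(\ox,0)=\sub\big(\sm\d^2\psi(\ox,0)\big)$ obtained from prox-regularity (Proposition~\ref{sm}), formula \eqref{gdpd}, and the sum rule \eqref{ns1}, combined with Theorem~\ref{nsop1} and the Fermat rule in each direction. The only cosmetic difference is that you derive the Euler-type identity $\la v,w\ra=2g(w)$ for subgradients of the positively homogeneous function $g=\sm\d^2\psi(\ox,0)$ from scratch, whereas the paper cites it as \cite[Lemma~3.7]{chnt}; your derivation is valid.
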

\begin{proof} It follows from the proof of  Theorem~\ref{nsop1} that the second-order sufficient optimality condition \eqref{sscc} amount to saying that
\begin{equation}\label{vbnm}
\d^2\psi(\ox,0)(w)>0\;\mbox{ for all }\;w\in\R^n\setminus\{0\}.
\end{equation}
Assume first that (i) holds and then deduce from the local minimality of $\ox$ in \eqref{coop2} and Theorem~\ref{nsop1}(i) that $\d^2\psi(\ox,0)(w)\ge 0$ whenever $w\in\R^n$. To verify \eqref{vbnm}, suppose on the contrary that there exists a vector $\ow\ne 0$ such that $\d^2\psi(\ox,0)(\ow)=0$. Consider the optimization problem
\begin{equation*}
\mbox{minimize}\;\sm\d^2\psi(\ox,0)(w)\;\mbox{ subject to }\;w\in\R^n
\end{equation*}
for which $\ow$ is clearly a minimizer. Furthermore, Proposition~\ref{sm} ensures that the set $\O$ is prox-regular at $\ox$ for $\ov$. Using the subdifferential Fermat rule and Corollary~\ref{tedi} together with the equalities in \eqref{gdpd} and \eqref{ns1} gives us the relationships
\begin{eqnarray*}
0\in\sub\big(\sm\d^2\psi(\ox,0)\big)(\ow)&=&\nabla^2\ph(\ox)\ow+\sub\big(\sm\d^2\dd_\O(\ox,\ov)\big)(\ow)\\
&=&\nabla^2\ph(\ox)\ow+DN_\O(\bar x,\ov)(\ow)\\
&=&D\big(\nabla\ph+N_\O\big)(\ox,0)(\ow)=\big(D\sub\psi\big)(\ox,0)(\ow).
\end{eqnarray*}
Since the subgradient mapping $\sub\psi$ is metrically subregular at $(\ox,0)$, we conclude from \eqref{sms8} that $\ow=0$, a contradiction. This justifies \eqref{vbnm}, and thus we arrive at (ii).

Assume now that (ii) holds. Then Theorem~\ref{nsop1}(ii) tells us that $\ox$ is a local minimizer of $\psi$. To prove the strong metric subregularity of $\sub\psi$, let $0\in(D\sub\psi)(\ox,0)(w)$. Criterion \eqref{sms8} reduces our task to checking that $w=0$. As argued above, we have the representation
\begin{equation*}
\big(D\sub\psi\big)(\ox,0)(w)=\sub\big(\sm\d^2\psi(\ox,0)\big)(w),\quad w\in\R^n,
\end{equation*}
which implies $0\in\sub\big(\sm\d^2\psi(\ox,0)\big)(w)$. Using \cite[Lemma~3.7]{chnt} gives us $\d^2\psi(\ox,0)(w)=\la 0,w\ra=0$. Combining this with \eqref{vbnm} yields $w=0$ and thus  completes the proof of the theorem.
\end{proof}

To establish the main result of this section, we first present the following lemma.

\begin{Lemma}[\bf convexity of a family of quadratic functions]\label{convexq} Let $f\colon\R^n\to\R^m$ be twice differentiable at $\ox$, and let $E\subset\R^m$ be a compact set. Given any $\rho\in\R$ and any $\lambda\in\R^m$, consider the quadratic form
\begin{equation*}
\xi_{\lambda}(w):=\big\la\lm,\nabla^2f(\bar x)(w,w)\big\ra+\rho\|w\|^2,\quad w\in\R^n.
\end{equation*}
There exists $\rho>0$ such that $\xi_{\lambda}\colon\R^n\to\R$ is a convex function for each $\lm\in E$.
\end{Lemma}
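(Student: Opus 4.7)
The plan is to reduce the problem to a uniform lower bound on the smallest eigenvalue of a continuously varying family of symmetric matrices, and then invoke compactness of $E$.

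First, I would observe that $\xi_\lambda$ is a pure quadratic form in $w$: writing
\begin{equation*}
\xi_\lambda(w)=\big\la H(\lambda)w,w\big\ra+\rho\|w\|^2,\qquad H(\lambda):=\sum_{i=1}^m\lambda_i\nabla^2f_i(\bar x),
\end{equation*}
the Hessian of $\xi_\lambda$ with respect to $w$ is $2H(\lambda)+2\rho I$. Hence convexity of $\xi_\lambda$ is equivalent to the symmetric matrix $H(\lambda)+\rho I$ being positive semidefinite, i.e., to $\mu_{\min}(H(\lambda))\ge-\rho$, where $\mu_{\min}(\cdot)$ denotes the smallest eigenvalue.

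Next I would use that $\lambda\mapsto H(\lambda)$ is linear (and thus continuous) from $\R^m$ into the space of symmetric $n\times n$ matrices, and that $\mu_{\min}(\cdot)$ is continuous on this space (as the minimum over the unit sphere of the Rayleigh quotient $M\mapsto\la Mu,u\ra$). The composite $\lambda\mapsto\mu_{\min}(H(\lambda))$ is therefore continuous on $\R^m$. Since $E\subset\R^m$ is compact, this continuous function attains its infimum, and we may set
\begin{equation*}
m^*:=\min_{\lambda\in E}\mu_{\min}\big(H(\lambda)\big)\in\R.
\end{equation*}

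Finally, it suffices to pick any $\rho>0$ with $\rho\ge-m^*$, for instance $\rho:=\max\{1,1-m^*\}$. Then for every $\lambda\in E$ we have $\mu_{\min}(H(\lambda))\ge m^*\ge-\rho$, so $H(\lambda)+\rho I\succeq 0$, and consequently $\xi_\lambda$ is convex. There is no genuine obstacle here; the only small thing to be mindful of is that the twice differentiability of $f$ at $\bar x$ is only pointwise, but that is enough because $\nabla^2f(\bar x)$ is a single fixed bilinear mapping, so $H(\lambda)$ depends linearly (and hence continuously) on $\lambda$ without any regularity assumption on $f$ away from $\bar x$.
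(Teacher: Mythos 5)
Your proof is correct and follows essentially the same route as the paper: both arguments come down to choosing $\rho$ larger than $\max\{-\la\lm,\nabla^2f(\bar x)(d,d)\ra\,|\,\lambda\in E,\ \|d\|=1\}$, which is finite by compactness and continuity; the paper phrases this via restriction of $\xi_\lambda$ to lines and the sign of the $t^2$-coefficient, while you phrase it via the smallest eigenvalue of the (symmetrized) Hessian $H(\lambda)+\rho I$, but these are the same bound.
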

\begin{proof}
Observe for any $\lm\in\R^m$ the convexity of $\xi_{\lambda}$ on $\R^n$ amounts to its convexity relative to every line in $\R^n$. Pick any $w\in\R^n$ and $d\in\R^n$ with $\|d\|=1$. Then
\begin{equation*}
\xi_{\lambda}(w+t d)=\big(\big\langle\lm,\nabla^2f(\bar x)(d,d)\big\rangle+\rho\big)t^2+\theta(t),\quad t\in\R,
\end{equation*}
where $\th(t)$ is a polynomial of degree less than $2$. Selecting $\rho>0$ such that
\begin{equation*}
\rho>\max\big\{-\big\langle\lm,\nabla^2f(\bar x)(d,d)\big\rangle\big|\:\lambda\in E,\;\|d\|=1\big\},
\end{equation*}
we can easily check that $\xi_\lm$ is convex on every line in $\R^n$ and thus complete the proof.
\end{proof}

Now we are ready to derive the main result here, which presents precise formulas to calculate the subgradient graphical derivative associated with parabolically regular constraint systems.

\begin{Theorem}[\bf subgradient graphical derivative of constraint systems]\label{gd} In addition to the basic assumptions {\rm(H1)--(H4)}, let $f$ be a ${\cal C}^2$-smooth mapping. Then the normal cone mapping $N_\O$ generated by the constraint system $\O$ from \eqref{CS} is proto-differentiable at $\ox$ for $\ov$ and its graphical derivative is calculated by the formulas
\begin{eqnarray*}\label{gd0}
DN_\O(\bar x,\bar v)(w)&=&\bigcup_{\lm\in\Lambda(\bar x,\bar v,w)\cap(\kappa\,\|\ov\|\B)}\nabla^2\la\lm,f\ra(\bar x)w+\sub_w\Big(\sm\d^2\dd_\Th\big(f(\ox),\lambda\big)\big(\nabla f(\ox)\cdot\big)\Big)(w)\\
&=&\bigcup_{\lm\in\Lambda(\bar x,\bar v,w)}\nabla^2\la\lm,f\ra(\bar x)w+\sub_w\Big(\sm\d^2\dd_\Th(f(\ox),\lambda\big)\big(\nabla f(\ox)\cdot\big)\Big)(w)
\end{eqnarray*}
for all $w\in K_\O(\ox,\ov)$, where $\Lambda(\bar x,\bar v,w)$ stands for the set of optimal solutions to the dual problem \eqref{du2}. Moreover, we have $DN_\O(\bar x,\bar v)(w)=\emp$ if $w\notin K_\O(\ox,\ov)$.
\end{Theorem}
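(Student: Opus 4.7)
The plan is to apply Theorem~\ref{proto}(ii), which under prox-regularity and parabolic regularity of $\O$ identifies $DN_\O(\ox,\ov)(w)$ with $\sub(\sm\d^2\dd_\O(\ox,\ov))(w)$ and simultaneously delivers proto-differentiability of $N_\O$ at $\ox$ for $\ov$. Under (H1)--(H4), the $\mathcal{C}^2$-smoothness of $f$, and the standing parabolic regularity of $\Th$ at $f(\ox)$ for every $\lm\in\Lambda(\ox,\ov)$ carried over from Theorem~\ref{epi}, all hypotheses are at hand: prox-regularity of $\O$ at $\ox$ for $\ov$ is provided by Proposition~\ref{sm}; parabolic derivability at $\ox$ for every $w\in K_\O(\ox,\ov)$ comes from (H4) combined with Theorem~\ref{chain}; and parabolic regularity of $\O$ at $\ox$ for $\ov$ is delivered by Theorem~\ref{epi}. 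This reduces the task to computing the convex subdifferential of $\sm\d^2\dd_\O(\ox,\ov)$ at $w$.

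For the computation I would start from the ball-restricted formula of Theorem~\ref{epi}, valid by Remark~\ref{rem5}(i) for any $r\ge\kappa\|\ov\|$:
\begin{equation*}
\sm\d^2\dd_\O(\ox,\ov)(w)=\max_{\lm\in E}F(\lm,w),\quad F(\lm,w):=\sm\la\nabla^2\la\lm,f\ra(\ox)w,w\ra+\sm\d^2\dd_\Th\big(f(\ox),\lm\big)\big(\nabla f(\ox)w\big),
\end{equation*}
with the compact index set $E:=\Lambda(\ox,\ov)\cap r\B$. As a pointwise supremum over a compact set, this calls for the subdifferential rule for suprema of convex functions \cite[Theorem~3]{chl}. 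The main obstacle is that each quadratic form $w\mapsto\sm\la\nabla^2\la\lm,f\ra(\ox)w,w\ra$ need not be convex. I would remove this by adding $\tfrac{\rho}{2}\|w\|^2$ to both sides, with $\rho$ chosen via Lemma~\ref{convexq} for the compact set $E$: the perturbed quadratic becomes convex for every $\lm\in E$, while $w\mapsto\sm\d^2\dd_\Th(f(\ox),\lm)(\nabla f(\ox)w)$ is already convex by Theorem~\ref{pri92} combined with \cite[Proposition~13.20(a)]{rw}, since $\Th$ is convex and parabolically regular at $f(\ox)$ for $\lm$. Thus $\sm\d^2\dd_\O(\ox,\ov)+\tfrac{\rho}{2}\|\cdot\|^2$ is expressed as the supremum over $E$ of a family of proper, l.s.c., convex functions depending continuously on $\lm$.

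Applying \cite[Theorem~3]{chl} and undoing the perturbation (the extra $\tfrac{\rho}{2}\|\cdot\|^2$ contributes a gradient $\rho w$ that cancels on both sides of the subdifferential sum rule) gives $\sub(\sm\d^2\dd_\O(\ox,\ov))(w)$ as the union of $\sub_wF(\lm,w)$ taken over the active indices in $E$, which by Remark~\ref{rem5}(ii) are precisely $\Lambda(\ox,\ov,w)\cap r\B$. A standard convex subdifferential sum rule then splits the explicit quadratic piece (with gradient $\nabla^2\la\lm,f\ra(\ox)w$) off from the composition subgradient, yielding the first claimed formula. To pass to the second formula over the full $\Lambda(\ox,\ov,w)$, I would observe that for any $\lm\in\Lambda(\ox,\ov,w)$ the function $F(\lm,\cdot)$ is a convex minorant of $\sm\d^2\dd_\O(\ox,\ov)$ agreeing at $w$, so $\sub_wF(\lm,w)\subset\sub(\sm\d^2\dd_\O(\ox,\ov))(w)$; combined with the already established equality for $\lm\in E$, the two unions must coincide.

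Finally, for $w\notin K_\O(\ox,\ov)$, Proposition~\ref{ule}(ii) yields $w\notin\dom\d^2\dd_\O(\ox,\ov)$, and since this function is proper we get $\sub(\sm\d^2\dd_\O(\ox,\ov))(w)=\emp$, whence $DN_\O(\ox,\ov)(w)=\emp$. The most delicate step will be checking the qualification conditions in \cite[Theorem~3]{chl} --- notably the joint continuity of $\lm\mapsto\sm\d^2\dd_\Th(f(\ox),\lm)(\nabla f(\ox)w)$ on the compact multiplier set $E$ and the applicability of the convex subdifferential chain rule for the inner composition under MSCQ --- but these are all consequences of parabolic regularity of $\Th$ and the convexity inherited from Theorem~\ref{pri92}.
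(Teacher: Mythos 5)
Your overall architecture matches the paper's proof step for step: reduction via Proposition~\ref{sm}, Corollary~\ref{tedi} and Theorem~\ref{proto}(ii) to computing $\sub\big(\sm\d^2\dd_\O(\ox,\ov)\big)(w)$; the ball-restricted max formula from Remark~\ref{rem5}(i); convexification of the quadratic terms by adding $\rho\|\cdot\|^2$ with $\rho$ from Lemma~\ref{convexq}; the supremum-rule of Correa--Hantoute--L\'opez; identification of the active indices with $\Lambda(\ox,\ov,w)\cap r\B$ via Remark~\ref{rem5}(ii); and the empty-value case from $\dom\d^2\dd_\O(\ox,\ov)=K_\O(\ox,\ov)$. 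However, there is one genuine gap. The supremum rule you invoke does \emph{not} return the plain union of the subdifferentials of the active functions: it returns the \emph{convex hull} of that union. The formulas you are asked to prove are unions with no convex hull, and the paper devotes roughly a page to showing the hull can be dropped --- given a convex combination $q=\sum_i\tau_iq_i$ with $q_i$ coming from multipliers $\olm_i\in\Lambda(\ox,\ov,w)\cap r\B$, one forms $\olm:=\sum_i\tau_i\olm_i$, uses the concavity of $\lm\mapsto\d^2\dd_\Th(f(\ox),\lm)(\nabla f(\ox)u)$ (Proposition~\ref{ssp}(iii)) together with the fact that all active multipliers give the same optimal value to show that $q$ lies in the single set indexed by $\olm$. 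Your write-up passes directly from the supremum rule to the union, so as it stands the claimed right-hand sides are only proved to be contained in $DN_\O(\ox,\ov)(w)$, not equal to it.

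Two smaller points. First, your passage to the second formula calls $F(\lm,\cdot)$ a ``convex minorant,'' but without the $\rho\|\cdot\|^2$ term it need not be convex; the minorant-touching argument still yields the inclusion $\sub_wF(\lm,\cdot)(w)\subset\sub\big(\sm\d^2\dd_\O(\ox,\ov)\big)(w)$ for the regular subdifferential, though the paper's route (take the union of the already-established identity over all $r\ge\kappa\|\ov\|$) avoids the issue entirely. Second, no convex subdifferential chain rule for the inner composition $\nabla f(\ox)\cdot$ is needed here --- the statement of Theorem~\ref{gd} deliberately keeps the composed subdifferential $\sub_w\big(\sm\d^2\dd_\Th(f(\ox),\lm)(\nabla f(\ox)\cdot)\big)(w)$ unsplit, and that splitting is only carried out later (Theorem~\ref{police}) in the polyhedral and second-order cone cases where it holds without qualification conditions. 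What must be verified for the supremum rule is the concavity and upper semicontinuity of $\lm\mapsto\ph_\lm(w)$ on $\Lambda(\ox,\ov)$ and the compactness of the $\ve$-active multiplier sets, rather than joint continuity.
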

\begin{proof}
Proposition~\ref{sm} tells us that the normal cone mapping $N_\O$ is prox-regular at $\ox$ for $\ov$. Combining it with Theorem~\ref{proto}(ii) and Corollary~\ref{tedi} ensures that the mapping $N_\O$ is proto-differentiable at $\ox$ for $\ov$. By \eqref{gdpd} the graphical derivative of this mapping can be obtained by calculating the subdifferential of the second subderivative $\d^2\dd_\O(\ox,\ov)$. To proceed further, pick a real number $r$ with $r\ge\kappa\|\ov\|$, where $\kappa$ is taken from the MSCQ assumption (H3). As discussed in Remark~\ref{rem5}(i), for any $w\in\R^n$ the second subderivative $\d^2\dd_\O(\ox,\ov)(w)$ can be calculated by \eqref{epi4}. Define now the supremum function $\ph\colon\R^n\to\oR$ by
\begin{equation*}
\ph(w):=\disp\sup_{\lm\in\Lambda(\ox,\ov)\cap\,r\B}\ph_{\lm}(w)\;\mbox{ with }\;\ph_{\lm}(w) :=\big\langle\lm,\nabla^2f(\bar x)(w,w)\big\rangle+\rho\|w\|^2+\d^2\dd_\Th\big(f(\bar x),\lm\big)\big(\nabla f(\ox)w\big),
\end{equation*}
where $\rho>0$ is taken from Lemma~\ref{convexq} with $E:=\Lambda(\ox,\ov)\cap r\B$. We claim the following properties, where the abbreviation u.s.c.\ signifies the upper semicontinuity of a scalar function:\vspace*{0.05in}

{\bf(i)} $\dom\ph_{\lm}=\dom\ph=K_\O(\ox,\ov)$ as $\lm\in\Lambda(\ox,\ov)\cap r\B$, and $\ph_{\lm}$ is a proper convex function.

{\bf(ii)} For each $w\in K_\O(\ox,\ov)$ the function $\lm\mapsto\ph_{\lm}(w)$ is concave and u.s.c.\ on $\Lambda(\ox,\ov)$.

{\bf(iii)} For each $\ve\ge 0$ the subset of multipliers
\begin{equation*}
\Gamma_{\ve}^r(w):=\big\{\lambda\in\Lambda(\ox,\ov)\cap r\B\big|\;\ph_{\lm}(w)\ge\ph(w)-\ve\big\}
\end{equation*}
is compact in $\R^m$ whenever $w\in K_\O(\ox,\ov)$.\vspace*{0.05in}

To verify (i), deduce from Theorem~\ref{pri}(i) and \eqref{krie} that $\dom\ph_{\lm}=K_\O(\ox,\ov)$ for any $\lm\in\Lambda(\ox,\ov)\cap r\B$. Since $\ph(\cdot)=\d^2\dd_\O(\ox,\ov)(\cdot)+\rho\|\cdot\|^2$ by \eqref{epi4}, it follows from \eqref{critco} that $\dom\ph=\dom\d^2\dd_\O(\ox,\ov)=K_\O(\ox,\ov)$. Invoking assumption (H4) and Theorem~\ref{pri92} yields the twice epi-differentiability of $\dd_\Th$ at $f(\ox)$ for every $\lm\in\Lambda(\ox,\ov)$.
This allows us to employ \cite[Proposition~13.20(a)]{rw} to conclude  that the function $\d^2\dd_\Th(f(\bar x),\lm)(\nabla f(\ox)\cdot)$ is proper and convex on $\R^n$ and so are the functions $\ph_\lm$,
which proves (i).

To justify now assertion (ii), note that for each $w\in K_\O(\ox,\ov)$ the function
\begin{equation*}
\lm\mapsto\big\langle\lm,\nabla^2f(\bar x)(w,w)\rangle+\rho\|w\|^2-\sigma_{T^2_\Th(f(\ox),\nabla f(\ox)w)}(\lambda)
\end{equation*}
is clearly concave and upper semicontinuous on the set $\Lambda(\ox,\ov)$. Using  assumption (H4) along with Theorem~\ref{pri}(iii), we get the representation $\d^2\dd_\Th(f(\bar x),\lm)(\nabla f(\ox)w)=-\sigma_{T^2_\Th(f(\ox),\nabla f(\ox)w)}(\lambda)$ whenever $\lm\in\Lambda(\ox,\ov)$, which verifies (ii).

Turning finally to the proof of (iii), pick $\ve\ge 0$ and $w\in K_\O(\ox,\ov)$. Since the function $\lm\mapsto\ph_{\lm}(w)$ is u.s.c.\ on $\Lambda(\ox,\ov)$ by (ii), the set
\begin{equation*}
\big\{\lambda\in\Lambda(\ox,\ov)\big|\;\ph_{\lm}(w)\ge\ph(w)-\ve\big\}
\end{equation*}
is closed, and thus $\Gamma_{\ve}^r(w)$ is compact. This completes the proof of our claims (i)--(iii).\vspace*{0.05in}

Observe further that the established claims ensure that the imposed assumptions in \cite[Theorem~3]{chl} are satisfied in our setting. Thus for any $w\in K_\O(\ox,\ov)$ we have
\begin{eqnarray*}
\sub\ph(w)&=&\co\Big\{\bigcup_{\lambda\in\Gamma^r(w)}\sub\big(\ph_{\lambda}+\dd_{K_\O(\ox,\ov)}\big)(w)\Big\}=\co\Big\{\bigcup_{\lambda\in\Gamma^r(w)}\sub\ph_{\lambda}(w)\Big\}\\
&=&2\,\co\Big\{\bigcup_{\lambda\in\Gamma^r(w)}\nabla^2\la\lm,f\ra(\bar x)w+\sub_w\Big(\sm\d^2\dd_\Th\big(f(\ox),\lambda\big)\big(\nabla f(\ox)\cdot\big)\Big)(w)\Big\}+2\rho w,
\end{eqnarray*}
where $\Gamma^r(w):=\{\lambda\in\Lambda(\ox,\ov)\cap r\B\;|\;\ph_{\lm}(w)=\ph(w)\}$. Since $\Gamma^r(w)$ is the set of all $\lm\in\Lambda(\ox,\ov)\cap r\B$ at which the maximum in \eqref{epi4} is achieved, we get $\Gamma^r(w)=\Lambda(\ox,\ov,w)\cap r\B$ by Remark~\ref{rem5}(ii).

As mentioned earlier in the proof, it follows from \eqref{epi4} that $\ph(\cdot)=\d^2\dd_\O(\ox,\ov)(\cdot)+\rho\|\cdot\|^2$. This together with \eqref{gdpd} brings us to the expressions
\begin{eqnarray*}
D N_\O(\ox,\ov)(w)&=&\sub\big(\sm\d^2\dd_\O(\ox,\ov)\big)(w)=\sm\sub\ph(w)-\rho w\\
&=&\co\Big\{\bigcup_{\lambda\in\Lambda(\ox,\ov,w)\cap r\B}\nabla^2\la\lm,f\ra(\bar x)w+\sub_w\Big(\sm\d^2\dd_\Th\big(f(\ox),\lambda\big)\big(\nabla f(\ox)\cdot\big)\Big)(w)\Big\}.
\end{eqnarray*}
Now we are going to show that the convex hull can be dropped in the latter equality. To this end, pick a vector $q$ from the right-hand side of the this equality and find
$\olm_i\in\Lambda(\ox,\ov,w)\cap r\B$ with $s\in\N$ and $\tau_i\in\R_+$ with $i=1,\ldots,s$ such that $q=\sum_{i=1}^{s}\tau_i q_i$ and $\sum_{i=1}^s\tau_i=1$ with
\begin{equation}\label{dn03}
q_i\in\nabla^2\la\olm_i,f\ra(\bar x)w+\sub_w\Big(\sm\d^2\dd_\Th\big(f(\ox),\olm_i\big)\big(\nabla f(\ox)\cdot\big)\Big)(w).
\end{equation}
Define $\olm:=\sum_{i=1}^{s}\tau_i\olm_i$ and observe that the inclusions $\olm_i\in\Lambda(\ox,\ov,w)\cap r\B$ easily imply that $\olm\in\Lambda(\ox,\ov,w)\cap r\B$. This implies that for any $i=1,\ldots,s$ we have
\begin{equation*}
\big\langle\olm_i,\nabla^2f(\bar x)(w,w)\big\rangle+\d^2\dd_\Th\big(f(\bar x),\olm_i\big)\big(\nabla f(\ox)w\big)=\big\langle\olm,\nabla^2f(\bar x)(w,w)\big\rangle+\d^2\dd_\Th\big(f(\bar x),\olm\big)\big(\nabla f(\ox)w\big),
\end{equation*}
which in turn gives us the representation
\begin{equation}\label{dn02}
\sum_{i=1}^{s}\tau_i\d^2\dd_\Th\big(f(\bar x),\olm_i\big)\big(\nabla f(\ox)w\big)=\d^2\dd_\Th\big(f(\bar x),\olm\big)\big(\nabla f(\ox)w\big).
\end{equation}
Denoting $\th_i(\cdot):=\sm\d^2\dd_\Th(f(\ox),\olm_i)(\nabla f(\ox)\cdot)$ for $i=1,\ldots,s$ and arguing as in the proof of claim (i) above tell us that each function $\th_i$ is convex.
Hence by \eqref{dn03} there is $\tilde q_i\in\sub\th_i(w)$ with $q_i=\nabla^2\la\olm_i,f\ra(\bar x)w+\tilde q_i$. This implies by the subdifferential construction of convex analysis that
\begin{equation*}
\la\tilde q_i,u-w\ra\le\th_i(u)-\th_i(w)\;\mbox{ for all }\;u\in\R^n.
\end{equation*}
Thus for any $u\in\R^n$ we deduce from \eqref{dn02} that
\begin{eqnarray*}
\big\la\sum_{i=1}^{s}\tau_i\tilde q_i,u-w\big\ra&\le&\sum_{i=1}^{s}\tau_i\Big(\th_i(u)-\th_i(w)\Big)\\
&=&\sm\sum_{i=1}^{s}\tau_i\d^2\dd_\Th\big(f(\ox),\olm_i\big)\big(\nabla f(\ox)u\big)-\sm\sum_{i=1}^{s}\tau_i\d^2\dd_\Th\big(f(\ox),\olm_i\big)\big(\nabla f(\ox)w\big)\\
&\le&\sm\d^2\dd_\Th\Big(f(\ox),\sum_{i=1}^{s}\tau_i\olm_i\Big)\big(\nabla f(\ox)u\big)-\sm\d^2\dd_\Th\big(f(\bar x),\olm\big)\big(\nabla f(\ox)w\big)\\
&=&\sm\d^2\dd_\Th\big(f(\ox),\olm\big)\big(\nabla f(\ox)u\big)-\sm\d^2\dd_\Th\big(f(\bar x),\olm\big)\big(\nabla f(\ox)w\big),
\end{eqnarray*}
where the second inequality comes from the fact that the mapping $\lm\mapsto\sm\d^2\dd_\Th(f(\ox),\lm)(\nabla f(\ox)u)$ is concave by Proposition~\ref{ssp}(iii). Hence we arrive at the inclusion
\begin{equation*}
\sum_{i=1}^{s}\tau_i\tilde q_i\in\sub_w\Big(\sm\d^2\dd_\Th\big(f(\ox),\olm\big)\big(\nabla f(\ox)\cdot\big)\Big)(w),
\end{equation*}
which brings us in turn to
\begin{eqnarray*}
q=\sum_{i=1}^{s}\tau_i q_i=\sum_{i=1}^{s}\tau_i\Big(\nabla^2\la\olm_i,f\ra(\bar x)w+\tilde q_i\Big)&=&\nabla^2\la\olm,f\ra(\bar x)w+\sum_{i=1}^{s}\tau_i\tilde q_i\\
&\in&\nabla^2\la\olm,f\ra(\bar x)w+\sub_w\Big(\sm\d^2\dd_\Th\big(f(\ox),\olm\big)\big(\nabla f(\ox)\cdot\big)\Big)(w).
\end{eqnarray*}
This verifies that we can drop the convex hull in the obtained formula for the graphical derivative of $N_\O$. So for every number $r$ with $r\ge\kappa\|\ov\|$ and every vector $w\in K_\O(\ox,\ov)$ we have
\begin{equation*}
DN_\O(\ox,\ov)(w)=\bigcup_{\lambda\in\Lambda(\ox,\ov,w)\cap r\B}\nabla^2\la\lm,f\ra(\bar x)w+\sub_w\Big(\sm\d^2\dd_\Th\big(f(\ox),\lambda\big)\big(\nabla f(\ox)\cdot\big)\Big)(w).
\end{equation*}
Choosing there $r:=\kappa\|\ov\|$ gives us the first formula for the graphical derivative of $N_\O$ claimed in the theorem. Furthermore, taking the union over all the numbers $r$ with $r\ge\kappa\|\ov\|$, we arrive at the second formula for the graphical derivative of $N_\O$ claimed therein. If finally $w\notin K_\O(\ox,\ov)$, which means that $w\notin\dom\d^2\dd_\O(\ox,\ov)$, then it follows from \eqref{gdpd} that $DN_\O(\ox,\ov)(w)=\emp$, and thus we complete the proof of the theorem.
\end{proof}

As shown in the proof of Theorem~\ref{gd}, the parabolic regularity of $\Th$ together with the assumptions in (H2) and (H3) imposed on $\Th$ ensures the convexity of the second subderivative $\d^2\dd_\Th(f(\ox),\lambda)$. This implies that the mapping $w\mapsto\sm\d^2\dd_\Th(f(\ox),\lambda)(\nabla f(\ox)w)$ standing under the convex subdifferential sign in the subgradient graphical derivative formulas of Theorem~\ref{gd} is a composition of a convex function and a linear operator. This calls for using a subdifferential chain rule of convex analysis to further elaborate the representations of $DN_\O$ in Theorem~\ref{gd} entirely via the given data of constraint systems \eqref{CS}.\vspace*{0.05in}

The next theorem provides refined formulas for $DN_\O$ in the cases where $\Th$ in \eqref{CS} is either a polyhedral convex set, or the second-order cone $\Q$ defined by \eqref{Qm}. We select these settings since for them the subdifferential sum rules do not require any qualification condition. While for polyhedral sets it follows from the above developments due to the classical chain rule of convex analysis, the case of $\Q$ is based on quite recent results for second-order cone programming.

\begin{Theorem}[\bf subgradient graphical derivative for polyhedral and second-order cone constraint systems]\label{police} In the framework of Theorem~{\rm\ref{gd}}, suppose that the underlying convex set $\Th$ is either a polyhedral set, or the second-order cone $\Q$ from \eqref{Qm}. Then for any $w\in K_\Th(\ox,\ov)$ the graphical derivative of $N_\O$ at $(\ox,\ov)$ is calculated by
\begin{equation}\label{gd1}
DN_\O(\ox,\ov)(w)=\bigcup_{\lambda\in\Lambda(\ox,\ov,w)}\nabla^2\la\lm,f\ra(\bar x)w+\nabla f(\ox)^*DN_\Th\big(f(\ox),\lambda\big)\big(\nabla f(\ox)w\big).
\end{equation}
Furthermore, in the polyhedral case for $\Th$ the term $DN_\Th$ in \eqref{gd1} is specified by
\begin{equation}\label{gd2}
DN_\Th\big(f(\ox),\lambda\big)(u)=N_{K_\Th(f(\ox),\lm)}(u)\;\mbox{ for all }\;u\in\R^m.
\end{equation}
If $\Th=\Q$, then the graphical derivative of $N_\Q$ at $(f(\ox),\lambda)$ is calculated by
\begin{equation}\label{gd3}
DN_\Q\big(f(\ox),\lambda\big)(u)={\cal H}\big(f(\ox),\lm\big)(u)+N_{K_\Q(f(\ox),\lm)}(u)\;\mbox{ for all }\;u\in\R^m,
\end{equation}
where the first term is given for all $u=(y,u_m)\in\R^{m-1}\times\R$ as
\begin{equation*}
{\cal H}\big(f(\ox),\lm\big)(u)=\begin{cases}
0&\mbox{if }\;f(\ox)\in(\inte Q)\cup\{0\},\\
\disp\frac{\|\lm\|}{\|f(\ox)\|}(y,-u_m)
&\mbox{if }\;f(\ox)\in(\bd\Q)\setminus\{0\}.
\end{cases}
\end{equation*}
\end{Theorem}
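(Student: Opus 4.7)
The plan is to deduce both refinements from Theorem~\ref{gd} by applying a convex subdifferential chain rule to the inner term $\sub_w\big(\sm\d^2\dd_\Th(f(\ox),\lambda)(\nabla f(\ox)\cdot)\big)(w)$, and then to compute the resulting graphical derivative $DN_\Th(f(\ox),\lambda)$ from the explicit forms of the second subderivative available in each structural case. Recall that by Theorem~\ref{pri92} together with the fact (used in the proof of Theorem~\ref{gd}) that $\d^2\dd_\Th(f(\ox),\lambda)$ is proper, l.s.c., and convex under parabolic regularity of $\Th$, and that Theorem~\ref{proto}(ii) yields $DN_\Th(f(\ox),\lambda) = \sub\big(\sm\d^2\dd_\Th(f(\ox),\lambda)\big)$. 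So if the chain rule
\begin{equation*}
\sub_w\Big(\sm\d^2\dd_\Th(f(\ox),\lambda)(\nabla f(\ox)\cdot)\Big)(w) = \nabla f(\ox)^*\,\sub\Big(\sm\d^2\dd_\Th(f(\ox),\lambda)\Big)\big(\nabla f(\ox)w\big)
\end{equation*}
holds without a qualification condition, substituting into the formula of Theorem~\ref{gd} and using the union over all of $\Lambda(\ox,\ov,w)$ (not just its intersection with $\kappa\|\ov\|\B$) immediately produces \eqref{gd1}.

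For polyhedral $\Th$, Example~\ref{poly} and equation \eqref{sdpo} give $\d^2\dd_\Th(f(\ox),\lambda)=\dd_{K_\Th(f(\ox),\lambda)}$, which is the indicator of a polyhedral convex cone. The classical polyhedral chain rule of convex analysis then applies with no qualification, and $\sub(\sm\dd_C)=N_C$ for any convex cone $C$ yields \eqref{gd2} at once. For $\Th=\Q$, I would split the argument along the three cases of Example~\ref{ice}. If $f(\ox)\in\inte\Q$ then $\lambda=0$ and $\d^2\dd_\Q(f(\ox),0)=\dd_{K_\Q(f(\ox),0)}$, so the polyhedral argument reapplies and \eqref{gd3} holds with ${\cal H}\equiv 0$. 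If $f(\ox)=0$ then $\d^2\dd_\Q(0,\lambda)=\dd_{K_\Q(0,\lambda)}$, and the same reasoning again gives \eqref{gd3} with ${\cal H}\equiv 0$. The delicate case is $f(\ox)\in(\bd\Q)\setminus\{0\}$, where Example~\ref{ice} provides the explicit decomposition
\begin{equation*}
\d^2\dd_\Q(f(\ox),\lambda)(u)=\frac{\|\lambda\|}{\|f(\ox)\|}\big(\|y'\|^2-u_m^2\big)+\dd_{K_\Q(f(\ox),\lambda)}(u),\quad u=(y',u_m)\in\R^{m-1}\times\R.
\end{equation*}
Since the quadratic term is $\mathcal{C}^1$-smooth with gradient $\tfrac{\|\lambda\|}{\|f(\ox)\|}(y',-u_m)$, the convex sum rule applies without qualification; moreover, $K_\Q(f(\ox),\lambda)$ is the intersection of a half-space with a hyperplane and is thus polyhedral, so the chain rule against $\nabla f(\ox)$ passes through the indicator piece. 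Identifying the smooth gradient with the prescribed map ${\cal H}(f(\ox),\lambda)$ completes \eqref{gd3}.

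The main obstacle will be twofold. First, one must legitimize the convex chain rule for $\d^2\dd_\Th(f(\ox),\lambda)\circ\nabla f(\ox)$ without any interiority-type qualification; this is handled by invoking polyhedrality of the indicator part of $\d^2\dd_\Th(f(\ox),\lambda)$ in every case and by exploiting the smoothness of the quadratic piece in the boundary case for $\Q$. Second, one needs to verify cleanly that the union over $\Lambda(\ox,\ov,w)\cap\kappa\|\ov\|\B$ in Theorem~\ref{gd} can be enlarged to all of $\Lambda(\ox,\ov,w)$ and that no convex-hull wrapping is needed; this is a rerun of the concavity-plus-affineness argument that was used at the end of the proof of Theorem~\ref{gd}, applied to the enlarged family of multipliers here. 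The remaining computations to put ${\cal H}(f(\ox),\lambda)$ into the stated closed form, and to check the degenerate cases $f(\ox)\in\inte\Q$ and $f(\ox)=0$ against \eqref{gd3}, are routine once these two points are secured.
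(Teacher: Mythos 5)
Your treatment of the polyhedral case and of the boundary case $f(\ox)\in(\bd\Q)\setminus\{0\}$ matches the paper's argument: reduce to the explicit second subderivative formulas from Example~\ref{poly} and Example~\ref{ice}, use $\sub\big(\sm\dd_C\big)=N_C$ together with \eqref{gdpd}, pass $\nabla f(\ox)$ through the subdifferential via the polyhedral chain rule of \cite[Theorem~23.9]{r70} (no qualification needed), and handle the smooth quadratic piece by the sum rule. The enlargement of the multiplier set from $\Lambda(\ox,\ov,w)\cap(\kappa\|\ov\|\B)$ to all of $\Lambda(\ox,\ov,w)$ is already recorded as the second formula in Theorem~\ref{gd}, so no new argument is required there.

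The genuine gap is in the vertex case $f(\ox)=0$ for $\Th=\Q$, which you dispose of by asserting that ``the same reasoning again'' applies. Here $K_\Q(0,\lm)=\Q\cap\{\lm\}^\perp$, and this set need \emph{not} be polyhedral: for $\lm=0$ (which can belong to $\Lambda(\ox,\ov,w)$ when $\ov=0$) it is the whole nonpolyhedral cone $\Q$. Consequently the chain rule
\begin{equation*}
\sub_w\Big(\sm\d^2\dd_\Q\big(0,\lm\big)\big(\nabla f(\ox)\cdot\big)\Big)(w)=\nabla f(\ox)^*N_{K_\Q(0,\lm)}\big(\nabla f(\ox)w\big)
\end{equation*}
cannot be obtained from polyhedrality, and no interiority-type qualification for this composition is available under the standing assumption of MSCQ only. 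This is precisely where the paper invokes the dedicated second-order-cone result \cite[Theorem~4.4]{hms}, which yields
\begin{equation*}
N_{K_\O(\ox,\ov)}(w)=\nabla f(\ox)^*\big[T_{N_\Q(f(\ox))}(\lm)\cap\{\nabla f(\ox)w\}^\perp\big]=\nabla f(\ox)^*N_{K_\Q(f(\ox),\lm)}\big(\nabla f(\ox)w\big)
\end{equation*}
under MSCQ. Your proof needs this identity (or an equivalent substitute) to close the vertex case; as written, that step fails.
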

\begin{proof} Consider first the case where $\Th$ is a polyhedral set. Then we get from \eqref{sdpo} that
\begin{equation*}
\d^2\dd_\Th\big(f(\ox),\lambda\big)(w)=\dd_{K_\Th(f(\ox),\lm)}(w)\;\mbox{ for all }\;w\in\R^m.
\end{equation*}
This together with \eqref{gdpd} justifies \eqref{gd2}. Employing again \eqref{gdpd} with $w\in K_\Th(\ox,\ov)$ gives us
\begin{equation*}
DN_\Th\big(f(\ox),\lambda\big)\big(\nabla f(\ox)w\big)=\sub\Big(\sm\d^2\dd_\Th\big(f(\ox),\lambda\big)\Big)\big(\nabla f(\ox)w\big)=N_{K_\Th(f(\ox),\lm)}\big(\nabla f(\ox)w\big).
\end{equation*}
Since $K_\Th(f(\ox),\lm)$ is polyhedral convex set, we employ for the mapping $w\mapsto\sm\d^2\dd_\Th(f(\ox),\lambda)(\nabla f(\ox)w)$ the subdifferential chain rule from \cite[Theorem~23.9]{r70} in the case of polyhedrality, which yields
\begin{eqnarray*}
\sub_w\Big(\sm\d^2\dd_\Th\big(f(\ox),\lambda\big)\big(\nabla f(\ox)\cdot\big)\Big)(w)&=&\nabla f(\ox)^*\sub\Big(\sm\d^2\dd_\Th\big(f(\ox),\lambda\big)\Big)\big(\nabla f(\ox)w\big)\\
&=&\nabla f(\ox)^*DN_\Th\big(f(\ox),\lambda\big)\big(\nabla f(\ox)w\big).
\end{eqnarray*}
This verifies the subgradient graphical derivative formula \eqref{gd1} in the case.

Consider now the case where $\Th$ in \eqref{CS} is the second-order cone $\Q$. Using Example~\ref{ice} and \eqref{gdpd} proves \eqref{gd3}. To verify \eqref{gd1}, we split our arguments into two settings. If $f(\ox)\in\Q\setminus\{0\}$, then it follows from our discussions in Example~\ref{ice} that $K_\Th(f(\ox),\lm)$ is a polyhedral convex set, and thus we can employ the polyhedral arguments as above with the usage of the formula for the second subderivative of $\dd_\Q$ given in Example~\ref{ice}. It remains to consider the most interesting setting where $f(\ox)=0$. It follows in this case from Example~\ref{ice} that
\begin{equation*}
\d^2\dd_\Q\big(f(\ox),\lambda\big)\big(\nabla f(\ox)w\big)=\dd_{K_\Q(f(\ox),\lm)}\big(\nabla f(\ox)w\big)=\dd_{K_\O(\ox,\ov)}(w)\;\mbox{ for all }\;w\in\R^n.
\end{equation*}
Combining this along with \cite[Theorem~4.4]{hms} leads us the equalities
\begin{eqnarray*}
\sub_w\Big(\sm\d^2\dd_\Q\big(f(\ox),\lambda\big)\big(\nabla f(\ox)\cdot\big)\Big)(w)&=&N_{K_\O(\ox,\ov)}(w)\\
&=&\nabla f(\ox)^*\big(T_{N_\Q(f(\ox))}(\lm)\cap\{\nabla f(\ox)w\}^\perp\big)\\
&=&\nabla f(\ox)^*\big[\cl\big(N_\Q(f(\ox))+\R\lm\big)\cap\{\nabla f(\ox)w\}^\perp\big]\\
&=&\nabla f(\ox)^*\big[\big(K_\Q(f(\ox),\lm)\big)^*\cap\{\nabla f(\ox)w\}^\perp\big]\\
&=&\nabla f(\ox)^*N_{K_\Q(f(\ox),\lm)}\big(\nabla f(\ox)w\big)\\
&=&\nabla f(\ox)^*\sub\Big(\sm\d^2\dd_\Q\big(f(\ox),\lambda\big)\Big)\big(\nabla f(\ox)w\big)\\
&=&\nabla f(\ox)^*DN_\Q\big(f(\ox),\lambda\big)\big(\nabla f(\ox)w\big),
\end{eqnarray*}
which verify the claimed formula \eqref{gd1} for $\O=\Q$ and thus complete the proof.
\end{proof}

We conclude this section with the following discussions on the subgradient graphical derivative calculations obtained in Theorems~\ref{gd} and \ref{police}.

\begin{Remark}[\bf discussions on the subgradient graphical derivatives]\label{gdc}{\rm Let us begin with a brief overview of previous major attempts to calculate of the subgradient graphical derivatives for the constraint systems of type \eqref{CS}.

{\bf(i)} The systematic study of subgradient graphical derivatives was started by Poliquin and Rockafellar \cite{pr93} whose results contain the calculation of the graphical derivative of \eqref{CS}, where $\Th$ is a polyhedral convex  set under the validity of the metric regularity constraint qualification. New attempts to calculate the subgradient graphical derivative of constraint systems under MSCQ \eqref{mscq} were initiated by Gfrerer and Outrata for (polyhedral) problems of nonlinear programming. Nonpolyhedral constraint systems under MSCQ were first comprehensively investigated in \cite{hms} for the case of second-order cone programming, and then the computation formulas for $DN_\O$ were extended in \cite{gm17} to more general ${\cal C}^2$-cone reducible parametric constraint systems.

{\bf(ii)} Observe that all the recent results to calculate the second-order construction $DN_\O$ for some classes of constraint systems \eqref{CS} utilize  devices that are different from the original one in \cite{pr93}. In this paper we extend the approach of \cite{pr93} to a broad class of parabolically regular constraint systems that surely encompasses ${\cal C}^2$-cone reducible ones. In this way we exploit in the proof of Theorem~\ref{gd} an advanced result established quite recently by Correa, Hantoute and L\'opez \cite{chl}, which gives us a nice formula for the calculation of subgradients for suprema of parametric families of convex functions under fairly mild assumptions.

{\bf(iii)} Finally, let us show that for ${\cal C}^2$-cone reducible constraint systems, Theorem~\ref{gd} can be justified in a much simpler way using a first-order subdifferential formula given in \cite[Theorem~10.31]{rw}. Assuming in the framework of Theorem~\ref{gd} that $\Th$ is ${\cal C}^2$-cone reducible at $f(\ox)$ in the sense of \eqref{red}, we claim that
\begin{eqnarray*}
DN_\O(\bar x,\bar v)(w)&=&\Big(\bigcup_{\lm\in\Lambda(\bar x,\bar v,w)\cap(\kappa\|\ov\|\B)}\nabla^2\la\lm,f\ra(\bar x)w+\nabla f(\ox)^*\nabla^2\la\mu,h\ra(\oz) \nabla f(\ox)w\Big)\\\\
&&+N_{\ss{K_\O(\bar x,\bar v)}}(w)\\\\
&=&\Big(\bigcup_{\lm\in\Lambda(\bar x,\bar v,w)}\nabla^2\la\lm,f\ra(\bar x)w+\nabla f(\ox)^*\nabla^2\la\mu,h\ra(\oz)\nabla f(\ox)w\Big)+N_{\ss{K_\O(\bar x,\bar v)}}(w),
\end{eqnarray*}
where $\mu$ is the unique solution to \eqref{mubar} with $\oz:=f(\ox)$. The second representation claimed above resembles the one obtained in \cite{gm17} and is equivalent to the formula given in Theorem~\ref{gd}.

To verify the claimed formulas, pick $r\in\R$ such that $r\ge \kappa\|\ov\|$. Combining \eqref{epi4} and Theorem~\ref{twi20} we get the equalities
\begin{equation*}
\begin{array}{lll}
\d^2\dd_\O(\ox,\ov)(w)\\=\disp\max_{\lm\in\Lambda(\ox,\ov)\cap\,r\B}\;\big\{\big\langle\lm,\nabla^2f(\bar x)(w,w)\big\rangle+\big\la\mu,\nabla^2 h(\oz)(\nabla f(\ox)w,\nabla f(\ox)w)\big\ra+\dd_{\ss{K_\Th(f(\ox),\lm)}}\big(\nabla f(\ox)w\big)\big\}\\
=\disp\max_{\lm\in\Lambda(\ox,\ov)\cap\,r\B}\;\big\{\big\langle\lm,\nabla^2f(\bar x)(w,w)\big\rangle+\big\la\mu,\nabla^2 h(\oz)\big(\nabla f(\ox)w,\nabla f(\ox)w)\big\ra+\dd_{\ss{K_\O(\bar x,\bar v)}}(w)\big\}\\
=\disp\max_{\lm\in\Lambda(\ox,\ov)\cap\,r\B}\;\big\{\big\langle\lm,\nabla^2f(\bar x)(w,w)\big\rangle+\big\la\mu,\nabla^2h(\oz)(\nabla f(\ox)w,\nabla F(\ox)w)\big\ra\big\}+\dd_{\ss{K_\O(\bar x,\bar v)}}(w).
\end{array}
\end{equation*}
Employing this together with \eqref{gdpd}  tells us that
\begin{eqnarray*}
DN_\O(\bar x,\bar v)(w)&=&\sub\Big(\sm\d^2\dd_\O(\ox,\ov)\Big)(w)\\
&=&\sm\sub_w\left(\max_{\lm\in\Lambda(\ox,\ov)\cap\,r\B}\;\big\{\big\langle\lm,\nabla^2f(\bar x)(w,w)\big\rangle+\big\la\mu,\nabla^2h(\bar z)\big(\nabla f(\ox)w,\nabla f(\ox)w)\big\ra\big\}\right)\\\\
&&+N_{\ss {K_\O(\bar x,\bar v)}}(w)\\\\
&=&\co\Big\{\bigcup_{\lm\in\Lambda(\bar x,\bar v,w)\,\cap\,r\B}\nabla^2\la\lm,f\ra(\bar x)w+\nabla f(\ox)^*\nabla^2\la\mu,h\ra(\oz)\nabla F(\ox)w\Big\}\\\\
&&+N_{\ss{K_\O(\bar x,\bar v)}}(w),
\end{eqnarray*}
where the last equality comes from the well known subdifferential rule for maxima of smooth functions over compact sets; see, e.g., \cite[Theorem~10.31]{rw}.
Arguing similarly to the proof of Theorem~\ref{gd} allows us to drop the convex hull in the above formula. This implies that for every real number $r$ with $r\ge\kappa\|\ov\|$ and every $w\in\R^n$ we get
\begin{equation*}
DN_\O(\ox,\ov)(w)=\Big(\bigcup_{\lm\in\Lambda(\bar x,\bar v*,w)\,\cap\,r\B}\nabla^2\la\lm,f\ra(\bar x)w+\nabla f(\ox)^*\nabla^2\la\mu,h\ra(\oz)\nabla f(\ox)w\Big)+N_{\ss{K_\O(\bar x,\bar v)}}(w).
\end{equation*}
Letting $r:=\kappa\|\ov\|$ gives us the first claimed formula for $DN_\O$. Taking further the union over all $r$ with $r\ge\kappa\|\ov\|$ brings us to the second one and thus completes the proof of the above representations of $DN_\O$ for the case of ${\cal C}^2$-cone reducible constraint systems.}
\end{Remark}

\section{Concluding Remarks}\sce\label{conc}

This paper develops a comprehensive theory of parabolic regularity for sets in geometric variational analysis with novel applications to optimization, well-posedness, and related topics. We show that parabolically regular sets encompass large classes of sets previously used in second-order variational analysis and enjoy nice properties, which are preserved under various operations on sets. Furthermore, we demonstrate that parabolic regularity is the key for developing extended calculus rules for major second-order generalized derivatives with obtaining precise formulas for their computation. The established calculus and computation results lead to broad applications to problems of constrained optimization with deriving in particular, no-gap second-order optimality conditions and establishing quadratic growth of augmented Lagrangians, which has been a goal for many previous efforts. The developed theory of parabolic regularity opens the gate for further applications to theoretical and algorithmic aspects of optimization, nonlinear analysis, and related areas of mathematics.\vspace*{0.1in}

{\bf Acknowledgements}. The authors are grateful to two anonymous referees for their helpful remarks and to the Managing Editor Alejandro Adem for his efficient handling the paper.

\end{document}